\documentclass[a4paper]{article}
\usepackage[top=25truemm,bottom=25truemm,left=20truemm,right=20truemm]{geometry}
\usepackage{amsmath,amsthm,amsfonts,amssymb,stmaryrd,indentfirst,tabularray,makecell,hhline,comment,mathtools}
\usepackage[mathscr]{euscript}
\usepackage[shortlabels]{enumitem}
\usepackage[labelfont=rm]{caption}
\usepackage{titlesec}
\usepackage{tikz-cd}
\usepackage{url}
\usepackage[backref]{hyperref}
\hypersetup{
    colorlinks = true,
    linkcolor = [rgb]{0.3,0.55,0.25},
    citecolor = [rgb]{0.164,0.39,0.77},
    urlcolor = [rgb]{0.7,0.2,0.4}
}

\DeclareMathOperator{\id}{id}
\DeclareMathOperator{\Ker}{Ker}
\DeclareMathOperator{\Aut}{Aut}
\DeclareMathOperator{\ad}{ad}

\theoremstyle{plain}
\newtheorem{theorem}{Theorem}
\newtheorem*{theorem*}{Theorem}

\newtheorem{lemma}[theorem]{Lemma}

\theoremstyle{definition}
\newtheorem{definition}[theorem]{Definition}
\newtheorem{remark}[theorem]{Remark}
\newtheorem{example}[theorem]{Example}

\numberwithin{theorem}{section}
\everymath{\displaystyle}
\allowdisplaybreaks

\titleformat{\section}{\large\scshape}{\thesection.}{3pt}{}
\titleformat{\subsection}{\scshape}{\thesubsection}{3pt}{}

\usepackage{apptools}
\titleformat{\section}{\large\scshape}{\IfAppendix{\appendixname}{} \thesection.}{3pt}{}
\titleformat{\subsection}{\scshape}{\thesubsection.}{3pt}{}

\usepackage{titling}  
\pretitle{
	\begin{center}\Large\bfseries
}
\posttitle{
	\end{center}\ \\[-20pt]
}
\preauthor{
	\begin{center}\large
}
\postauthor{
	\end{center}\ \\[-60pt]
}

\title{The Framed Version of the Universal KZB Connection\\in Higher Genera}
\author{Toyo TANIGUCHI \thanks{Graduate School of Mathematical Sciences, The University of Tokyo. 3-8-1, Komaba, Meguro-ku, Tokyo, 153-8914, Japan. E-mail: \texttt{toyo(at)ms.u-tokyo.ac.jp}}}
\date{}

\begin{document}
\maketitle

\centerline{\textit{With Appendix by Benjamin Enriquez and Toyo Taniguchi}}

\begin{abstract}
\noindent In this paper, the universal KZB connection on the configuration space of points on a closed Riemann surface of an arbitrary genus, introduced by Enriquez, is lifted to the configuration space of points with tangent vectors. This lifted connection is then used to prove the existence of a higher genus version of a Drinfeld associator in the sense of Gonzalez.\\
\end{abstract}

\noindent{\textit{MSC2020: 53C10 (primary); 16T25, 16W70, 18M75, 20F36, 20F40, 57K20, 57M05.}\\
\noindent{\textbf{Keywords:} Drinfeld associators, the KZB connection, configuration spaces, operads, surface braids.}

\section{Introduction}

When Drinfeld associators were introduced in his seminal paper \cite{drinfeld}, its existence was proved using the universal Knizhnik--Zamolodchikov (KZ) connection, which is a particular flat connection on the configuration space $\mathrm{Conf}_n(\mathbb{C})$ of $n\geq 1$ points on the complex plane $\mathbb{C}$. This connection takes its value in the Drinfeld--Kohno Lie algebra $\mathfrak{t}_n$, which is a graded Lie algebra associated with the pure braid group $\mathrm{PB}_n = \pi_1(\mathrm{Conf}_n(\mathbb{C}))$. The original (non-universal) KZ connection in \cite{kz} was formulated in the context of conformal field theory, and it was generalised to higher genera by Bernard \cite{bernard1, bernard2} and Tsuchiya--Ueno--Yamada \cite{tuy}.

One notable example of a Drinfeld asssociator, called the KZ associator, is obtained as the monodromy of the KZ connection along a certain path in $\mathrm{Conf}_3(\mathbb{C})$, and its key property is the compatibility with \textit{operadic} structure of $\mathrm{Conf}_n(\mathbb{C})$: in terms of pure braids, the associated monodromy behaves naturally with the doubling of strands in $\mathrm{PB}_n$. This operadic nature led to the reformulation of Drinfeld associators by Bar-Natan \cite{barnatan}.

By replacing the complex plane $\mathbb{C}$ with an orientable closed surface $C$, say, of genus $g$, Drinfeld associators are generalised to an arbitrary genus, which was studied by Gonzalez \cite{gonzalez}. We thusly call this version of associators \textit{genus} $g$ \textit{Gonzalez--Drinfeld associators}. One important point, however, is that we have to consider the \textit{framed} version $\mathrm{Conf}^\mathrm{fr}_n(C)$ of the configuration space to recover its operadic structure. More presicely, $\mathrm{Conf}^\mathrm{fr}_n(C)$ is defined as the pull-back bundle fitting in the diagram
\[\begin{tikzcd}[cramped]
	\mathrm{Conf}^\mathrm{fr}_n(C) \arrow[r]\arrow[d] &(TC \setminus 0_{TC})^n \arrow[d]\\
	\mathrm{Conf}_n(C) \arrow[r] & C^n
\end{tikzcd}\]
where $TC$ is the tangent bundle of $C$, and $0_{TC}$ is its zero-section. This additional data of framing enables us to do various doubling operations just as the case of the complex plane $\mathbb{C}$. In the case of $g=1$, a slightly different version had already been introduced by Enriquez \cite{elliptic} under the name of \textit{elliptic associators}, and is equivalent to a genus $1$ Gonzalez--Drinfeld associator (although we could not find the proof in the literature; the proof will be given in a sequel). We note that another generalisation based on rational models of $\mathrm{Conf}^\mathrm{fr}_n(C)$, in contrast to Gonzalez' framework of groupoid models, was introduced and shown to exist by Campos--Idrissi--Willwacher \cite{ciw}. Whether they agree is still an open question to the best of author's knowledge.

The existence of Gonzalez--Drinfeld associators, on the other hand, is the main problem we discuss in this paper. For $g=1$, the existence is also shown in \cite{gonzalez}, which is based on the elliptic version of the universal KZ connection, the \textit{universal Knizhnik--Zamolodchikov--Bernard (KZB) connection} constructed by Calaque--Enriquez--Etingof \cite{cee}, while for $g\geq 2$, it is only stated as Conjecture 3.21 in \cite{gonzalez}. However, this KZB connection is further generalised for $\mathrm{Conf}_n(C)$ with the genus of $C$ arbitrary, denoted by $\alpha_\mathrm{KZ}^{(n)}$, in the work of Enriquez \cite{enriquez}, given the data of a \textit{marking} on $C$, namely a choice of a base point $*\in C$ and a set of certain generators of the fundamental group $\pi_1(C,*)$. Therefore, the last step for the construction of a genus $g$ Gonzalez--Drinfeld associator, by means of KZB-type connections, is to lift these connection to the framed configuration space such that the compatibility with the operadic structure on $\mathrm{Conf}^\mathrm{fr}_n(C)$ is satisfied. Indeed, we have the following:

\begin{theorem*}[Theorems \ref{thm:kzb}, \ref{thm:operadic}]
For each marked closed Riemann surface $C$, there exists a unique \textit{operadic} family of connections $\{\vec{\alpha}_\mathrm{KZ}^{(n)}\}_{n\geq 0}$ such that $\vec{\alpha}_\mathrm{KZ}^{(n)}$ is a flat connection on $\mathrm{Conf}^\mathrm{fr}_n(C)$ lifting Enriquez' connection $\alpha_\mathrm{KZ}^{(n)}$ along the projection $\mathrm{Conf}^\mathrm{fr}_n(C) \to \mathrm{Conf}_n(C)$.
\end{theorem*}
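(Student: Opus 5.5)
We will build $\vec{\alpha}_\mathrm{KZ}^{(n)}$ explicitly and then argue that the operadic requirement pins it down. The target Lie algebra for $\mathrm{Conf}^\mathrm{fr}_n(C)$ is the framed extension of Enriquez' algebra $\mathfrak{t}_{g,n}$: add central-type generators $t_{ii}$, one per point, recording the rotation of the $i$-th tangent vector, and impose relations under which the loop turning the framing at the $i$-th point, the $S^1$-fibre of $TC\setminus 0_{TC}\to C$, corresponds to $t_{ii}$. These relations should be the graded analogues of the surface-group relation for $\mathrm{PB}^\mathrm{fr}_n(C)$; in the fibre direction the $(2-2g)$-twisting of $TC$ reappears as $\sum_j t_{ij}$ plus a term $(2-2g)t_{ii}$ modified by the framing. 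We will choose a nowhere-vanishing meromorphic vector field, equivalently a $1$-form $\omega$ with divisor $D$, and use it to trivialise $TC\setminus 0_{TC}$ away from $D$. Writing a tangent vector at $z_i$ as $v_i=\lambda_i\,\partial/\partial z$ in this trivialisation, the candidate connection is
\[
\vec{\alpha}_\mathrm{KZ}^{(n)} \;=\; \alpha_\mathrm{KZ}^{(n)} \;+\; \sum_i t_{ii}\Bigl(\frac{d\lambda_i}{\lambda_i} + \eta(z_i)\Bigr),
\]
where $\eta$ is a correction $1$-form, valued in the degree-one part, that cancels the poles of $d\log\lambda_i$ along $D$. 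The natural candidate for $\eta$ is the regularised diagonal value of Enriquez' kernel, i.e.\ the constant term of the $t_{ij}$-coefficient as $z_j\to z_i$, computed using the coordinate defined by $v_i$.

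The steps run in the following order. First, we will check that $\vec{\alpha}$ is well defined, meaning its independence of the trivialisation and the absence of singularities along $D$, by examining the transition $\lambda\mapsto\lambda\cdot f$. The point is that the regularised constant term transforms by $d\log f$. Second, we will prove flatness. Since $\alpha_\mathrm{KZ}^{(n)}$ is already flat and $t_{ii}$ commutes with the relevant elements, $d\vec{\alpha}+\vec{\alpha}\wedge\vec{\alpha}$ reduces to $\sum_i t_{ii}\,d\eta(z_i)$ plus brackets $[t_{ii},\alpha]$. Both are controlled by the residue of $\alpha_\mathrm{KZ}$ along the diagonals $z_i=z_j$, and by the defining relation $\sum_j t_{ij}+\dots=0$ in the algebra.

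Third, we will establish the operadic compatibility. For each insertion map $\circ_i:\mathrm{Conf}^\mathrm{fr}_n(C)\times\mathrm{Conf}^\mathrm{fr}_m(\mathbb{C})\to$ (a neighbourhood of the boundary of) $\mathrm{Conf}^\mathrm{fr}_{n+m-1}(C)$, we will show that the asymptotic expansion of $\vec{\alpha}^{(n+m-1)}$ near the collision stratum equals $\vec{\alpha}^{(n)}$, pulled back via the doubling morphism $\mathfrak{t}^\mathrm{fr}_{g,n}\to\mathfrak{t}^\mathrm{fr}_{g,n+m-1}$, plus the framed KZ connection on $\mathrm{Conf}^\mathrm{fr}_m(\mathbb{C})$. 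The framing enters precisely because $\eta$ is the regularised diagonal term: the collision coordinate is measured in units of $v_i$. Uniqueness will come from the fact that any two lifts differ by $\sum_i t_{ii}\beta_i$ with $\beta_i$ closed and pulled back from $\mathrm{Conf}_n(C)$. Operadicity for $n=1$ and $n=0$, together with the $\mathfrak{S}_n$-equivariance built into the definition, forces each $\beta_i$ to vanish. For this we use that $H^0$ of holomorphic $1$-forms of the appropriate type is killed by the normalisation, via the marking ($A$-periods).

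We expect the main obstacle to be the operadic asymptotics, namely identifying the regularised limit of Enriquez' kernel precisely and showing that its dependence on the marking yields exactly the doubling morphism without extra terms. To get there, we will first treat $m=2$ and expand Enriquez' $\omega(z_j-z_i)$ in terms of $\lambda_i$, then induct on $m$ using the $\mathbb{C}$-case, where the statement is known (framed KZ).
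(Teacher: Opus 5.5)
\textbf{There is a genuine gap at the construction step, for every $g\neq 1$.} Your correction $\frac{d\lambda_i}{\lambda_i}+\eta(z_i)$ requires a meromorphic $1$-form $\eta$ on $C$ whose poles cancel those of $d\log\lambda$ along $D$. Those residues are the multiplicities of $\omega$, and they sum to $\deg K_C=2g-2$. So $\eta$ would have poles only on $D$ with total residue $-(2g-2)\neq 0$, which contradicts the residue theorem. Equivalently, $TC\setminus 0_{TC}$ carries no flat holomorphic connection form because $\deg TC\neq 0$. If you allow $\eta$ to be merely smooth, Stokes gives $\int_C d\eta\neq 0$, so your curvature term $\sum_i t_{ii}\,d\eta(z_i)$ does not vanish.

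This obstruction can only be absorbed through the non-split relation $\sum_a[x_i^a,y_i^a]+\sum_{j\neq i}t_{ij}=(g-1)t_{ii}$. That is also why your ``$\alpha_\mathrm{KZ}^{(n)}$'' has no canonical meaning as a $\mathfrak{t}^f_{g,n}$-valued form: Enriquez' independence of the auxiliary point $q$ rests on exactly the relation that has been deformed. The missing idea is the paper's:
\begin{enumerate}[(i)]
\item keep the naive lift $\beta_i$, whose derivative in the auxiliary point is $(1-g)\psi^i t_{ii}$ and which has a central pole of residue $\frac{(1-g)t_{ii}}{2\pi\sqrt{-1}}$ on $p_i=q$;
\item pair it with a \emph{two-variable} form $\varphi(\vec p,q)$ (Lemma~\ref{lem:phi}) having residue $\frac{2g-2}{2\pi\sqrt{-1}}$ on $p=q$, $q$-derivative $(2g-2)\psi$, and vanishing periods on $\vec{\mathcal{A}}_a$.
\end{enumerate}
Only the sum $\beta_i+\varphi^i t_{ii}/2$ is $q$-independent and regular. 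Your regularised-diagonal heuristic actually sees this: the degenerate $t_{n,n+1}$-terms produce $\frac{d\lambda_n}{\lambda_n}-\frac{2\,dz_n}{z_n-w}$, with $w$ the auxiliary point; cf.\ \eqref{eq:dlambda}.

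Likewise, flatness of a lift to a central extension is not inherited from flatness of $\alpha_\mathrm{KZ}^{(n)}$. The curvature of the lift is a $\bigoplus_i\mathbb{C}t_{ii}$-valued $2$-form that must be shown to vanish. The paper kills it via $d^{(i)}\beta_j=d^{(j)}\beta_i$, $d^{(1)}\varphi=0$, and a degree count: with $\deg x=0$ and $\deg y=\deg t=1$, $[\vec\alpha,\vec\alpha]$ sits in degree $2$ while $\bigoplus_i\mathbb{C}t_{ii}$ sits in degree $1$.

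\textbf{Your uniqueness argument rests on the wrong mechanism.} The operadic condition says nothing about $A$-periods of a competing lift, and the marking plays no role in uniqueness; the paper notes that an operadic lift of \emph{any} family is unique if it exists. Nor is there a reason for the difference to be pulled back from $\mathrm{Conf}_n(C)$. What actually kills the difference is the doubling formula $t_{nn}\mapsto t_{nn}+2t_{n,n+1}+t_{n+1,n+1}$. Write the difference as $A^{(n)}=\sum_i A^{(n)}_i t_{ii}$. Then $\lim_{\varepsilon\to 0}f_\varepsilon^*(A^{(n+1)})$ is still $\bigoplus_i\mathbb{C}t_{ii}$-valued. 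By contrast, $(A^{(n)})^{1,\dotsc,n-1,n(n+1)}$ contains the extra term $2A^{(n)}_n t_{n,n+1}$, which does not lie in $\bigoplus_i\mathbb{C}t_{ii}$. Hence $A^{(n)}_n=0$, and $\mathrm{S}_n$-symmetry finishes.

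The $A$-period normalisation does matter, but elsewhere: in the operadicity step you flag as the main obstacle. There the paper identifies $\lim f_\varepsilon^*(\beta_n^{(n+1)}+\beta_{n+1}^{(n+1)})$ with $(\beta_n^{(n)})^{1,\dotsc,n-1,n(n+1)}+\varphi^n t_{n,n+1}$ by a uniqueness argument (automorphy, residues, $q$-derivative, periods on $\vec{\mathcal{A}}_a$). It does not use a term-by-term expansion, which cross terms such as $[x_{n+1}^a,y_n^a]=t_{n,n+1}$ make unwieldy.
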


\noindent This gives an affirmative answer to Gonzalez' Conjecture 3.21 in \cite{gonzalez}. As a result, we obtain a \textit{genus} $g$ \textit{KZB associator}:
\begin{theorem*}[Theorems \ref{thm:kzbassoc}]
For each marked closed Riemann surface $C$, the monodromy morphism associated with $\{\vec{\alpha}_\mathrm{KZ}^{(n)}\}_{n\geq 0}$  gives rise to a genus $g$ Gonzalez--Drinfeld associator.
\end{theorem*}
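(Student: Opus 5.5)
The proof will follow the standard "monodromy of an operadic flat connection gives an associator" argument, adapted from Drinfeld's KZ case and Calaque--Enriquez--Etingof's genus one case, using Gonzalez' groupoid formulation.

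\textbf{Step 1: target of the monodromy.} By Theorems \ref{thm:kzb} and \ref{thm:operadic}, the lifted connections $\vec{\alpha}_\mathrm{KZ}^{(n)}$ are flat on $\mathrm{Conf}^\mathrm{fr}_n(C)$ and form an operadic family. Each one takes values in the completed graded Lie algebra $\hat{\mathfrak{t}}^{\mathrm{fr}}_{g,n}$, the framed higher genus Drinfeld--Kohno algebra. Gonzalez' associators are morphisms of (operad-)modules over parenthesized braids. These go from a groupoid model of $\mathrm{Conf}^\mathrm{fr}_n(C)$, based at tangential base points given by parenthesized configurations, to $\exp\hat{\mathfrak{t}}^{\mathrm{fr}}_{g,n}\rtimes S_n$. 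These are the objects the monodromy must produce.

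\textbf{Step 2: regularised parallel transport.} Fix the marking base point $*$. For each parenthesization, take the tangential base point at the corresponding boundary stratum of the Fulton--MacPherson-type compactification. Define the parallel transport of $\vec{\alpha}_\mathrm{KZ}^{(n)}$ between such base points by asymptotic renormalisation, i.e.\ by removing the leading singular behaviour $z^{t}$ near the collision divisors, as for the KZ associator. Flatness makes transport depend only on homotopy classes, so it defines a functor $\Pi_1(\mathrm{Conf}^\mathrm{fr}_n(C),\text{tangential pts})\to \exp\hat{\mathfrak{t}}^{\mathrm{fr}}_{g,n}\rtimes S_n$. The $S_n$-equivariance of the family makes this functor equivariant.

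\textbf{Step 3: compatibility with operadic structure.} Operadic insertion of framed little disks into $\mathrm{Conf}^\mathrm{fr}_n(C)$ matches the coproduct/insertion maps on $\mathfrak{t}^{\mathrm{fr}}_{g,\bullet}$. This uses the fact that the connections are operadic in the sense of Theorem \ref{thm:operadic}, i.e.\ the pullback along insertion near a boundary stratum equals the image of $\vec{\alpha}_\mathrm{KZ}^{(n)}$ under the insertion morphism, up to a term that is regularised away. The monodromy then intertwines the module structure over the genus zero (framed) operad. For that operad, the KZ connection gives the KZ associator, so the genus zero part is a Drinfeld associator.

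\textbf{Step 4: group-like and grading conditions.} The degree-one part is the required normalisation because the leading term of the connection is the tautological one. This is the associated graded identification of Enriquez' connection, which the lift preserves.

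I expect Step 3 to be the main obstacle. The hard part is showing that renormalised transport near collision and framing strata is genuinely multiplicative under insertion, i.e.\ that the residues of $\vec{\alpha}_\mathrm{KZ}^{(n)}$ along boundary divisors are exactly the images of the framed genus zero generators including the framing terms $t_{ii}$. I would first check this on the rank two strata by a local expansion of the Enriquez kernels, and then induct on $n$ using the operadic uniqueness from Theorem \ref{thm:operadic}.
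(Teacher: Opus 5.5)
Your Steps 1--3 have the right overall shape. The main gap is that the proposal never proves what makes the monodromy an associator and not just a morphism.

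\textbf{The isomorphism on completions is not proved.} By the definition used here, a Gonzalez--Drinfeld associator is an \emph{isomorphism} $\hat Z\colon\widehat{\mathbf{PaB}}{}^f_\Sigma\to\mathbb{G}\mathbf{PaCD}^f_g$ of completed operad modules. So you must show that $\widehat{\mathrm{PB}}^f_{g,n}\to\exp(\hat{\mathfrak{t}}^f_{g,n})$ is bijective. Your Step 4 (``the degree-one part is the required normalisation'') does not do this. It also does not follow from Enriquez' non-framed result without extra work on the framing part: for $g=1$ the $t_{ii}$ are new degree-one generators, while for $g\geq 2$ they sit in degree two.

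The paper argues on associated graded Lie algebras. Surjectivity comes from the degree-one parts of the monodromy: $X^i_a$ gives $y_i^a$, and $Y^i_a$ gives $x_i^a+\sum_b\tau_{ab}y_i^b$. For $g=1$ one also needs $Z(F^i)=e^{t_{ii}/2}$ for the fibre loop, read off from the residue $\frac{t_{ii}}{4\pi\sqrt{-1}}$ at $\lambda_i=0$. Injectivity needs a Lie algebra morphism $T\colon\mathfrak{t}^f_{g,n}\to\mathrm{gr}(\mathrm{PB}^f_{g,n})\otimes_{\mathbb Z}\mathbb{C}$ sending generators to the degree-one classes; building it requires a presentation of the framed surface pure braid group (Bellingeri--Gervais). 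Combined with the automorphism $\theta$ of $\mathfrak{t}^f_{g,n}$ (well defined because $\tau_{ab}=\tau_{ba}$), this gives $T\circ\theta^{-1}\circ\mathrm{gr}\,\hat Z=\id$.

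\textbf{Step 3 is missing its key ingredients.} You assume that braids in a coordinate disc are sent to $\iota(\Phi_\mathrm{KZ}(b))$. But there $\vec\alpha_\mathrm{KZ}^{(n)}=\vec\omega_\mathrm{KZ}^{(n)}+\zeta^{(n)}$ with $\zeta^{(n)}$ holomorphic and nonzero, so this needs proof. The paper builds one gauge transformation $R$, analytic at the origin, with $R(0)=1$ and $dR=\zeta^{(n)}R+[\vec\omega_\mathrm{KZ}^{(n)},R]$. Its inductive construction works only because operadicity forces $\zeta^{(n)}|_{\{z_i=z_j\}}$ to commute with $t_{ij}$. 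Since $R|_{U_s}=F_sH_s^{-1}$ for every parenthesisation $s$, one gets $Z(b)=H_{s'}^{-1}(H_s*b)=\iota(\Phi_\mathrm{KZ}(b))$.

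For doubling, the uniqueness in Theorem~\ref{thm:operadic} concerns lifts of connections and says nothing about parallel transport, so induction on it does not help. Instead, regauge $F_{\tilde s}$ by $J=(z_{n+1}-z_n)^{\frac{-t_{n,n+1}}{2\pi\sqrt{-1}}}\lambda_n^{\frac{t_{n,n+1}}{4\pi\sqrt{-1}}}\lambda_{n+1}^{\frac{t_{n,n+1}}{4\pi\sqrt{-1}}}$. This gives a solution holomorphic across $\{z_n=z_{n+1}\}$ whose pullback $f_0^*$ equals $F_s^{1,\dotsc,n-1,n(n+1)}$, by Theorem~\ref{thm:conv}. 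You must also check that the correction $J^{-1}(J*b^{1,\dotsc,n-1,n(n+1)})$ tends to $1$ at the diagonal. Finally, Theorem~\ref{thm:conv} is also what supplies the hypothesis $[C_i|_{u_j=0},c_j]=0$ that your Step~2 needs for the normalised solutions to exist.
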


The operadic nature of the framed universal KZB connection would facilitate the study of higher genus multi-zeta values; for a nice exposition with the background of various connections in higher genera, see \cite{hgmzv}. Another application was given in author's previous paper \cite{toyo}, where each genus $g$ Gonzalez--Drinfeld associator directly gives a solution to the Kashiwara--Vergne equations in genus $g$ in the sense of Alekseev--Kawazumi--Kuno--Naef \cite{akkn} in the context of the formality problem for the Goldman--Turaev Lie bialgebra.\\

\noindent\textbf{Organisation of the paper.} In Section \ref{sec:enriquez}, we review Enriquez' construction of a flat connection. Section \ref{sec:main} is devoted to the main construction of the framed KZB connection, whose operadicity is shown in Section \ref{sec:operadicity}. Section \ref{sec:gonzalez} deals with the construction of genus $g$ KZB associators. Appendix \ref{sec:correction} is a minor correction to the proof of Lemma 9 used in \cite{enriquez}.\\

\noindent\textbf{Acknowledgements.} The author is grateful to Benjamin Enriquez for heart-warming discussions on his work and improving many details, especially a great simplification of the proof of Lemma \ref{lem:phi}, and to Nariya Kawazumi for thorough reading and many helpful suggestions on the draft. This work was supported by JSPS KAKENHI Grant Number 25KJ0734.\\

\section{Enriquez' Connections} \label{sec:enriquez}

The genus $g$ version of the (non-framed) KZB connection was introduced in \cite{enriquez}. We recall some definitions and results of \cite{enriquez} in this section. First of all, we start with the genus $g$ Drinfeld--Kohno Lie algebra.

\begin{definition}
For $g\geq 0$ and a finite set $I$, the Lie algebra $\mathfrak{t}_{g,I}$ over $\mathbb{C}$ is generated by elements
\[
	\bar t_{ij}\; (i\neq j \in I)\quad\mbox{and}\quad \bar x_i^a, \bar y_i^a\; (i \in I, 1\leq a\leq g)
\]
with the relations given, for $i,j,k,l\in I$ and $1\leq a,b\leq g$, by
\begin{align*}
\begin{aligned}
	&\bar t_{ij} = \bar t_{ji},&\\
	&[\bar t_{ij}, \bar t_{kl}] = 0& &\mbox{if } \{i,j\}\cap\{k,l\}=\varnothing,\\
	&[\bar t_{ij}, \bar t_{ik} + \bar t_{jk}] = 0& &\mbox{if } \{i,j\}\cap\{k\}=\varnothing,\\
	&[\bar x_i^a, \bar y_j^b] = \delta_{ab} \bar t_{ij}& &\mbox{if } i\neq j,\\
	&[\bar x_i^a, \bar x_j^b] = [\bar y_i^a, \bar y_j^b] = 0& &\mbox{if } i\neq j,\\
	&[\bar x_k^a, \bar t_{ij}] = [\bar y_k^a, \bar t_{ij}] = 0& &\mbox{if } \{i,j\}\cap\{k\}=\varnothing,\\
	&[\bar x_i^a + \bar x_j^a, \bar t_{ij}] = [\bar y_i^a + \bar y_j^a, \bar t_{ij}] = 0
\end{aligned}
\end{align*}
and, for $i\in I$, 
\[
	\sum_{1\leq a\leq g} [\bar x_i^a, \bar y_i^a] + \sum_{j\in I\setminus\{i\}} \bar t_{ij} = 0\,.
\]
When $I = \{1,2,\dotsc, n\}$, we simply write $\mathfrak{t}_{g,n}$.
\end{definition}

\begin{definition}
Let $g\geq 0$ and $C$ a closed Riemann surface of genus $g$. For a finite set $I$, we denote the configuration space of points on $C$ labelled by $I$ by 
\[
	\mathrm{Conf}_I(C) = \{(p_i)_{i\in I} \in C^I: p_i \neq p_j \mbox{ for any }  i\neq j \in I\} =: C^I\setminus \Delta\!^{(I)}.
\]
As before, when $I = \{1,2,\dotsc, n\}$, we simply write $\mathrm{Conf}_n(C)$ and $\Delta\!^{(n)}$, respectively.
\end{definition}

\begin{definition}
Define the group $\pi_g$ by 
\[
	\pi_g = \langle A_a, B_a\, (1\leq a\leq g)| (A_1, B_1)\cdots (A_g, B_g) = 1 \rangle
\]
with the commutator $(X, Y) = XYX^{-1}Y^{-1}$. We denote by $F_g$ the free group generated by $\gamma_1,\dotsc, \gamma_g$, and put
\begin{align*}
	\sigma \colon \pi_g &\to F_g: \;A_a \mapsto 1, B_a \mapsto \gamma_a.
\end{align*}
\end{definition}

We fix $*\in C$ together with an isomorphism $\pi_1(C, *) \cong \pi_g$, to which we will refer as a \textit{marking} on $C$. Then, we obtain the covering $\varpi\colon \tilde C \to C$ associated with $\Ker(\sigma\colon \pi_g \to F_g)$ via the identification $\pi_1(C, *) \cong \pi_g$. Next, we fix $\tilde * \in \varpi^{-1}(*)$ so that $F_g$ and the group of deck transformations $\Aut(\tilde C/C)$ are identified.

The choice of $\tilde *$ gives smooth simple free loops $\mathcal{A}_a\,(1\leq a\leq g)$ on $\tilde C$ so that $\varpi(\mathcal{A}_a)$ is freely homotopic to $A_a$, and also the $F_g$-fundamental domain $\mathcal{D}$ of $\tilde C$ containing $\tilde *$ so that they satisfy
\[
	\partial \mathcal{D} = \bigcup_{1\leq a\leq g} \mathcal{A}_a \cup \gamma_a^{-1}(\mathcal{A}_a).\vspace{-5pt}
\]
An element $\gamma \in F_g$ acts on $1$-forms on $\tilde C$ by deck transformations: $\gamma (\omega) = (\gamma^{-1})^*\omega$.

\begin{definition}
We denote by $\hat{\mathfrak{t}}_{g,n}$ the pro-nilpotent completion of $\mathfrak{t}_{g,n}$. This is isomorphic to the degree-completion with respect to $\deg(\bar x_i^a) = \deg(\bar y_i^a) = 1$ and $\deg(\bar t_{ij}) = 2$.

 The flat principal $\exp(\hat{\mathfrak{t}}_{g,n})$-bundle $\mathcal{P}_n \to C^n$ is defined by
\[
	\mathcal{P}_n = \tilde C^n \times_{(F_g)^n} \exp(\hat{\mathfrak{t}}_{g,n}),
\]
where $(F_g)^n$ acts on $\tilde C^n$ by deck transformations via $(F_g)^n \cong \Aut(\tilde C/C)^n \to \Aut(\tilde C^n/C^n)$, and on $\exp(\hat{\mathfrak{t}}_{g,n})$ by
\begin{align*}
	(F_g)^n &\to \exp(\hat{\mathfrak{t}}_{g,n})\\
	\gamma_a^{(i)} &\mapsto e^{\bar x_i^a}
\end{align*}
where $\gamma_a^{(i)}$ is the $i$-th copy of $\gamma_a$ in $(F_g)^n$. We denote the associated flat connection by $\nabla_n$.
\end{definition}

Enriquez' connection $\nabla_n - \alpha^{(n)}_\mathrm{KZ}$ is constructed as a meromorphic connection on $\mathcal{P}_n$. More precisely, we will soon define an element
\[
	\alpha^{(n)}_\mathrm{KZ}\in H^0\big( C^n, \Omega^{1,0}_{C^n}\otimes (\mathcal{P}_n\times_\mathrm{ad}\hat{\mathfrak{t}}_{g,n})(\Delta\!^{(n)}) \big).
\]
which is regarded as a connection on $\tilde C$ via the natural map
\[	
	H^0\big( C^n, \Omega^{1,0}_{C^n}\otimes (\mathcal{P}_n\times_\mathrm{ad}\hat{\mathfrak{t}}_{g,n})(\Delta\!^{(n)}) \big) \to H^0\big( \tilde C^n, \Omega^{1,0}_{\tilde C^n}\otimes \hat{\mathfrak{t}}_{g,n} (\tilde \Delta\!^{(n)}) \big) 
\]
induced by the projection $\varpi^n\colon\tilde C^n \to C^n$, where we put $\tilde \Delta\!^{(n)} = (\varpi^n)^{-1}(\Delta\!^{(n)})$. To recall the definition, we shall do some preparation.

\begin{definition}\label{def:enr}
Let $\{A_a\}_{1\leq a\leq g}$ be the generators for $A$-loops of $\pi_g$ as before.
\begin{itemize}
	\item We denote by $K_C$ the canonical bundle of $C$, and put $K_C^{(i)} = \mathcal{O}_C^{\boxtimes i-1} \boxtimes K_C \boxtimes \mathcal{O}_C^{\boxtimes n-i}$, which is a sheaf on $C^n$. We define $K_{\tilde C}^{(i)}$ similarly, and denote by $d^{(i)}$ the exterior derivative applied to the $i$-th factor of $C^n$ or $\tilde C^n$.
	\item We denote by $\omega_a\in H^0(C, K_C)\, (1\leq a\leq g)$ the holomorphic $1$-forms dual to the homology classes of $\{A_a\}_{1\leq a\leq g}$.
	\item Let $\psi \in H^0\big( C^2, (K_C \boxtimes K_C) (2\Delta\!^{(2)}) \big)$ be the basic bi-differential (also called the \textit{Bergman kernel}, see p.126 of \cite{fay}, or the differential of the second kind) with the bi-residue $\frac{1}{2\pi\sqrt{-1}}$ and normalized by $\{A_a\}_{1\leq a\leq g}$, that is, for any $1\leq a\leq g$ and $q\in C\setminus A_a$,
\[
	\int_{p\in A_a} \psi(p,q) = 0.
\]
\end{itemize}
\end{definition}

If we take a local coordinate $z$ of $C$ and its copy $w$ so that $(z,w)$ is a local coordinate of $C^2$, we have
\begin{align*}
	\psi \sim (d\boxtimes d) \frac{\log(z - w)}{2\pi\sqrt{-1}} = \frac{1}{2\pi\sqrt{-1}}\frac{dz\,dw}{(z-w)^2}
\end{align*}
near the diagonal $\Delta^{(2)}$ of $C^2$, where $\sim$ denotes the equality up to holmorphic terms, and $(d\boxtimes d)$ is the induced differential on $K_C\boxtimes K_C$.

\begin{definition}
Let $s\geq 0$ and $a_1,\dotsc, a_s \in \{1,\dotsc, g\}$.
\begin{itemize}
	\item For $s\geq 1$, we put
	\[
		\delta_{a_1\dotsc a_s} = \begin{cases} 1 & a_1=\cdots =a_s \\ 0 & \textrm{otherwise}\end{cases}.
	\]
	\item The $1$-forms $\omega_{a_1\dotsc a_s} \in H^0\big( \tilde C^2, K_{\tilde C}^{(1)} (\tilde \Delta\!^{(2)}) \big)$ for $s\geq 1$ are inductively defined by
	\begin{align}\label{eq:residue}
		\gamma_a^{(1)}(\omega_{a_1\dotsc a_s}) = \sum_{0\leq k< s} \frac{1}{k!} \delta_{aa_1\dotsc a_k}\omega_{a_{k+1}\dotsc a_s},\quad \mathrm{res}_{p_1=p_2}\omega_{a_1\dotsc a_s}(p_1, p_2) = \begin{cases} -\frac{\delta_{a_1a_2}}{2\pi\sqrt{-1}} & (s=2)\\ 0 &(s\neq 2) \end{cases}
	\end{align}
	for $1\leq a\leq g$. The base case of $s=1$ is already defined in Definition \ref{def:enr}.
	\item The $1$-forms $\psi_{a_1\dotsc a_s} \in H^0\big( \tilde C^2, (K_{\tilde C}\boxtimes K_{\tilde C}) (2\tilde \Delta\!^{(2)}) \big)$ for $s\geq 0$ are defined by $\psi_{a_1\dotsc a_s} = - d^{(2)}(\omega_{a_1\dotsc a_s b b})$ for any $1\leq b\leq g$. This does not depend on the choice of $b$, and, in the case of $s=0$, it coincides with the above $\psi$.
	\item The $1$-forms $\Psi_{a_1\dotsc a_s} \in H^0\big( \tilde C^3, K_{\tilde C} ^{(1)}(\tilde\Delta\!^{(3)}) \big)$ are defined by
	\[
		\Psi_{a_1\dotsc a_s}(p, q_1, q_2) = \int_{q = q_1}^{q_2} \psi_{a_1\dotsc a_s}(p, q).
	\]
	In particular, for $s=0$, we have the expansion
	\begin{align}\label{eq:bidiff}
		\Psi \sim \frac{1}{2\pi\sqrt{-1}}\Big( \frac{dz}{z-w'} - \frac{dz}{z-w}\Big)
	\end{align}
	near the diagonal $\{z = w = w'\}$ with a local coordinate $(z,w,w')$ on $\tilde C^3$ where $w$ and $w'$ are copies of $z$.
	\item We denote projections by
	\begin{align*}
		\mathbf{p}_{i,j}&\colon \tilde C^{n+1} \to \tilde C^3\colon (p_1, \dotsc, p_n, q) \mapsto (p_i, q, p_j)\mbox{ and}\\
		\mathbf{p}_i&\colon \tilde C^{n+1} \to \tilde C^2\colon (p_1, \dotsc, p_n, q) \mapsto (p_i, q)
	\end{align*}
	for $1\leq i, j\leq n$ with $i\neq j$, and set
	\begin{align}\label{eq:omegadef}
	\begin{aligned}
		\omega_{a_1\dotsc a_s}^i &= \mathbf{p}_i^*\omega_{a_1\dotsc a_s} \in H^0\big( \tilde C^{n+1}, K_{\tilde C}^{(i)}\otimes \hat{\mathfrak{t}}_{g,n} (\tilde \Delta\!^{(n+1)}) \big),\\
		\psi_{a_1\dotsc a_s}^i &= \mathbf{p}_i^*\psi_{a_1\dotsc a_s}\in H^0\big( \tilde C^{n+1}, K_{\tilde C}^{(i)}\boxtimes K_{\tilde C}^{(j)}\otimes \hat{\mathfrak{t}}_{g,n} (2\tilde \Delta\!^{(n+1)}) \big),\mbox{ and}\\
		\Psi_{a_1\dotsc a_s}^{i,j} &= \mathbf{p}_{i,j}^*\Psi_{a_1\dotsc a_s} \in H^0\big( \tilde C^{n+1}, K_{\tilde C}^{(i)}\otimes \hat{\mathfrak{t}}_{g,n} (\tilde \Delta\!^{(n+1)}) \big).
	\end{aligned}
	\end{align}
\end{itemize}
\end{definition}

\begin{remark}
Our notation is slightly different from \cite{enriquez}, and their correspondence is as follows:
\begin{align*}
	 \omega_{a_1\dotsc a_s}(p, q) &=  \omega_{a_1\dotsc a_s}^{\underline{p}q},\\
	 \psi_{a_1\dotsc a_s}(p, q)  &= \psi_{a_1\dotsc a_s}^{\underline{p}\underline{q}},\\
	 \Psi_{a_1\dotsc a_s}(p,q_1, q_2) &= \psi_{a_1\dotsc a_s}^{\underline{p}q_1q_2}.
\end{align*}
We use $p$, $p_i$, $q$ etc.\ for points on $C$, and reserve the symbols $z$, $z_i$ and $w$ for local coordinates.
\end{remark}

\begin{definition}
The connection form $\alpha^{(n)}_\mathrm{KZ}$ is defined by
\begin{align*}
	\alpha^{(n)}_\mathrm{KZ} = \sum_{1\leq i\leq n} \alpha_i^{(n)}
\end{align*}
where each $\alpha_i^{(n)}$ is given by
\begin{align*}
	\alpha_i^{(n)} &= \sum_{s\geq 0} \sum_{1\leq a_1,\dotsc a_s, b\leq g} \omega_{a_1\dotsc a_s b}^i \ad \bar x_i^{a_1} \cdots \ad \bar x_i^{a_s}(\bar y_i^b)\\
	&\qquad + \sum_{\substack{1\leq j\leq n\\j\neq i}} \sum_{s\geq 0} \sum_{1\leq a_1,\dotsc a_s \leq g} \Psi^{i,j}_{a_1\dotsc a_s} \ad \bar x_i^{a_1} \cdots \ad \bar x_i^{a_s}(\bar t_{ij}).
\end{align*}

The right-hand side lives in $H^0\big( \tilde C^{n+1}, K_{\tilde C}^{(i)}\otimes \hat{\mathfrak{t}}_{g,n} (\tilde \Delta\!^{(n+1)}) \big)$, but it is shown in \cite{enriquez} that it does not depend on the $(n+1)$-st coordinate of $\tilde C^{n+1}$, so this gives a $1$-form on $\tilde C^n$. Then, $\alpha^{(n)}_\mathrm{KZ}$ is compatible with the monodromy of $\mathcal{P}_n$: for $1\leq j\leq n$, we have
\[
	\gamma_a^{(j)}(\alpha^{(n)}_\mathrm{KZ}) = e^{\ad x^a_j}(\alpha^{(n)}_\mathrm{KZ}),
\]
and therefore descends to $H^0\big( C^n, \Omega^{1,0}_{C^n}\otimes (\mathcal{P}_n\times_\mathrm{ad}\hat{\mathfrak{t}}_{g,n})(\Delta\!^{(n)}) \big)$.
\end{definition}

\begin{lemma}[\cite{enriquez}, Lemma 6]\label{lem:integral}
For $s\geq 1$, $q, q_1, q_2 \in \mathrm{Int}(\mathcal{D})$ and $1\leq a\leq g$, we have
\[
	\int_{p\in \mathcal{A}_a} \omega_{a_1\dotsc a_s}(p, q) = \delta_{aa_1\dotsc a_s} \mathbf{b}_s\mbox{ and } \int_{p\in \mathcal{A}_a} \Psi_{a_1\dotsc a_s}(p, q_1, q_2) = 0,
\]
where $\mathbf{b}_s$ are the coefficients in the series expansion $\sum_{s\geq 0} \mathbf{b}_{s+1} t^s = \frac{t}{e^t - 1}$. 
\end{lemma}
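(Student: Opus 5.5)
Both identities will follow by induction on $s$. The quasi-periodicity relation \eqref{eq:residue} will be combined with the residue theorem applied on the fundamental domain $\mathcal{D}$. The normalization of the base cases is built into Definition \ref{def:enr}: $\int_{A_a}\omega_b=\delta_{ab}=\delta_{ab}\mathbf{b}_1$, since $\mathbf{b}_1=1$, and $\int_{A_a}\psi(p,q)=0$. The latter integrates in $q$ to give the $s=0$ version of the $\Psi$ identity.

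For general $s$, fix $q\in\mathrm{Int}(\mathcal{D})$ and a second index $a$. Consider the $1$-form $\eta(p)=\omega_{a_1\dotsc a_s}(p,q)\cdot u(p)$, where $u$ is a primitive on $\tilde C$ of a suitable holomorphic form. The most natural choice is $u(p)=\int_{\tilde *}^p\omega_{a}$, or rather the $B$-type coordinate that shifts by $1$ under $\gamma_a$; on $\tilde C$ this shift is built into $\omega_{a\,a_1\dotsc}$ itself. The cleaner route is to integrate $\omega_{a_1\dotsc a_s}(p,q)$ itself over $\partial\mathcal{D}=\bigcup_b\mathcal{A}_b\cup\gamma_b^{-1}(\mathcal{A}_b)$, with orientations. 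The result equals $2\pi\sqrt{-1}$ times the residue at $p=q$, which is $0$ for $s\neq 2$ and $-\delta_{a_1a_2}/(2\pi\sqrt{-1})$ for $s=2$. By \eqref{eq:residue}, the contribution of the pair $\mathcal{A}_b,\gamma_b^{-1}(\mathcal{A}_b)$ is $-\int_{\mathcal{A}_b}(\gamma_b^{(1)}-1)\omega_{a_1\dotsc a_s}=-\sum_{1\leq k<s}\frac{1}{k!}\delta_{ba_1\dotsc a_k}\int_{\mathcal{A}_b}\omega_{a_{k+1}\dotsc a_s}$, which is expressed via lower-$s$ integrals. This single boundary identity does not separate the individual $\mathcal{A}_a$-integrals, though. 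It only gives the sum over $b$.

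To isolate $\int_{\mathcal{A}_a}$, I would therefore instead use the defining characterization behind \eqref{eq:residue}, namely uniqueness. The $\omega_{a_1\dotsc a_s}$ are determined by \eqref{eq:residue} only up to adding holomorphic forms $\sum_b c_b\omega_b$ in $p$ (depending on $q$), and Enriquez fixes this ambiguity by an $A$-period normalization. The plan is to take that normalization from the construction and show it is consistent with \eqref{eq:residue}, as follows. Apply $\gamma_a^{(1)}$ and use $\int_{\gamma_a(\mathcal{A}_a)}\eta=\int_{\mathcal{A}_a}\gamma_a^{-1}\eta$. The loops $\mathcal{A}_a$ and $\gamma_a(\mathcal{A}_a)$ bound a region which, for $q\in\mathrm{Int}(\mathcal{D})$, contains the pole $q$ or $\gamma_a q$ depending on the side. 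Equating the two expressions then gives a recursion of the form $\int_{\mathcal{A}_a}\omega_{a_1\dotsc a_s}=\sum_k\frac{1}{k!}\delta_{aa_1\dotsc a_k}\int_{\mathcal{A}_a}\omega_{a_{k+1}\dotsc a_s}$ plus residue terms. For $a_1=\dots=a_s=a$, this is precisely the Bernoulli recursion $\sum_{k=0}^{s-1}\frac{\mathbf{b}_{s-k}}{k!}=\delta_{s1}$, coming from $\frac{t}{e^t-1}\cdot\frac{e^t-1}{t}=1$. For non-constant indices, the $\delta$'s kill all terms except those reducing to vanishing lower integrals.

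The $\Psi$ statement then follows by differentiating in $q$: $\psi_{a_1\dotsc a_s}=-d^{(2)}\omega_{a_1\dotsc a_sbb}$. Hence $\int_{\mathcal{A}_a}\Psi_{a_1\dotsc a_s}(p,q_1,q_2)=-\big[\int_{\mathcal{A}_a}\omega_{a_1\dotsc a_sbb}(p,q)\big]_{q=q_1}^{q_2}$. By the first identity the bracket is the constant $\delta\,\mathbf{b}_{s+2}$, independent of $q\in\mathrm{Int}(\mathcal{D})$, so the difference vanishes. Integrating over $\mathcal{A}_a$ commutes with $d^{(2)}$ because the path from $q_1$ to $q_2$ stays in $\mathrm{Int}(\mathcal{D})$, away from $\mathcal{A}_a$.

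The main obstacle is the middle step. It requires the correct bookkeeping of the annulus between $\mathcal{A}_a$ and its translate, and of where the pole lies; this is exactly the point the paper's appendix says needed correction in \cite{enriquez}. The crude period identities must be upgraded to the statement that the $A$-periods are \emph{constant} in $q$, and this constancy is exactly what makes the $\Psi$-integral vanish.
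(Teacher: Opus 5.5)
The paper gives no proof of this lemma; it quotes it from \cite{enriquez}, so I judge your proposal on its own. The $\Psi$-part is correct once the first identity is known: $\Psi_{a_1\dotsc a_s}(p,q_1,q_2)=\omega_{a_1\dotsc a_sbb}(p,q_1)-\omega_{a_1\dotsc a_sbb}(p,q_2)$, and the two $A$-periods cancel. The first identity, however, is not established, and there is a genuine gap. The argument you discard is in fact the right one.

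In your boundary computation, the factor $\delta_{ba_1\dotsc a_k}$ forces $b=a_1$, so the sum over $b$ has only one term. Apply the residue theorem on $\mathcal{D}$ not to $\omega_{a_1\dotsc a_s}$ but to $\omega_{aa_1\dotsc a_s}(\cdot,q)$, with one extra leading index $a$. Its only pole in $\mathcal{D}$ is $q$, and with $\partial\mathcal{D}=\sum_b(\mathcal{A}_b-\gamma_b^{-1}\mathcal{A}_b)$ you get
\[
\sum_{k=1}^{s}\frac{1}{k!}\,\delta_{aa_1\dotsc a_{k-1}}\int_{\mathcal{A}_a}\omega_{a_k\dotsc a_s}(p,q)=-2\pi\sqrt{-1}\,\mathrm{res}_{p=q}\,\omega_{aa_1\dotsc a_s}(p,q)=\delta_{s1}\delta_{aa_1}.
\]
For $s=1$ this reproduces $\int_{A_a}\omega_{a_1}=\delta_{aa_1}$, which confirms the sign. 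The $k=1$ term is exactly the unknown period $\int_{\mathcal{A}_a}\omega_{a_1\dotsc a_s}$. The terms with $k\geq 2$ involve shorter forms, and by induction they equal $\frac{1}{k!}\delta_{aa_1\dotsc a_s}\mathbf{b}_{s+1-k}$. Since $\sum_{k=1}^{s}\mathbf{b}_{s+1-k}/k!$ is the coefficient of $t^{s-1}$ in $\frac{e^t-1}{t}\cdot\frac{t}{e^t-1}=1$, you obtain $\delta_{aa_1\dotsc a_s}\mathbf{b}_s$. This holds for every $q\in\mathrm{Int}(\mathcal{D})$, so constancy in $q$ needs no separate argument.

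The step you propose instead fails, for three reasons.
\begin{enumerate}[(i)]
\item For $g\geq 2$, the curves $\mathcal{A}_a$ and $\gamma_a(\mathcal{A}_a)$ are two of the $2g$ boundary curves of $\gamma_a(\mathcal{D})$. The part of $\tilde C$ between them contains infinitely many translates of $\mathcal{D}$, hence infinitely many poles $\gamma q$. There is no compact annulus with a single pole.
\item Comparing $\int_{\gamma_a(\mathcal{A}_a)}$ with $\int_{\mathcal{A}_a}$ of the same form only computes $\int_{\mathcal{A}_a}(\gamma_a^{-1}-1)\omega_{a_1\dotsc a_s}$. In that expression the length-$s$ period cancels, so it can never determine it; shifting to $s+1$ indices is the missing idea. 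This shows up in your Bernoulli identity: $\sum_{k=0}^{s-1}\mathbf{b}_{s-k}/k!$ is the coefficient of $t^{s-1}$ in $\frac{te^t}{e^t-1}$, not in $1$. Already for $s=2$ it equals $\mathbf{b}_2+\mathbf{b}_1=\frac12\neq 0$.
\item Taking ``the $A$-period normalisation from the construction'' is circular if that normalisation is the statement itself. It also rests on a misconception: the tower defined by \eqref{eq:residue} has no leftover holomorphic ambiguity. Suppose two towers first differ at level $s$ by $h=\sum_b c_b\omega_b$. Their difference $H$ at level $s+1$ with leading index $a$ is holomorphic on $\mathcal{D}$ and satisfies $(\gamma_b-1)H=\delta_{ab}h$. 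The same residue computation then gives $c_a=\int_{\mathcal{A}_a}h=0$.
\end{enumerate}
Finally, the appendix corrects Lemma 9 (b) of \cite{enriquez}, not this lemma.
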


\begin{theorem}[\cite{enriquez}, Theorem 3]\label{thm:enriquez}
The connection $\nabla_n - \alpha^{(n)}_\mathrm{KZ}$ is flat. More specifically, we have $d\alpha^{(n)}_\mathrm{KZ} = [\alpha^{(n)}_\mathrm{KZ}, \alpha^{(n)}_\mathrm{KZ}] = 0$.
\end{theorem}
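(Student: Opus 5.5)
We have to show that $d\alpha^{(n)}_\mathrm{KZ}=0$ and $[\alpha^{(n)}_\mathrm{KZ},\alpha^{(n)}_\mathrm{KZ}]=0$. Flatness of $\nabla_n-\alpha^{(n)}_\mathrm{KZ}$ then follows, since $\nabla_n$ is flat and $\alpha^{(n)}_\mathrm{KZ}$ is $\gamma_a^{(j)}$-equivariant. The plan is a Liouville-type argument on the compact surface $C$: each relevant quantity is built so that it descends to $C$, has no poles, and has vanishing $A$-periods, which forces it to vanish.

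\emph{Closedness.} Each $\alpha_i^{(n)}$ is a $(1,0)$-form in the $i$-th variable that is holomorphic away from the diagonals. Hence $d\alpha^{(n)}_\mathrm{KZ}=\sum_{i\neq j} d^{(j)}\alpha_i^{(n)}$, which is a holomorphic $(2,0)$-form. Only the $\Psi^{i,j}$ terms depend on $p_j$. Since $\Psi(p,q_1,q_2)=\int_{q_1}^{q_2}\psi(p,q)$, we get $d^{(j)}\Psi^{i,j}_{a_1\dotsc a_s}=\psi_{a_1\dotsc a_s}(p_i,p_j)$. The term $d^{(j)}\alpha_i$ is therefore $\sum_s\psi_{a_1\dotsc a_s}(p_i,p_j)\ad\bar x_i^{a_1}\cdots\ad\bar x_i^{a_s}(\bar t_{ij})$. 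We have to show that this is symmetric under $(i,p_i)\leftrightarrow(j,p_j)$, because $dp_i\wedge dp_j$ is antisymmetric. For this we use the relation $[\bar x_i^a+\bar x_j^a,\bar t_{ij}]=0$, which rewrites $\ad\bar x_i$ acting on $\bar t_{ij}$ as $-\ad\bar x_j$. We combine it with the symmetry of the family $\psi_{a_1\dotsc a_s}(p,q)$ under exchanging $p$ and $q$ together with reversing and signing the indices. That symmetry should follow from the uniqueness characterisation of \eqref{eq:residue}, since both sides have the same monodromy and poles.

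\emph{Integrability.} We write $[\alpha,\alpha]=\sum_{i<j}2[\alpha_i,\alpha_j]$ and define
\[
F_{ij}=[\alpha_i^{(n)},\alpha_j^{(n)}]\in H^0\big(\tilde C^n, K^{(i)}\boxtimes K^{(j)}\otimes\hat{\mathfrak t}_{g,n}(*\tilde\Delta)\big).
\]
We fix all variables except $p_i$ and view $F_{ij}$ as a meromorphic $1$-form in $p_i$. The argument has three steps.
\begin{enumerate}[(a)]
\item Equivariance gives $\gamma_a^{(i)}F_{ij}=e^{\ad\bar x_i^a}F_{ij}$. Then $F_{ij}$ defines a section of $K_C\otimes\mathcal P\times_\mathrm{ad}\hat{\mathfrak t}$ in $p_i$.
\item We show that $F_{ij}$ has no poles. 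At $p_i=p_k$ with $k\ne i,j$, the residues cancel thanks to the relations $[\bar t_{ik},\bar t_{ij}+\bar t_{jk}]=0$ and $[\bar x_k^a,\bar t_{ij}]=0$. At $p_i=p_j$ we must also take the double poles coming from $\psi$ into account. There we use the expansion \eqref{eq:bidiff} and the relation $[\bar t_{ij},\bar x_i^a+\bar x_j^a]=0$. The computation is done degree by degree in the grading of $\hat{\mathfrak t}_{g,n}$.
\item We show that the $A$-periods vanish. Lemma \ref{lem:integral} gives $\int_{\mathcal A_a}\alpha_i=\sum_s\mathbf b_{s+1}(\ad\bar x_i^a)^s\bar y_i^a$, so that $\int_{\mathcal A_a}F_{ij}=[\sum_s\mathbf b_{s+1}(\ad\bar x_i^a)^s\bar y_i^a,\alpha_j]$. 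This vanishes once we show that $\alpha_j$ commutes with that element modulo the equivariance twist, using $[\bar x_i^a,\bar y_j^b]=\delta_{ab}\bar t_{ij}$ together with the $\Psi$ terms. This is the most delicate identity.
\end{enumerate}
A holomorphic twisted form that is unipotently equivariant and has vanishing normalised $A$-periods must be zero. To see this, work degree by degree: at the lowest degree, where the twist is trivial, the statement is the classical fact that a holomorphic differential with zero $A$-periods vanishes, and induction on the degree handles the rest.

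The main obstacle is step (b) at the collision $p_i=p_j$, where double poles and the monodromy-twisted higher $\omega_{a_1\dotsc a_s}$ interact. A second difficulty is the period computation in (c), which requires careful bookkeeping of the Bernoulli coefficients $\mathbf b_s$ against the defining relation $\sum_a[\bar x_i^a,\bar y_i^a]+\sum_j\bar t_{ij}=0$. The latter relation is where the genus-$g$ structure enters essentially. Appendix \ref{sec:correction} corrects a lemma used in this step, so I expect that to be the point requiring the most care.
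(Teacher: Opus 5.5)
The paper gives no proof of its own here. The statement is imported from \cite{enriquez}, and the only internal indication is that closedness is Enriquez' Lemma~15, whose proof uses just $[\bar x_i^a+\bar x_j^a,\bar t_{ij}]=0$ and $[\bar x_i^a,\bar x_j^b]=0$. Your closedness sketch is in line with this, with one caveat. The symmetry $\psi_{a_1\dotsc a_s}(p,q)=(-1)^s\psi_{a_s\dotsc a_1}(q,p)$ does not follow from \eqref{eq:residue} alone, since \eqref{eq:residue} only governs the first variable. Comparing the two sides also requires the monodromy of $\psi_{a_1\dotsc a_s}$ in its \emph{second} variable. That is Lemma~9(b) of \cite{enriquez} after applying $-d^{(w)}$, which is precisely the statement repaired in Appendix~\ref{sec:correction}, not something needed for your step (c).

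The integrability part has a genuine gap, and step (c) is wrong as written. $\alpha_j^{(n)}$ depends on $p_i$ through the terms $\Psi^{j,i}_{a_1\dotsc a_s}(p_j,q,p_i)\ad\bar x_j^{a_1}\cdots\ad\bar x_j^{a_s}(\bar t_{ij})$, so $\int_{p_i\in\mathcal A_a}[\alpha_i,\alpha_j]\neq[\int_{\mathcal A_a}\alpha_i,\alpha_j]$. In fact your right-hand side still depends on the integration variable. Take the grading $\deg\bar x=\deg\bar y=1$, $\deg\bar t=2$. The degree-$3$ part of your expression contains $\Psi(p_j,q,p_i)[\bar y_i^a,\bar t_{ij}]$, the only $p_i$-dependent term in that degree, and it is singular at $p_i=p_j$. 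So your expression is neither the period nor zero.

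The actual periods involve integrals such as $\int_{p_i\in\mathcal A_a}\omega_{a_1\dotsc a_sb}(p_i,q)\,\Psi_{c_1\dotsc c_r}(p_j,q,p_i)$, which Lemma~\ref{lem:integral} does not provide. Nothing in your proposal evaluates or circumvents them, so the final Liouville step never receives its hypothesis. One possible way around computing them is the mechanism of Lemma~\ref{lem:uniq}. For a pole-free $F$ with $\gamma_a^{(i)}F=e^{\ad\bar x_i^a}F$, Stokes on $\mathcal D$ gives $\sum_a(e^{\ad\bar x_i^a}-1)\big(\int_{\mathcal A_a}F\big)=0$. You would then need injectivity of $(v_a)_a\mapsto\sum_a(e^{\ad\bar x_i^a}-1)(v_a)$ on the ideal in which $[\alpha_i,\alpha_j]$ takes its values, which is a substantial extra argument.

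Step (b) at $p_i=p_j$ is also only asserted, and you have located the difficulty in the wrong place. $\alpha^{(n)}_{\mathrm{KZ}}$ has only simple poles, because $\psi$ enters only through its primitive $\Psi$. So the double-pole coefficient of $[\alpha_i,\alpha_j]$ is a multiple of $[\bar t_{ij},\bar t_{ij}]=0$. The real issue is the residue $\frac{1}{2\pi\sqrt{-1}}[\bar t_{ij},(\alpha_i+\alpha_j)^{\mathrm{reg}}|_{p_i=p_j}]$.

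This residue does not vanish term by term from $[\bar t_{ij},\bar x_i^a+\bar x_j^a]=0$. Rewrite the diagonal values of the $\omega$-terms through $\bar x_i^a+\bar x_j^a$ and $\bar y_i^b+\bar y_j^b$. Starting in degree $4$, this leaves correction terms like $(\ad\bar x_j^c\ad\bar x_j^d+\ad\bar x_j^d\ad\bar x_j^c)(\bar t_{ij})$, which must be cancelled by the diagonal values of the $\Psi^{i,j}$- and $\Psi^{j,i}$-terms.

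The input that does this is the simplicial degeneration formula: Proposition~13 of \cite{enriquez}, whose framed refinement here is Lemma~\ref{lem:eta} and Theorem~\ref{thm:conv} (their proofs do not use integrability). It says that on the diagonal, the regular part of $\alpha_i+\alpha_j$ is the doubling of the corresponding component of $\alpha^{(n-1)}_{\mathrm{KZ}}$ plus a scalar form times $\bar t_{ij}$. Hence it commutes with $\bar t_{ij}$, which is the same mechanism used in the proof of Lemma~\ref{lem:globalR}.
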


The case of $g=0$ is not discussed in \cite{enriquez}, but it works without any modification; in fact, the resulting connection is exactly the KZ connection extended to the Riemann sphere.

\begin{remark}
Some choices of an initial data, namely, a choice of
\begin{itemize}
	\item a closed Riemann surface $C$ and a base point $*\in C$,
	\item a marking $\pi_1(C, *) \cong \pi_g$, and
	\item $\tilde *\in \tilde C$ lifting $*\in C$,
\end{itemize}
may result in the same flat connection. Let us fix $(C,*)$ so that we can compare connections on
\[
	\mathcal{P}_n = \tilde C\times_{(F_g)^n} \exp(\hat{\mathfrak{t}}_{g,n}) \to C^n.
\]
Then, we expect that the stabiliser group of $\alpha_\mathrm{KZ}^{(n)}$ is isomorphic to the subgroup
\[
	\{F \in \Aut(\pi_g): F\mbox{ descends to}\, \id\colon F_g\to F_g\}
\]
of $\Aut(\pi_g)$ acting of the set of markings, namely the set of isomorphisms $\mathrm{Isom}(\pi_1(C,*), \pi_g)$.\\
\end{remark}

\section{The Main Construction}\label{sec:main}

From now on, we construct a lift of Enriquez' connection $\alpha_\mathrm{KZ}^{(n)}$ to the framed configuration space.

\begin{definition}
For a finite set $I$, we define the \textit{framed configuration space} $\mathrm{Conf}^\mathrm{fr}_I(C)$ of points labelled by $I$  as the pull-back bundle fitting in the diagram
\[\begin{tikzcd}[cramped]
	\mathrm{Conf}^\mathrm{fr}_I(C) \arrow[r]\arrow[d, "\pi"] &(TC \setminus 0_{TC})^I \arrow[d]\\
	\mathrm{Conf}_I(C) \arrow[r] & C^I
\end{tikzcd}\]
where $TC$ is the tangent bundle of $C$, and $0_{TC}$ is its zero-section. We denote $\pi^{-1}(\Delta^{(I)})$ again by $\Delta^{(I)}$. We write $\mathrm{Conf}^\mathrm{fr}_n(C)$ when $I = \{1,\dotsc, n\}$ as before.
\end{definition}

We need the framed version of the Drinfeld--Kohno Lie algebra.
\begin{definition}
For $g\geq 0$ and a finite set $I$, the Lie algebra $\mathfrak{t}^f_{g,I}$ over $\mathbb{C}$ is generated by elements
\[
	t_{ij}\; (i,j \in I)\quad\mbox{and}\quad x_i^a, y_i^a\; (i \in I, 1\leq a\leq g)
\]
with the relations given, for $i,j,k,l\in I$ and $1\leq a,b\leq g$, by
\begin{align*}
\begin{aligned}
	&t_{ij} = t_{ji},&\\
	&[t_{ij}, t_{kl}] = 0& &\mbox{if } \{i,j\}\cap\{k,l\}=\varnothing,\\
	&[t_{ij},t_{ik} + t_{jk}] = 0& &\mbox{if } \{i,j\}\cap\{k\}=\varnothing,\\
	&[x_i^a, y_j^b] = \delta_{ab} t_{ij}& &\mbox{if } i\neq j,\\
	&[x_i^a, x_j^b] =[y_i^a, y_j^b] = 0& &\mbox{if } i\neq j,\\
	&[x_k^a, t_{ij}] = [y_k^a, t_{ij}] = 0& &\mbox{if } \{i,j\}\cap\{k\}=\varnothing,\\
	&[x_i^a + x_j^a, t_{ij}] = [y_i^a + y_j^a, t_{ij}] = 0
\end{aligned}
\end{align*}
and, for $i\in I$, 
\[
	\sum_{1\leq a\leq g} [x_i^a, y_i^a] + \sum_{j\in I\setminus\{i\}} t_{ij} = (g-1) t_{ii}\,.
\]
\end{definition}
The differences from $\mathfrak{t}_{g,I}$ is the presence of elements $t_{ii}$ $(i\in I)$ and the last relation. We can check that $t_{ii}$ for each $i\in I$ is a central element by substituting $i=j$ in the above relations. In fact, $\mathfrak{t}^f_{g,I}$ is a central extension of $\mathfrak{t}_{g,I}$, forming the exact sequence
\[
	0 \to \bigoplus_{i \in  I}\mathbb{C}t_{ii} \to \mathfrak{t}^f_{g,I} \xrightarrow{q} \mathfrak{t}_{g,I} \to 0\,.
\]
This definition is (equivalent but) a slight modification of the one in \cite{gonzalez}: we changed the coefficient of $t_{ii}$ in the last relation from $2-2g$ to $g-1$ so that the operadic composition map would look simple.\\

For later use, we prepare the following element.

\begin{definition}\label{def:beta}
For $n\geq 0$, we define $\beta_i^{(n)} \in H^0(\tilde C^{n+1}, K_{\tilde C}^{(i)} \otimes \hat{\mathfrak{t}}^f_{g,n}(\tilde \Delta^{(n+1)}))$ by 
\begin{align*}
	\beta_i^{(n)} &= \sum_{s\geq 0} \sum_{1\leq a_1,\dotsc a_s, b\leq g} \omega_{a_1\dotsc a_s b}^i \ad x_i^{a_1} \cdots \ad x_i^{a_s}(y_i^b)\\
	&\qquad + \sum_{\substack{1\leq j\leq n\\j\neq i}} \sum_{s\geq 0} \sum_{1\leq a_1,\dotsc a_s \leq g} \Psi^{i,j}_{a_1\dotsc a_s} \ad x_i^{a_1} \cdots \ad x_i^{a_s}(t_{ij}).
\end{align*}
\end{definition}
Until the end of this section, we fix $n\geq 0$ and drop the index $(n)$. The difference between $\alpha_i$ and $\beta_i$ is that the latter takes the coefficient in $\mathfrak{t}_{g,n}^f$. Contrary to $\alpha_i$, the $1$-form $\beta_i$ does depend on the $(n+1)$-st coordinate of $\tilde C^{n+1}$, and we have the following properties.

\begin{lemma}\label{lem:beta}
For any $1\leq a\leq g$ and $1\leq i,j\leq n$, we have 
\begin{enumerate}[(1)]
	\item $\gamma_a^{(j)}(\beta_i) = e^{\ad x_j^a}(\beta_i)$,
	\item $d^{(n+1)}(\beta_i) = \psi^i \cdot (1-g) t_{ii}$, and
	\item $\int_{p\in \mathcal{A}_a} \beta_i(p_1, \dotsc, p_{i-1}, p, p_{i+1}, \dotsc, p_n, q) = \frac{\ad x_i^a}{e^{\ad x_i^a} - 1} (y_i^a)$
\end{enumerate}
where $p_1, \dotsc, p_n, q\in \mathrm{Int}(\mathcal{D})$.
\end{lemma}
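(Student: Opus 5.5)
Each of the three items reduces to a direct computation from the defining properties \eqref{eq:residue} of the forms $\omega_{a_1\dotsc a_s}$, $\psi_{a_1\dotsc a_s}$, $\Psi_{a_1\dotsc a_s}$, together with the relations of $\mathfrak{t}^f_{g,n}$. These are the same computations Enriquez carries out for $\alpha_i$, except that we must now track where the central elements $t_{jj}$ appear. The guiding principle is that $q\colon \mathfrak{t}^f_{g,n}\to\mathfrak{t}_{g,n}$ maps $\beta_i$ to $\alpha_i$ (viewed on $\tilde C^{n+1}$). So every identity for $\alpha_i$ in \cite{enriquez} holds for $\beta_i$ modulo $\bigoplus_j \mathbb{C}t_{jj}$, and only the central corrections need to be computed.

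For (1), we split into the cases $j\neq i$ and $j=i$.
\begin{itemize}
\item If $j\neq i$, only the $\Psi^{i,j}$ terms depend on $p_j$. The $p_j$-variable of $\Psi_{a_1\dotsc a_s}(p_i,q,p_j)$ enters only as an endpoint of $\int_q^{p_j}\psi$, and $\psi_{a_1\dotsc a_s}$ transforms in its second variable via \eqref{eq:residue} applied to $\omega_{\dotsc bb}$. This gives the transformation of $\Psi^{i,j}$ under $\gamma_a^{(j)}$. We then compare with $e^{\ad x_j^a}$, using $[x_j^a, x_i^b]=0$, $[x_j^a,y_i^b]=-\delta_{ab}t_{ij}$, and $[x_j^a,t_{ij}]=-[x_i^a,t_{ij}]$. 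None of these relations involves $t_{ii}$, so Enriquez' argument transfers verbatim. The same holds for the action on $\omega^i$, whose $p_j$-dependence is trivial.
\item If $j=i$, we use the first formula of \eqref{eq:residue}. It turns $\sum_s \omega^i_{a_1\dotsc a_sb}\ad x_i^{a_1}\cdots$ into $e^{\ad x_i^a}$ of the same sum, by the standard reorganisation of the coefficients $\frac1{k!}$. The $\Psi^{i,j}$ terms are handled likewise. Here $e^{\ad x_i^a}$ is a legitimate automorphism, since $t_{ii}$ is central.
\end{itemize}
So (1) follows from the computation of \cite{enriquez} once we check that no step invokes the last relation of $\mathfrak{t}^f_{g,n}$.

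For (2), we differentiate in $q$, which is the only place the $(n+1)$-st variable enters.
\begin{itemize}
\item The forms $\omega^i_{a_1\dotsc a_sb}$ depend on $q$, and by definition $d^{(2)}\omega_{a_1\dotsc a_sbb}=-\psi_{a_1\dotsc a_s}$. More generally, we need $d^{(2)}\omega_{a_1\dotsc a_sb}$ for arbitrary final index.
\item The term $\Psi^{i,j}_{a_1\dotsc a_s}$ has $d^{(n+1)}=-\psi^i_{a_1\dotsc a_s}$ by the fundamental theorem of calculus.
\end{itemize}
Collecting these, $d^{(n+1)}\beta_i$ equals $\sum_s\psi^i_{a_1\dotsc a_s}\ad x_i^{a_1}\cdots\ad x_i^{a_s}(Z)$, where
\[
Z=\sum_b[x_i^b,y_i^b]+\sum_{j\neq i}t_{ij}.
\]
The key identity I would establish first is $d^{(2)}\omega_{a_1\dotsc a_s b}=-\psi_{a_1\dotsc a_{s-1}}\,\delta$-type terms, i.e.\ only $\omega$'s with two equal trailing indices have $q$-dependence, arranged so that the $y$-sum assembles into $[x_i^b,y_i^b]$. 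I expect verifying this bookkeeping, via uniqueness of forms satisfying \eqref{eq:residue}, to be the main obstacle. By the last relation, $Z=(g-1)t_{ii}$. Since $t_{ii}$ is central, all terms with $s\geq1$ vanish, leaving $\psi^i\cdot(g-1)t_{ii}$ up to the sign convention, which matches the claimed $(1-g)t_{ii}$ after tracking signs.

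For (3), we integrate term by term using Lemma \ref{lem:integral}. The $\Psi$ terms vanish. The $\omega$ terms give
\[
\sum_s \mathbf{b}_{s+1}(\ad x_i^a)^s(y_i^a)=\frac{\ad x_i^a}{e^{\ad x_i^a}-1}(y_i^a).
\]
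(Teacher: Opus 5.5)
Your route is the paper's: (1) from the deck-transformation behaviour of the coefficient forms plus the relations of $\mathfrak{t}^f_{g,n}$ other than the last, (2) by differentiating in $q$ and applying the last relation, (3) from Lemma \ref{lem:integral}. Item (3) is exactly right.

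\textbf{Item (1), $j\neq i$: the stated justification fails.} Equation \eqref{eq:residue} prescribes only how $\omega_{a_1\dotsc a_s}$ transforms under deck transformations in its \emph{first} variable. It says nothing about how $\psi_{a_1\dotsc a_s}$ transforms in the second. What you actually need is
\[
\gamma_a^{(j)}\Psi^{i,j}_{a_1\dotsc a_s}-\Psi^{i,j}_{a_1\dotsc a_s}=\int_{p_j}^{\gamma_a^{-1}p_j}\psi_{a_1\dotsc a_s}(p_i,w)=-\big((\gamma_a^{(w)}-1)\omega_{a_1\dotsc a_sbb}\big)(p_i,p_j),
\]
and this is Enriquez' Lemma 9(b) (Lemma \ref{lem:lem9}). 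That is a separate, non-trivial input; its original proof is repaired in Appendix \ref{sec:correction}, and the paper's one-line proof of (1) relies on it implicitly.

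With it, the $\Psi$-variation equals $\sum_m\omega_{a_1\dotsc a_ma}(p_i,p_j)\,\ad x_i^{a_1}\cdots\ad x_i^{a_m}\frac{1-e^{-\ad x_i^a}}{\ad x_i^a}(t_{ij})$. The $y$-part of $(e^{\ad x_j^a}-1)\beta_i$ is the same expression with $\omega(p_i,q)$ in place of $\omega(p_i,p_j)$. Getting this requires $[x_j^a,y_i^b]=+\delta_{ab}t_{ij}$, not $-\delta_{ab}t_{ij}$ as you wrote; with your sign the two sides do not match.

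The remaining discrepancy is $\omega_{a_1\dotsc a_ma}(p_i,p_j)-\omega_{a_1\dotsc a_ma}(p_i,q)=-\delta_{a_ma}\Psi^{i,j}_{a_1\dotsc a_{m-1}}$. It produces $\sum_s\Psi^{i,j}_{a_1\dotsc a_s}\ad x_i^{a_1}\cdots\ad x_i^{a_s}(e^{-\ad x_i^a}-1)(t_{ij})$, which is exactly the $\Psi$-part of $(e^{\ad x_j^a}-1)\beta_i$. So the identity $d^{(2)}\omega_{a_1\dotsc a_mb}=-\delta_{a_mb}\psi_{a_1\dotsc a_{m-1}}$, which you postpone to (2), is already needed in (1). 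This computation holds for arbitrary $q$ and never uses the last relation, and it is what justifies ``Enriquez' argument transfers verbatim'' for the $q$-dependent form $\beta_i$.

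\textbf{Item (2): the sign is forced.} There is no sign convention to track. With the two minus signs you state yourself, the collected expression is $-\sum_s\psi^i_{a_1\dotsc a_s}\ad x_i^{a_1}\cdots\ad x_i^{a_s}(Z)$, which equals $(1-g)\,\psi^it_{ii}$ exactly.

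The paper simply quotes $d^{(2)}\omega_{a_1\dotsc a_sb}=-\delta_{a_sb}\psi_{a_1\dotsc a_{s-1}}$ from Enriquez, and your uniqueness idea does prove it. Argue inductively in $s$ on the difference $d^{(2)}\omega_{a_1\dotsc a_s}+\delta_{a_{s-1}a_s}\psi_{a_1\dotsc a_{s-2}}$. It is $F_g$-invariant in the first variable. It has no pole at $p=q$; for $s=2$ the double poles cancel by the residue and bi-residue normalisations. It has zero $A$-periods, since the periods in Lemma \ref{lem:integral} are independent of $q$ and $\psi$ is $A$-normalised. Hence it vanishes.
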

\noindent Proof. (1) It follows from the recursion formulae for $\omega_{a_1\dotsc a_s b}^i$ and $\Psi^{i,j}_{a_1\dotsc a_s}$.\\[3pt]
\noindent (2) Since we have $d^{(n+1)}(\omega_{a_1\dotsc a_s b}^i) = -\delta_{a_s b} \psi_{a_1\dotsc a_{s-1}}^i$ and $d^{(n+1)}(\Psi^{i,j}_{a_1\dotsc a_s}) = -\psi^i_{a_1\dotsc a_s}$, we obtain
\begin{align*}
	&d^{(n+1)}(\beta_i)\\
	&= - \sum_{s\geq 1} \sum_{1\leq a_1,\dotsc a_s, b\leq g} \delta_{a_s b} \psi_{a_1\dotsc a_{s-1}}^i \ad x_i^{a_1} \cdots \ad x_i^{a_s}(y_i^b) - \sum_{j: j\neq i} \sum_{s\geq 0} \sum_{1\leq a_1,\dotsc a_s \leq g} \psi^i_{a_1\dotsc a_s}\ad x_i^{a_1} \cdots \ad x_i^{a_s}(t_{ij})\\
	&= - \sum_{s\geq 1} \sum_{1\leq a_1,\dotsc a_s\leq g} \psi_{a_1\dotsc a_{s-1}}^i \ad x_i^{a_1} \cdots \ad x_i^{a_{s-1}}([x_i^{a_s}, y_i^{a_s}]) - \sum_{j: j\neq i} \sum_{s\geq 0} \sum_{1\leq a_1,\dotsc a_s \leq g} \psi^i_{a_1\dotsc a_s}\ad x_i^{a_1} \cdots \ad x_i^{a_s}(t_{ij})\\
	&= \sum_{s\geq 1} \sum_{1\leq a_1,\dotsc a_{s-1}\leq g} \psi_{a_1\dotsc a_{s-1}}^i \ad x_i^{a_1} \cdots \ad x_i^{a_{s-1}}\Big(\sum_{j:j\neq i}t_{ij} + (1-g)t_{ii} \Big)\\
	&\qquad - \sum_{j: j\neq i} \sum_{s\geq 0} \sum_{1\leq a_1,\dotsc a_s \leq g} \psi^i_{a_1\dotsc a_s}\ad x_i^{a_1} \cdots \ad x_i^{a_s}(t_{ij})\\
	&= \psi^i\cdot (1-g)t_{ii}.
\end{align*}

\noindent (3) This follows from Lemma \ref{lem:integral} together with the definition of $\beta_i$. \qed\\

Now take a parametrisation $\ell\colon S^1 \to \mathcal{A}_a$ of the simple loop $\mathcal{A}_a$. We define the loop $\vec{\mathcal{A}}_a$ in $T\tilde C\setminus 0_{T\tilde C}$ as the image of
\begin{align}\label{eq:vecA}
	S^1 \to T\tilde C\setminus 0_{T\tilde C}: t \mapsto (\ell(t), \frac{d\ell}{dt}(t)).
\end{align}
Then, $\vec{\mathcal{A}}_a$ is clearly a lift of $\mathcal{A}_a$ along the projection $T\tilde C\setminus 0_{T\tilde C} \to \tilde C$, and its homotopy class in $T\tilde C\setminus 0_{T\tilde C}$ does not depend on the parametrisation. We also denote its image by $d\varpi\colon T\tilde C \to TC$ by $\vec{\mathcal{A}}_a$.

For a local coordinate $z\colon C\supset U \to \mathbb{C}$, let
\begin{align}\label{eq:assoccoord}
	(z, \lambda) \colon TU \xrightarrow{dz} T\mathbb{C} \cong \mathbb{C}^2
\end{align}
be the associated local cordinate of $TC$. Let $\Delta'$ be the inverse image of the diagonal in $C\times C$ along $TC\times \tilde C \to C\times C$.

\begin{lemma}\label{lem:phi}
There uniquely exists a meromorphic $1$-form
\[
	\varphi \in H^0(TC \times \tilde C, (\Omega^{1,0}_{TC}\boxtimes \mathcal{O}_{\tilde C})(\Delta' + 0_{TC}))
\]
with the following properties:
\begin{enumerate}[(1)]
	\item For each local coordinate $(z,\lambda)$ of $TC$ as above, $\varphi$ is locally of the form
	\[	
		\frac{d\log\lambda}{2\pi\sqrt{-1}}  + H^0\big( C\times \tilde C, (K_C \boxtimes \mathcal{O}_{\tilde C})(\Delta') \big)
	\]
	and hence $d^{(1)}\varphi = 0$,
	\item $\mathrm{res}_{\Delta'} \varphi = \frac{2g-2}{2\pi\sqrt{-1}}$,
	\item $d^{(2)}(\varphi) = (2g-2)\psi$, and
	\item $\int_{\vec p\in \vec{\mathcal{A}}_a} \varphi(\vec p, q) = 0$ for $q\in\mathrm{Int}(\mathcal{D})$.
\end{enumerate}
Here, $d^{(1)}$ (resp.\ $d^{(2)}$) is the exterior derivative applied to the first factor $TC$ (resp.\ the second factor $\tilde C$) of $TC\times \tilde C$.
\end{lemma}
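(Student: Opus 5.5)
The plan is to construct $\varphi$ explicitly from an auxiliary meromorphic differential on $C$, correct it with the integrated Bergman kernel $\Psi$ and with the holomorphic forms $\omega_a$, and then get uniqueness from the vanishing of $A$-normalised holomorphic differentials.

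\emph{Step 1: a raw candidate from a meromorphic differential.} Fix any nonzero meromorphic $1$-form $\eta$ on $C$, with divisor $\mathrm{div}(\eta)=\sum_k m_k r_k$ and $\sum_k m_k = 2g-2$. We may take the $r_k$ away from the loops $A_a$ and lift them to points $\tilde r_k\in\mathrm{Int}(\mathcal{D})$. Pairing $\eta$ with tangent vectors gives a meromorphic function $f(\vec p)=\eta_p(\vec p)$ on $TC$. In a chart $(z,\lambda)$ with $\eta=h(z)\,dz$ we have $f=\lambda h(z)$. Hence
\[
\frac{d\log f}{2\pi\sqrt{-1}}=\frac{d\log\lambda}{2\pi\sqrt{-1}}+\frac{d\log h}{2\pi\sqrt{-1}},
\]
which has the shape required in (1). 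Its residue along the fibre over $r_k$ is $m_k/2\pi\sqrt{-1}$.

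\emph{Step 2: moving the residues to the diagonal.} Set
\[
\varphi_0(\vec p,q)=\frac{d\log f}{2\pi\sqrt{-1}}+\sum_k m_k\,\Psi(p,\tilde r_k,q),
\]
where the integral defining $\Psi$ runs along a path in $\tilde C$ from $\tilde r_k$ to $q$. By \eqref{eq:bidiff}, $\Psi(p,\tilde r_k,q)$ has residue $-1/2\pi\sqrt{-1}$ at $p=r_k$ and $+1/2\pi\sqrt{-1}$ at $p=q$. The residues at the $r_k$ therefore cancel, and the residue along $\Delta'$ is $\sum_k m_k/2\pi\sqrt{-1}=(2g-2)/2\pi\sqrt{-1}$, which is (2). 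Differentiating in $q$ gives $d^{(2)}\varphi_0=\sum_k m_k\psi=(2g-2)\psi$, which is (3).

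Since $p$ lives on $C$ and $\psi\in H^0(C^2,\dots)$, the only issue is that the $q$-integral is well defined on $\tilde C$. Changing the path changes $\Psi$ by a period of $\psi(p,\cdot)$ around a loop in $\tilde C$. The image of $\pi_1(\tilde C)$ in $\pi_g$ is $\Ker\sigma$, the normal closure of the $A_a$. Periods of $\psi(p,\cdot)$ around $A$-loops vanish by the normalisation and the symmetry of $\psi$. Residues around $p$ are zero, since $\psi$ has a double pole with no residue. So the integral is path independent. I expect this well-definedness check, together with care about base points, to be the main technical point.

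\emph{Step 3: normalising the $A$-periods.} Let $c_a=\int_{\vec{\mathcal{A}}_a}\varphi_0(\vec p,q)$ for $q\in\mathrm{Int}(\mathcal{D})$. The $\Psi$-terms contribute nothing, by the $A$-normalisation of $\psi$ (the $s=0$ analogue of Lemma~\ref{lem:integral}; the loop avoids $q$ and the $r_k$). So $c_a=\frac{1}{2\pi\sqrt{-1}}\int_{\vec{\mathcal A}_a}d\log f$ is a constant independent of $q$. Put
\[
\varphi=\varphi_0-\sum_a c_a\,\omega_a .
\]
This does not affect (1)--(3), and it gives (4) because $\omega_a$ is dual to the $A$-cycles. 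The form $d^{(1)}\varphi=0$ holds since locally $\varphi$ is $d\log\lambda$ plus a meromorphic $1$-form in one complex variable.

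\emph{Step 4: uniqueness.} Let $\varphi,\varphi'$ both satisfy (1)--(4) and consider $\varphi-\varphi'$. The $d\log\lambda$ parts cancel, so the difference is pulled back from $C\times\tilde C$. Each form has at most a simple pole along $\Delta'$, with the same residue by (2), so the difference is holomorphic in $p$. By (3) it is independent of $q$. It is therefore a holomorphic $1$-form on $C$ with vanishing $A$-periods by (4), hence zero. In particular, the construction does not depend on the choice of $\eta$.
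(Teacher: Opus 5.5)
Your proposal is correct and is essentially the paper's proof: the same candidate $\frac{d\log f}{2\pi\sqrt{-1}}+\sum_k m_k\Psi(p,\tilde r_k,q)$ corrected by a holomorphic $1$-form on $C$, and the same uniqueness argument via vanishing of $A$-normalised holomorphic differentials. The only differences are cosmetic. You get $q$-independence of the $A$-periods by placing the lifts $\tilde r_k$ in $\mathrm{Int}(\mathcal{D})$, which needs the divisor of $\eta$ to avoid the $A$-loops (achievable by a generic choice of $\eta$), whereas the paper normalises at a single $q_0$ and propagates via $d^{(2)}\varphi=(2g-2)\psi$ and $\int_{A_a}\psi=0$, which avoids that requirement; you also add an explicit path-independence check for $\Psi$ on $\tilde C$ that the paper takes from the definition.
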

\noindent Proof. (Uniqueness) Let $\varphi, \varphi'$ be two such $1$-forms and put $\Phi = \varphi - \varphi'$. Then, the condition (1) implies that $\Phi$ descends to $C\times \tilde C$ since the term $\frac{d\log\lambda}{2\pi\sqrt{-1}}$ cancels out  on any local coordinate as above. Moreover, (2) forces $\Phi$ to be holomorphic. We also have
\[
	d^{(2)}(\Phi) = 0\quad\mbox{and}\quad \int_{\vec p\in \vec{\mathcal{A}}_a} \Phi(\vec p, q) = 0,
\]
which amounts to say that $\Phi$ is a holomorphic $1$-form on $C$ with all the integration over $A$-loops are zero. Thus, we have $\Phi = 0$.

\noindent (Existence) Let $\omega$ be any non-zero meromorphic $1$-form on $C$ and consider the following meromorphic function
\begin{align*}
	F_\omega\colon TC& \to \mathbb{C}\\
	\vec p = (p,v) &\mapsto \langle \omega, v\rangle
\end{align*}
on $TC$, where $p\in C$ and $v\in T_pC$. This defines a meromorphic $1$-form $d\log F_\omega$ on $TC$. For a local coordinate $(z, \lambda)$ of $TC$ as above, putting $\omega = f dz$ with some meromorphic function $f$, we have
\[
	F_\omega = f \lambda
\]
and therefore
\[
	d\log F_\omega = d\log \lambda + d\log f.
\]
Since the divisor $(\omega) = \sum_i n_i c_i \,(n_i \in \mathbb{Z}, c_i\in C)$ has degree $2g-2$, $d\log F_\omega$ has a simple pole at $c_i$ with the residue $n_i$ such that $\sum_i n_i = 2g-2$, and along $0_{TC}$ with the residue $1$. 

On the other hand, $\Psi \in H^0\big( \tilde C^3, K_{\tilde C} ^{(1)}(\tilde\Delta\!^{(3)}) \big)$ is $F_g$-invariant under the action to the first component of $\tilde C^3$, so we regard $\Psi$ as a $1$-form on $C\times \tilde C^2$. Then, it has simple poles only on the loci
\[
	\{(p,q_1,q_2)\in C\times \tilde C^2: p = \varpi(q_1)\}\quad\mbox{and}\quad\{(p,q_1,q_2)\in C\times \tilde C^2: p = \varpi(q_2)\} 
\]
with the residues $\frac{-1}{2\pi\sqrt{-1}}$ and $\frac{1}{2\pi\sqrt{-1}}$, respectively, by the expansion \eqref{eq:bidiff}. We choose an arbitrary lift $\tilde c_i \in \tilde C$ of $c_i$ and consider the $1$-form
\[
	\frac{d\log F_\omega(\vec p) }{2\pi\sqrt{-1}}+ \sum_i n_i \Psi(p, \tilde c_i, q)
\]
on $TC\times \tilde C$, whose poles are only on $\{(\vec p, q)\in TC\times \tilde C: p =  \varpi(q)\}$ with the residue $\frac{2g-2}{2\pi\sqrt{-1}}$. Now take any $q_0\in\mathrm{Int}(\mathcal{D})$ and a (unique) holomorphic $1$-form $h$ on $C$ so that
\[
	\varphi(\vec p, q) := \frac{d\log F_\omega(\vec p) }{2\pi\sqrt{-1}}+ \sum_i n_i \Psi(p, \tilde c_i, q) + h(p)
\]
are normalised by $\{A_a\}_{1\leq a\leq g}$ at $q_0$:
\[
	\int_{\vec p\in\vec{\mathcal{A}}_a} \varphi(\vec p, q_0) = 0
\]
for all $1\leq a\leq g$.

Now we check (1)--(4). First of all, (1) and (2) follow from the above expression of $\varphi$. (3) follows from $d^{(3)}(\Psi)(p, q_1, q_2) = \psi(p,q_2)$. For (4), we use (3) to compute
\[
	d\int_{\vec p\in \vec{\mathcal{A}}_a} \varphi(\vec p, q) = \int_{p\in A_a} (2g-2)\psi(p, q) = 0,
\]
which implies the integral
\[
	\int_{\vec p\in \vec{\mathcal{A}}_a} \varphi(\vec p, q)
\]
is independent of $q\in\mathrm{Int}(\mathcal{D})$. Since this vanishes at $q = q_0$, the claim (4) follows. \qed\\[-7pt]

\begin{definition}\label{def:conn}
We define the genus $g$ \textit{framed KZB connection form}
\[	
	\vec\alpha_\mathrm{KZ} \in  H^0(T\tilde C^n\times \tilde C, (\Omega^{1,0}_{T\tilde C^n}\boxtimes \mathcal{O}_{\tilde C})\otimes \hat{\mathfrak{t}}^f_{g,n} (\tilde \Delta^{(n+1)} + \sum_{1\leq i\leq n}0_{T\tilde C}^{(i)}))
\]
by the formula
\[
	\vec\alpha_\mathrm{KZ} = \sum_{1\leq i\leq n} \Big(\beta_i + \varphi^i \frac{t_{ii}}{2} \Big),
\]
where $0_{T\tilde C}^{(i)}$ is the $i$-th copy of $0_{T\tilde C}$, $\beta_i$ is regarded as the $1$-form on $T\tilde C^n\times \tilde C$ via the pull-back by $T\tilde C^n\times \tilde C \to \tilde C^{n+1}$, and $\varphi^i$ is the pull-back of $\varphi$ along
\[
	T\tilde C^n\times \tilde C \to TC \times \tilde C: (\vec p_1,\dotsc, \vec p_n, q) \mapsto (d\varpi(\vec p_i), q).
\]
\end{definition}

\begin{theorem}\label{thm:kzb}
Let $q\colon \mathfrak{t}_{g,n}^f \twoheadrightarrow \mathfrak{t}_{g,n}$ and $\pi\colon \mathrm{Conf}_n^\mathrm{fr}(C) \twoheadrightarrow \mathrm{Conf}_n(C)$ be the quotient maps. Then,
\begin{enumerate}[(1)]
	\item $\vec\alpha_\mathrm{KZ}$ induces a meromorphic $\exp(\hat{\mathfrak{t}}^f_{g,n})$-connection on $\mathcal{P}^f_n \to TC^n$, where $\mathcal{P}^f_n$ is defined by
	\[
		\mathcal{P}^f_n = T\tilde C^n \times_{(F_g)^n} \exp(\hat{\mathfrak{t}}^f_{g,n})
	\]
	with $(F_g)^n$ acting on $T\tilde C^n$ via deck transformations and on $\exp(\hat{\mathfrak{t}}^f_{g,n})$ by
	\begin{align*}
		(F_g)^n &\to \exp(\hat{\mathfrak{t}}^f_{g,n})\\
		\gamma_a^{(i)} &\mapsto e^{x_i^a};
	\end{align*}
	\item $\vec\alpha_\mathrm{KZ}$ restricts to a holomorphic connection on $\mathrm{Conf}_n^\mathrm{fr}(C)$;
	\item $q(\vec\alpha_\mathrm{KZ}) = \pi^*(\alpha_\mathrm{KZ})$;
	\item the connection induced by $\vec\alpha_\mathrm{KZ}$ on $\mathrm{Conf}_n^\mathrm{fr}(C)$ is flat; and
	\item the restriction to a fibre of $\pi$ is $\sum_{1\leq i\leq n}  \frac{d\log\lambda_i}{4\pi\sqrt{-1}} t_{ii}$.
\end{enumerate}
\end{theorem}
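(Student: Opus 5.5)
The first task is to show that $\vec\alpha_\mathrm{KZ}$ does not depend on the auxiliary $(n+1)$-st coordinate $q\in\tilde C$. Lemma \ref{lem:beta}(2) and Lemma \ref{lem:phi}(3) give
\[
	d^{(n+1)}\vec\alpha_\mathrm{KZ} = \sum_i \Big(\psi^i(1-g)t_{ii} + (2g-2)\psi^i\frac{t_{ii}}{2}\Big) = 0 .
\]
Hence $\vec\alpha_\mathrm{KZ}$ descends to a $1$-form on $T\tilde C^n$. Its poles along $q=p_i$ must also cancel. In $\beta_i$, the residue at $q=p_i$ comes only from the $s=0$ part of $\sum_j\Psi^{i,j}t_{ij}$ in the $p_i$-variable, together with the residues of $\omega_{ab}$. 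I will compute that this residue is a multiple of $t_{ii}$ and that it cancels against $\mathrm{res}_{\Delta'}\varphi\cdot t_{ii}/2=\frac{g-1}{2\pi\sqrt{-1}}t_{ii}$ from Lemma \ref{lem:phi}(2). The relation $\sum_a[x_i^a,y_i^a]+\sum_{j\neq i}t_{ij}=(g-1)t_{ii}$ is exactly what should make this work. I expect this to be automatic once independence of $q$ is known, since a $q$-independent form cannot have poles in $q$.

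For (1), I will use Lemma \ref{lem:beta}(1) for the $\beta_i$. The $\varphi^i$ are pulled back from $TC\times\tilde C$, so they are invariant under deck transformations in the $\vec p$-variables. They are multiplied by the central elements $t_{ii}$, so they are fixed by $e^{\ad x_j^a}$. This gives $\gamma^{(j)}_a(\vec\alpha_\mathrm{KZ})=e^{\ad x^a_j}\vec\alpha_\mathrm{KZ}$, which means $\vec\alpha_\mathrm{KZ}$ defines a connection on $\mathcal{P}^f_n$. For (3), apply $q$, which kills $t_{ii}$. Then $q(\beta_i)=\alpha_i$ by definition, and Enriquez's result says this is independent of $q$. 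For (5), restrict to a fibre of $\pi$: all $dz_i$-components vanish, and by Lemma \ref{lem:phi}(1) only $\frac{d\log\lambda_i}{2\pi\sqrt{-1}}\cdot\frac{t_{ii}}{2}$ survives. For (2), the only possible poles are on $\Delta^{(n)}$ and on the zero sections $0^{(i)}$. The latter lie outside $\mathrm{Conf}^\mathrm{fr}_n(C)$, and no other poles appear because the $q$-poles cancel and $\varphi$ is otherwise holomorphic.

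For flatness (4), $\vec\alpha_\mathrm{KZ}=\beta+\frac12\sum_i\varphi^it_{ii}$, and the $t_{ii}$-part is central, so
\[
	[\vec\alpha_\mathrm{KZ},\vec\alpha_\mathrm{KZ}] = [\beta,\beta], \qquad \beta=\sum_i\beta_i .
\]
Because of Lemma \ref{lem:phi}(1), $d\varphi^i=d^{(1)}\varphi^i=0$ in the $\vec p$ variables. Flatness therefore reduces to $d\beta=0$ and $[\beta,\beta]=0$ with coefficients in $\hat{\mathfrak t}^f_{g,n}$. Enriquez's Theorem \ref{thm:enriquez} gives these identities modulo $\bigoplus\mathbb{C}t_{ii}$. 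The obstruction is a central $2$-form $\sum_i c_i t_{ii}$, a holomorphic $(2,0)$-form on $\mathrm{Conf}_n(C)$ with possible poles on the diagonals.

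This is the main obstacle. I would handle it by evaluating at a convenient value of the auxiliary variable $q$, which we are free to choose since $\beta$ is $q$-independent up to the $\varphi$ correction. Alternatively, I would trace Enriquez's proof and record where the relation $\sum[x_i^a,y_i^a]+\sum t_{ij}=0$ was used; each such use now produces an extra $(g-1)t_{ii}$ term. I would then check that these terms assemble into $\psi^i$-type expressions that vanish by symmetry. A simpler argument may also be available. Only $dz_i\wedge dz_j$ components with $i\neq j$ occur, and the relevant $t_{ii}$ terms come from $[\ad x_i\cdots(y_i),\ad x_j\cdots(y_j)]$ brackets, which contain no $t_{ii}$ because all the relations producing $t_{ii}$ involve a single index. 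The remaining check is that $\partial_{z_j}$ of the $dz_i$-coefficient has no $t_{ii}$ component. I would verify this directly from the $q$-independence computed above, making $q\to p_j$ precise.
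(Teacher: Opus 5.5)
Your arguments for (1), (2), (3), (5) and for the $q$-independence of $\vec\alpha_\mathrm{KZ}$ are correct and agree with the paper. Your explicit residue check at $q=p_i$ is also right: $\beta_i$ contributes $-\frac{g-1}{2\pi\sqrt{-1}}t_{ii}$ via $\sum_a[x_i^a,y_i^a]+\sum_{j\neq i}t_{ij}=(g-1)t_{ii}$, and this cancels $\frac{t_{ii}}{2}\,\mathrm{res}\,\varphi^i$. As you note, that cancellation is anyway forced by $d^{(n+1)}\vec\alpha_\mathrm{KZ}=0$.

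The gap is (4). You correctly isolate the possible obstruction as an element of $\Ker q=\bigoplus_i\mathbb{C}t_{ii}$. After that you only list candidate strategies and carry out none of them, so flatness is not established.

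The missing idea for $[\vec\alpha_\mathrm{KZ},\vec\alpha_\mathrm{KZ}]=0$ is a grading on $\mathfrak{t}^f_{g,n}$. Set $\deg x_i^a=0$ and $\deg y_i^a=\deg t_{ij}=1$ for all $i,j$, including $t_{ii}$. Then:
\begin{itemize}
\item every defining relation is homogeneous, including $\sum_a[x_i^a,y_i^a]+\sum_{j\neq i}t_{ij}=(g-1)t_{ii}$;
\item every summand of $\vec\alpha_\mathrm{KZ}$ has degree $1$, since each is of the form $\ad x\cdots\ad x(y)$, $\ad x\cdots\ad x(t_{ij})$ or $\varphi^i t_{ii}$;
\item hence $[\vec\alpha_\mathrm{KZ},\vec\alpha_\mathrm{KZ}]$ is homogeneous of degree $2$, whereas $\Ker q$ lies entirely in degree $1$.
\end{itemize}
Since $q([\vec\alpha_\mathrm{KZ},\vec\alpha_\mathrm{KZ}])=\pi^*[\alpha_\mathrm{KZ},\alpha_\mathrm{KZ}]=0$ by Theorem \ref{thm:enriquez}, the bracket vanishes.

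Your heuristic that the brackets ``contain no $t_{ii}$ because the relations producing $t_{ii}$ involve a single index'' is not an argument. An element of $\mathfrak{t}^f_{g,n}$ has no canonical ``$t_{ii}$-component''; it is the degree that separates $[\vec\alpha_\mathrm{KZ},\vec\alpha_\mathrm{KZ}]$ from $\Ker q$. The freedom to choose $q$ does not by itself decide whether a central term survives either, because $[\beta,\beta]=[\vec\alpha_\mathrm{KZ},\vec\alpha_\mathrm{KZ}]$ is already independent of $q$.

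For $d\vec\alpha_\mathrm{KZ}=0$ the grading gives nothing, because $d\vec\alpha_\mathrm{KZ}$ and $t_{ii}$ both have degree $1$. Your proposed route via $q$-independence and $q\to p_j$ is left unspecified. It does not obviously bear on the identity $d^{(j)}\beta_i=d^{(i)}\beta_j$, which concerns derivatives in $p_i,p_j$ rather than in $q$.

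What you need is that these forms are explicit. $\beta_i$ depends on $p_j$ only through $\Psi^{i,j}_{a_1\dotsc a_s}$, so $d^{(j)}\beta_i=\sum_{s}\sum_{a_1,\dotsc,a_s}\psi_{a_1\dotsc a_s}(p_i,p_j)\ad x_i^{a_1}\cdots\ad x_i^{a_s}(t_{ij})$, and symmetrically for $d^{(i)}\beta_j$. Enriquez's proof that these agree (Lemma 15 of \cite{enriquez}) uses only $[x_i^a+x_j^a,t_{ij}]=0$ and $[x_i^a,x_j^b]=0$. Both hold verbatim in $\mathfrak{t}^f_{g,n}$, so the proof lifts unchanged, and the relation involving $t_{ii}$ never enters.

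Combined with $d\varphi^i=0$ from Lemma \ref{lem:phi}(1), this gives $d\vec\alpha_\mathrm{KZ}=0$. Together with the grading argument above, it gives flatness.
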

\noindent Proof. (1) We have to show that $d^{(n+1)}(\vec\alpha_\mathrm{KZ}) = 0$ and $\gamma_a^{(j)}(\vec\alpha_\mathrm{KZ}) = e^{\ad x_j^a} (\vec\alpha_\mathrm{KZ})$. The former follows from Lemmata \ref{lem:beta} (2) and \ref{lem:phi} (3). The latter follows from Lemma \ref{lem:beta} (1), the fact that $\varphi$ is $F_g$-invariant on the first coordinate, and that $t_{ii}$ is central in $\mathfrak{t}_{g,n}^f$.

\noindent (2) Regarding $\vec\alpha_\mathrm{KZ}$ as a multi-valued $1$-form on $TC^n$, the poles are contained in $\Delta^{(n)} + \sum_{1\leq i\leq n}0_{TC}^{(i)}$, which is exactly the complement of $\mathrm{Conf}_n^\mathrm{fr}(C)$ in $TC^n$. Therefore, $\vec\alpha_\mathrm{KZ}$ is holomorphic on $\mathrm{Conf}_n^\mathrm{fr}(C)$.

\noindent (3) We have $q(\vec\alpha_\mathrm{KZ}) = \sum_{1\leq i\leq n} q(\beta_i)$, but each $q(\beta_i)$ is equal to $\pi^*(\alpha_i)$ by its definition.

\noindent (4) We can show that $d^{(i)}\beta_j = d^{(j)} \beta_i$ in the same manner with Lemma 15 in \cite{enriquez} since the only relations used in the proof are $[x_i^a + x_j^a, t_{ij}] = 0$ and $[x_i^a, x_j^b] = 0$. In addition, we have $d^{(1)} \varphi= 0$ by the condition (1) in Lemma \ref{lem:phi}. This shows $d\vec\alpha_\mathrm{KZ} = 0$.

By (3) and Theorem \ref{thm:enriquez}, we have $q([\vec\alpha_\mathrm{KZ}, \vec\alpha_\mathrm{KZ}]) = \pi^*([\alpha_\mathrm{KZ}, \alpha_\mathrm{KZ}]) = 0$. This implies that $[\vec\alpha_\mathrm{KZ}, \vec\alpha_\mathrm{KZ}]$ lies in
\[
	\Ker(q\colon \mathfrak{t}^f_{g,n} \to \mathfrak{t}_{g,n}) \cong \bigoplus_{1\leq i\leq n}\mathbb{C} t_{ii}.
\]
With the grading $\deg(x^a_i) = 0$, $\deg(y^a_i) = 1$ and $\deg(t_{ij}) = 1$ on $\mathfrak{t}^f_{g,n}$, we have $\deg(\vec\alpha_\mathrm{KZ}) = 1$ and therefore $\deg([\vec\alpha_\mathrm{KZ}, \vec\alpha_\mathrm{KZ}]) = 2$. On the other hand, $\Ker q$ is contained in degree-$1$ part, so we obtain $[\vec\alpha_\mathrm{KZ}, \vec\alpha_\mathrm{KZ}] = 0$. This shows the flatness $d\vec\alpha_\mathrm{KZ} = [\vec\alpha_\mathrm{KZ}, \vec\alpha_\mathrm{KZ}] = 0$.

\noindent (5) This follows from Lemma \ref{lem:phi} (1).\qed

\begin{remark}
The above theorem does \textit{not} characterize $\vec\alpha_\mathrm{KZ}$. For example, adding any closed holomorphic $1$-form on $C^n$ with coefficients in $\bigoplus_{1\leq i\leq n}\mathbb{C}t_{ii}$ will also result in another connection satisfying (1)--(4) above. However, $\vec\alpha_\mathrm{KZ}$ is the canonical choice in the sense that it is obtained as a degeneration of the original flat connection $\alpha_\mathrm{KZ}$ and therefore \textit{operadic}; see Theorem \ref{thm:operadic}.\\
\end{remark}

\section{Degeneration of Connections: Operadicity}\label{sec:operadicity}

As noted in the introduction, the most notable feature of the universal KZ connection is the compatibility with the operadic structure of the configuration space $\mathrm{Conf}_n(\mathbb{C})$. In this section, we show that $\vec\alpha_\mathrm{KZ}^{(n)}$ also has this property.\\

We fix a Riemannian metric on $C$, and endow the covering space $\tilde C$ with the pull-back metric. Then, the action of the group of deck transformations $\Aut(\tilde C/C) \cong F_g$ on $\tilde C$ preserves the metric. Since $C$ is compact and therefore geodesically complete by the Hopf--Rinow Theorem, $\tilde C$ is also geodesically complete under the pull-back metric.

\begin{definition}
We denote the exponential map on $\tilde C$ associated with the above metric by
\[
	\mathrm{Exp}\colon T\tilde C \to \tilde C: (p,v) \mapsto \mathrm{Exp}_p(v),
\]
where $p\in \tilde C$ and $v\in T_p\tilde C$. By definition, the map
\[
	\mathbb{R}\to \tilde C\colon t\mapsto \mathrm{Exp}_p(tv)
\]
is a unique geodesic on $\tilde C$ with $\mathrm{Exp}_p(0) = p$ such that the derivative at $0\in T_p\tilde C$,
\[
	(d\mathrm{Exp}_p)_0\colon T_0T_p\tilde C \to T_p\tilde C,
\]
is the identity map via the canonical identification $T_p\tilde C \cong T_0T_p\tilde C$.
\end{definition}
Let $p\in \tilde C$, $z$ a local coordinate on $\tilde C$ near $p$, and $(z,\lambda)$ the associated coordinate of $T\tilde C$ obtained as in \eqref{eq:assoccoord}. For  a small complex parameter $\varepsilon$, we have the expansion
\begin{align}\label{eq:expansion}
	z(\mathrm{Exp}_p(\varepsilon v)) = z(p) + \varepsilon\lambda(p, v) + O(\varepsilon^2).
\end{align}
The choice of a metric only affects the $O(\varepsilon^2)$-term, and we will see below that $O(\varepsilon^2)$-term does not contribute to our calculation in the limit of $\varepsilon \to 0$.

We now move on to the operadic structure on $\mathrm{Conf}_n^\mathrm{fr}(C)$. The following map corresponds to doubling the strand labelled by $n$ on braid groups.

\begin{definition}\label{def:fvarepsilon}
Let $n\geq 0$, and $B$ a small disk in $\mathbb{C}$ centered at $0$. For $\varepsilon\in B$, we put
\begin{align*}
	f_\varepsilon\colon T\tilde C^n &\to T\tilde C^{n+1}\\
	(p_1, v_1;\dotsc;p_n, v_n)&\mapsto \big( p_1,v_1; \dotsc; p_n, v_n;\mathrm{Exp}_{p_n}(\varepsilon v_n), (d\mathrm{Exp}_{p_n})_{\varepsilon v_n}(v_n) \big),
\end{align*}
where $p_i\in \tilde C$ and $v_i\in T_{p_i}\tilde C$. 
\end{definition}

We denote by $(z_1,\lambda_1;\dotsc;z_n,\lambda_n)$ the local coordinate of $T\tilde C^n$ consisting of $n$ copies of $(z,\lambda)$ above. Then, using the expansion \eqref{eq:expansion}, we have
\begin{align*}
	f_\varepsilon^*(z_i) = z_i,\quad f_\varepsilon^*(\lambda_i) = \lambda_i,\quad f_\varepsilon^*(z_{n+1}) = z_n + \varepsilon\lambda_n + O(\varepsilon^2), \quad f_\varepsilon^*(\lambda_{n+1}) = \lambda_n + O(\varepsilon)
\end{align*}
for $1\leq i \leq n$.

\begin{lemma}
Let $(F_g)^n$ act on $\tilde C^{n+1}$ via the map
\begin{align*}
	(F_g)^n \to (F_g)^{n+1}: \gamma_a^{(i)} \mapsto \gamma_a^{(i)}\,(1\leq i<n),\quad \gamma_a^{(n)} \mapsto \gamma_a^{(n)}\gamma_a^{(n+1)}
\end{align*}
between the group of deck transformations. Then, the map $f_\varepsilon$ is $(F_g)^n$-equivariant.
\end{lemma}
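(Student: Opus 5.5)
The equivariance of $f_\varepsilon$ reduces to one fact: deck transformations of $\tilde C$ are isometries for the pull-back metric. Since $(F_g)^n$ is generated by the $\gamma_a^{(i)}$, it suffices to check the identity
\[
	f_\varepsilon\circ \gamma_a^{(i)} = \Gamma_a^{(i)}\circ f_\varepsilon
\]
for each generator. Here $\Gamma_a^{(i)}$ denotes the image of $\gamma_a^{(i)}$ in $(F_g)^{n+1}$ acting on $T\tilde C^{n+1}$. Deck transformations act on $T\tilde C$ through their differentials, so the statement is really about $T\tilde C^{n+1}$. The action on $\tilde C^{n+1}$ is what one sees after projecting.

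For $i<n$, the element $\gamma_a^{(i)}$ acts only on the $i$-th entry $(p_i,v_i)$, by $(\gamma_a(p_i), d\gamma_a(v_i))$. The extra $(n+1)$-st entry of $f_\varepsilon$ depends only on $(p_n,v_n)$. Hence both sides of the identity agree trivially.

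For $i=n$, write $\gamma=\gamma_a$. Its image $\gamma_a^{(n)}\gamma_a^{(n+1)}$ applies $\gamma$ (with $d\gamma$ on vectors) to both the $n$-th and the $(n+1)$-st entries. We must therefore show
\[
	\mathrm{Exp}_{\gamma(p)}\big(\varepsilon\, d\gamma(v)\big) = \gamma\big(\mathrm{Exp}_p(\varepsilon v)\big)
	\quad\mbox{and}\quad
	(d\mathrm{Exp}_{\gamma(p)})_{\varepsilon d\gamma(v)}\big(d\gamma(v)\big) = d\gamma\big((d\mathrm{Exp}_p)_{\varepsilon v}(v)\big).
\]
Both follow from naturality of the exponential map under isometries. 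The curve $t\mapsto\gamma(\mathrm{Exp}_p(tv))$ is a geodesic, because $\gamma$ is an isometry of the pull-back metric. It starts at $\gamma(p)$ with initial velocity $d\gamma(v)$. By uniqueness of geodesics, recalled in the definition of $\mathrm{Exp}$, it equals $t\mapsto \mathrm{Exp}_{\gamma(p)}(t\,d\gamma(v))$. This gives
\[
	\mathrm{Exp}_{\gamma(p)}\circ d\gamma_p = \gamma\circ\mathrm{Exp}_p
	\quad\mbox{on } T_p\tilde C .
\]
Differentiating this identity at the point $\varepsilon v\in T_p\tilde C$ in the direction $v$ yields the second formula. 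Here we use that $d\gamma_p$ is linear, so its differential at any point is $d\gamma_p$ itself.

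The only point needing care is the meaning of $\varepsilon v$ for complex $\varepsilon$. Regard $T_p\tilde C$ as a complex line via the complex structure, and $\mathrm{Exp}_p$ as a real map on that real plane. Since $\gamma$ is holomorphic, $d\gamma_p$ is complex linear, so $d\gamma(\varepsilon v)=\varepsilon\, d\gamma(v)$. With this, the argument above goes through unchanged, and no real obstacle remains.
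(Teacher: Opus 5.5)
Your proof is correct and follows the same route as the paper: both rest on the naturality $\gamma\circ\mathrm{Exp}_p = \mathrm{Exp}_{\gamma(p)}\circ d\gamma_p$ for deck transformations, which are isometries of the pull-back metric, and equivariance of $f_\varepsilon$ then follows directly from its definition. You also spell out the tangent-vector component (by differentiating this identity) and the complex-linearity of $d\gamma$ needed for complex $\varepsilon$, both of which the paper leaves implicit.
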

\noindent Proof. Since $F_g$ acts on $\tilde C$ preserving the metric, we have
\[
	\gamma(\mathrm{Exp}_p(v)) = \mathrm{Exp}_{\gamma(p)}((d\gamma)_p(v))
\]
for any $\gamma \in F_g$, $p\in \tilde C$ and $v\in T_p\tilde C$. Now the claim follows from the definition of $f_\varepsilon$.\qed\\

We have the morphism of Lie algebras $(-)^{1,\dotsc, n-1, n(n+1)}\colon \mathfrak{t}_{g,n}^f \to \mathfrak{t}_{g,n+1}^f$ induced by the operadic composition map on a family $\{\mathfrak{t}_{g,n}^f\}_{n\geq 0}$. This is described by
\begin{align*}
	t_{ij} &\mapsto t_{ij}\\
	t_{in} &\mapsto t_{in} + t_{i,n+1}\\
	t_{nn} &\mapsto t_{nn} + 2t_{n,n+1} + t_{n+1, n+1}\\
	x_i^a &\mapsto x_i^a\\
	x_n^a &\mapsto x_n^a + x_{n+1}^a\\
	y_i^a &\mapsto y_i^a\\
	y_n^a &\mapsto y_n^a + y_{n+1}^a
\end{align*}
for $1\leq i,j < n$ and $1\leq a\leq g$. Recall that the operadic composition map on $\{\mathfrak{t}_{g,n}^f\}_{n\geq 0}$ was induced by that of braids on a genus $g$ surface, and the above map $(-)^{1,\dotsc, n-1, n(n+1)}$ corresponds to the doubling of the $n$-th strand described by the map $f_\varepsilon$.

Then, the operadicity of $\vec\alpha_\mathrm{KZ}^{(n)}$ is stated as follows. Recall that $\vec{\alpha}_\mathrm{KZ}^{(n)}$ is a meromorphic $1$-form on $T\tilde C^n$ by Theorem \ref{thm:kzb} (1).

\begin{theorem}\label{thm:conv}
Let $n\geq 0$ and $\mathrm{S}_n$ the symmetric group of degree $n$. Furthermore, let $B$ be a small disk in $\mathbb{C}$ centered at $0$ as in Definition \ref{def:fvarepsilon}, and put $B^\times = B\setminus \{0\}$.
\begin{enumerate}[(1)]
	\item The $1$-form $\vec{\alpha}_\mathrm{KZ}^{(n)}$ is invariant under the action of $\mathrm{S}_n$ on $T\tilde C^n$ by the permutation of points.
	\item The family of $1$-forms $\{f_\varepsilon^*(\vec{\alpha}_\mathrm{KZ}^{(n+1)})\}_{\varepsilon\in B^\times}$ on $T\tilde C^n$ converges to $(\vec{\alpha}_\mathrm{KZ}^{(n)})^{1,\dotsc, n-1, n(n+1)}$ as $\varepsilon \to 0$.
\end{enumerate}
\end{theorem}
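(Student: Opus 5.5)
Part (1) is a direct check on the defining formula of Definition~\ref{def:conn}. A permutation $\sigma\in\mathrm{S}_n$ acts on $T\tilde C^n$ by permuting the factors and on $\hat{\mathfrak{t}}^f_{g,n}$ by relabelling the generators $t_{ij}, x_i^a, y_i^a$; the defining relations are symmetric, so this is a Lie algebra automorphism. Each summand $\beta_i+\varphi^i t_{ii}/2$ is built from forms that depend only on $\vec p_i$, on $p_j$ through $\Psi^{i,j}$, and on the auxiliary point $q$. Hence $\sigma$ simply sends the $i$-th summand to the $\sigma(i)$-th summand, and the full sum is invariant. The auxiliary variable $q$ is untouched; by Theorem~\ref{thm:kzb}(1) the form does not depend on it anyway.

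For part (2), I would fix the auxiliary point $q\in\mathrm{Int}(\mathcal D)$ away from $p_n$, which is legitimate since the form is independent of $q$. I would then split $f_\varepsilon^*\vec\alpha^{(n+1)}_{\mathrm{KZ}}$ into its summands and expand in $\varepsilon$.

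\textbf{Summands with $i<n$.} The only $\varepsilon$-dependence is in $\Psi^{i,n+1}_{a_1\dots a_s}(p_i,q,p_{n+1})$ with $p_{n+1}\to p_n$. The combination $\Psi^{i,n}\,\ad\cdots(t_{in})+\Psi^{i,n+1}\,\ad\cdots(t_{i,n+1})$ converges to $\Psi^{i,n}\,\ad\cdots(t_{in}+t_{i,n+1})$, which is the image of the corresponding term under $(-)^{1,\dots,n-1,n(n+1)}$. The remaining terms are unaffected, using $x_i\mapsto x_i$ and $y_i\mapsto y_i$.

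\textbf{Summands $i=n$ and $i=n+1$.} These two together must tend to the image of $\beta_n+\varphi^n t_{nn}/2$, namely
\[
\sum\omega^n_{a_1\dots a_sb}\,\ad(x_n+x_{n+1})^{a_1}\cdots(y_n^b+y_{n+1}^b)+\sum_{j<n}\Psi^{n,j}\cdots(t_{nj}+t_{n+1,j})+\varphi^n\frac{t_{nn}+2t_{n,n+1}+t_{n+1,n+1}}{2}.
\]
Pulling back the $\omega$-terms and the $\Psi^{n,j}$, $\Psi^{n+1,j}$ terms with $j<n$ by $f_\varepsilon$ gives the same coefficient forms at $p_n$ in the limit. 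Taking the $dz_n$ and $d\lambda_n$ components of $dz_{n+1}=dz_n+\varepsilon\,d\lambda_n+\lambda_n\,d\varepsilon+O(\varepsilon)$, this contributes $\omega^n\,\ad x_n\cdots y_n+\omega^n\,\ad x_{n+1}\cdots y_{n+1}$. The cross terms that would match $\ad(x_n+x_{n+1})$ should come from the singular mutual terms $\Psi^{n,n+1}$ and $\Psi^{n+1,n}$, which blow up like $dz_n/(z_n-z_{n+1})\sim dz/(\varepsilon\lambda_n)$. Here I would use the relations $[x_n^a+x_{n+1}^a,t_{n,n+1}]=0$ and $\ad x_{n+1}^a(t_{n,n+1})=-\ad x_n^a(t_{n,n+1})$ together with $[x_n^a,y_{n+1}^b]=\delta_{ab}t_{n,n+1}$. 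These rewrite $\ad x_n^{a_1}\cdots\ad x_n^{a_s}$ acting on $y_n+y_{n+1}$ so that the difference from the naive sum is exactly a series in $\ad x_n$ applied to $t_{n,n+1}$.

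\textbf{Main obstacle.} I expect the hardest step to be the cancellation of the $1/\varepsilon$ poles and the identification of the finite part. Write $\Psi^{n,n+1}_{a_1\dots a_s}(p_n,q,p_{n+1})+\Psi^{n+1,n}_{a_1\dots a_s}(p_{n+1},q,p_n)$ along $f_\varepsilon$, using the symmetric leading term $\frac{1}{2\pi\sqrt{-1}}\big(\frac{dz_n}{z_n-z_{n+1}}+\frac{dz_{n+1}}{z_{n+1}-z_n}\big)$ from \eqref{eq:bidiff}. The divergent parts should cancel because $t_{n,n+1}$ is symmetric and $\ad x_n^{a}t_{n,n+1}=-\ad x_{n+1}^at_{n,n+1}$. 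For $s=0$, the surviving finite piece involves $d\log(z_{n+1}-z_n)=d\log(\varepsilon\lambda_n)+O(\varepsilon)$, which along a fixed $\varepsilon$ restricts to $d\log\lambda_n$. Combined with the regular part of $\Psi$ at the diagonal, this has to reproduce $\varphi^n\cdot t_{n,n+1}$. I would show that the regular part is $d\log\lambda_n/2\pi\sqrt{-1}$ plus a form with the characterising properties of Lemma~\ref{lem:phi}: the correct residue $\frac{2g-2}{2\pi\sqrt{-1}}$, the correct $d^{(2)}$, and vanishing $\vec{\mathcal A}_a$-periods via Lemma~\ref{lem:integral}. The uniqueness part of Lemma~\ref{lem:phi} then identifies it. Likewise, $\varphi^{n+1}t_{n+1,n+1}/2\to\varphi^n t_{n+1,n+1}/2$ directly, since $\lambda_{n+1}\to\lambda_n$. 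Finally, the $O(\varepsilon^2)$ ambiguity of $\mathrm{Exp}$ only contributes $O(\varepsilon)$ after division by $\varepsilon$, so it vanishes in the limit.
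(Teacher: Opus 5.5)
Your part (1) and your treatment of the summands with $i<n$ agree with the paper. The gap is in the summands $i=n,n+1$: your accounting of which terms produce what is already wrong in degree $2$ (grading $\deg x=\deg y=1$, $\deg t=2$).

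\textbf{The degree-$2$ mismatch.} At $s=1$ the target $(\beta_n^{(n)})^{1,\dotsc,n-1,n(n+1)}$ contains the cross term
\[
\sum_{a,b}\omega^n_{ab}\big([x_n^a,y_{n+1}^b]+[x_{n+1}^a,y_n^b]\big)=2\sum_a\omega^n_{aa}\,t_{n,n+1}.
\]
All other degree-$2$ terms of the limit match the target termwise, except the $s=0$ mutual term. Hence the limit of $f_\varepsilon^*(\Psi^{n,n+1}+\Psi^{n+1,n})$ must be $\varphi^n+2\sum_a\omega^n_{aa}$, not $\varphi^n$. You can see this directly from \eqref{eq:dlambda}:
\begin{itemize}
\item its residue at $p_n=q$ is $-\frac{2}{2\pi\sqrt{-1}}$, not $\frac{2g-2}{2\pi\sqrt{-1}}$;
\item its $d^{(n+1)}$ is $-2\psi^n$, not $(2g-2)\psi^n$;
\item its $\vec{\mathcal A}_a$-periods are $-1$, not $0$ (this is the $-t_{n,n+1}$ of Lemma~\ref{lem:int}).
\end{itemize}
These agree with the properties of $\varphi$ only when $g=0$. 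So the properties you plan to verify in order to invoke the uniqueness in Lemma~\ref{lem:phi} are false for $g\geq 1$, and that step fails.

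\textbf{The higher mutual terms.} The mutual terms with $s\geq 1$ are not singular. By \eqref{eq:residue}, $\Psi_{a_1\dotsc a_s}$ is holomorphic near $\{q_0=q_2\}$, so there is no $1/\varepsilon$ to cancel. These terms simply converge to
\[
\Psi_{a_1\dotsc a_s}(p_n,q,p_n)\big(\ad x_n^{a_1}\cdots\ad x_n^{a_s}+(-1)^s\ad x_n^{a_s}\cdots\ad x_n^{a_1}\big)(t_{n,n+1}),
\]
which vanishes for $s=1$. Matching these with the higher cross terms from $\ad(x_n+x_{n+1})$ is an infinite family of nontrivial identities between the $\omega_{a_1\dotsc a_sb}$ and the diagonal values of the $\Psi_{a_1\dotsc a_s}$. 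Your plan gives no mechanism for them.

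\textbf{What is missing.} You need to characterise the whole $\hat{\mathfrak t}^f_{g,n+1}$-valued limit $\eta^{(n)}$, not a scalar piece of it. Lemma~\ref{lem:eta} does this by setting $P=(\beta_n^{(n)})^{1,\dotsc,n-1,n(n+1)}+\varphi^n t_{n,n+1}-\eta^{(n)}$ and showing that $P$:
\begin{itemize}
\item satisfies $d^{(n+1)}P=0$;
\item has the twisted equivariance;
\item has no poles;
\item has vanishing $\vec{\mathcal A}_a$-periods.
\end{itemize}
Its lowest-degree component is then an invariant holomorphic form with zero $A$-periods, hence zero.

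The essential input is the period computation of Lemma~\ref{lem:int}. Lemma~\ref{lem:integral} cannot be applied directly, because both points move. Instead one splits the loop $p\mapsto (p,\mathrm{Exp}_p(\varepsilon v))$ into $\ell_1+\ell_3$. In $\ell_3$ the fixed point $\mathrm{Exp}_{\mathbf p}(-\varepsilon\mathbf v)$ lies outside $\mathcal D$, and transporting it back introduces $e^{-\ad x_n^a}$. This produces exactly the $-t_{n,n+1}$ needed to match $\big(\frac{\ad x_n^a}{e^{\ad x_n^a}-1}(y_n^a)\big)^{1,\dotsc,n-1,n(n+1)}$.
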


We note that this is a refinement of the formula for the simplicial behaviour (Proposition 13 of \cite{enriquez}). Before we proceed to the proof, we have to clarify in what sense this family of $1$-forms converges.
\begin{definition}
Let $X$ be a complex manifold, and $\{(\omega_\varepsilon, S_\varepsilon)\}_{\varepsilon \in B^\times}$ a family, where $S_\varepsilon$ is a subset of $X$, and $\omega_\varepsilon$ is a $C^\infty$ $1$-form defined on $X\setminus S_\varepsilon$ with coefficients in $\mathbb{C}$ for each $\varepsilon \in B^\times$.
\begin{itemize}
	\item Let $S$ be a subset of $X$. We say that the family $\{(\omega_\varepsilon, S_\varepsilon)\}_{\varepsilon \in B^\times}$ \textit{converges outside} $S$ as $\varepsilon \to 0$, if, for each $p\in X\setminus S$ and any compact neighbourhood $U\subset X\setminus S$ of $p$, there exist:
	\begin{enumerate}[(1)]
		\item $r>0$ such that $U \cap S_\varepsilon = \varnothing$ for $|\varepsilon| < r$, and
		\item (at least) continuous $1$-forms $\omega_{0, U}$ and $\sigma_{\varepsilon, U}$ on $U$ with the latter continuously depend on $\varepsilon \in B$, such that
		\[
			\omega_\varepsilon|_U = \omega_{0, U} + \varepsilon\sigma_{\varepsilon, U}
		\]
		for any $\varepsilon \in B^\times$ with $|\varepsilon| < r$.
	\end{enumerate}
	Such $\omega_{0, U}$ is unique, and it defines a $1$-form on $X\setminus S$, which we denote by $\lim_{\varepsilon \to 0} \omega_\varepsilon$. We also use the symbol $O(\varepsilon)$ for the term $\varepsilon\sigma_{\varepsilon, U}$.
	\item Let $\omega$ be a meromorphic $1$-form on $X$, and $\mathrm{sing}(\omega)$ its singular locus. We say that the above family $\{(\omega_\varepsilon, S_\varepsilon)\}_{\varepsilon \in B^\times}$ \textit{converges to} $\omega$ as $\varepsilon \to 0$, if $\{(\omega_\varepsilon, S_\varepsilon)\}_{\varepsilon \in B^\times}$ converges outside $\mathrm{sing}(\omega)$ and, moreover, $\lim_{\varepsilon \to 0} \omega_\varepsilon = \omega$ holds pointwisely on $X\setminus \mathrm{sing}(\omega)$.
\end{itemize}
Let $\mathfrak{g}$ be a graded Lie algebra over $\mathbb{C}$, and suppose that the degree-$d$ part $\mathfrak{g}_d$ is finite-dimensional for each $d$.
\begin{itemize}
	\item We say that a family $\{(\omega'_\varepsilon, S_\varepsilon)\}_{\varepsilon \in B^\times}$ of above $1$-forms on $X$ but with coefficients in $\mathfrak g$ converges (either outside $S\subset X$ or to some $\mathfrak{g}$-valued meromorphic $1$-form $\omega'$), if, for any $d$, in the degree-$d$ part of $\{(\omega_\varepsilon, S_\varepsilon)\}_{\varepsilon \in B^\times}$, each coeffieicient with respect to some basis of $\mathfrak{g}_d$ converges in the above sense.\\
\end{itemize}
\end{definition}

We shall do some preparation for the proof of Theorem \ref{thm:conv}. We denote
\[
	f_\varepsilon\times \id_{\tilde C}\colon T\tilde C^n \times \tilde C\to T\tilde C^{n+1} \times \tilde C
\]
again by $f_\varepsilon$. Note that $f_0$ is a holomorphic map. Let $\omega$ be a meromorphic $1$-form on $T\tilde C^{n+1} \times \tilde C$. Whenever we consider the family $\{f^*_\varepsilon(\omega)\}_{\varepsilon \in B^\times}$ on $T\tilde C^n \times \tilde C$, we will always set $S_\varepsilon = f_\varepsilon^{-1}(\mathrm{sing}(\omega))$ and drop $S_\varepsilon$ from the notation.

\begin{lemma}\label{lem:holconv}
Let $\omega$ be a meromorphic $1$-form on $T\tilde C^{n+1} \times \tilde C$, and $p\in T \tilde C^n \times \tilde C$. If $f_0(p)\notin \mathrm{sing}(\omega)$, then $\{f^*_\varepsilon(\omega)\}_{\varepsilon \in B^\times}$ converges to the meromorphic $1$-form $f_0^*(\omega)$ in some neighbourhood $U$ of $p$.
\end{lemma}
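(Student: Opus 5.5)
Since $\mathrm{sing}(\omega)$ is a closed analytic subset of $T\tilde C^{n+1}\times\tilde C$ and $f_0(p)\notin\mathrm{sing}(\omega)$, I will first pick an open neighbourhood $V$ of $f_0(p)$ with compact closure disjoint from $\mathrm{sing}(\omega)$, on which $\omega$ is holomorphic. The map $(\varepsilon,x)\mapsto f_\varepsilon(x)$ is jointly smooth in $(\varepsilon, x)\in B\times T\tilde C^n\times\tilde C$. This holds because the exponential map of a complete Riemannian metric is smooth on the whole tangent bundle, and so is its differential $(d\mathrm{Exp}_{p_n})_{\varepsilon v_n}(v_n)$. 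By continuity there is therefore a compact neighbourhood $U$ of $p$ and $r>0$ with $f_\varepsilon(U)\subset V$ for $|\varepsilon|<r$. This gives condition (1) of the definition of convergence, namely $U\cap S_\varepsilon=\varnothing$ for small $\varepsilon$.

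For condition (2), I will write $f_\varepsilon^*(\omega)$ on $U$ in local coordinates. In the coordinates $(z_1,\lambda_1;\dotsc;z_{n+1},\lambda_{n+1};w)$ near $f_0(p)$, write $\omega=\sum_k g_k\,du_k$ with $g_k$ holomorphic and $u_k$ ranging over the coordinates. Then
\[
f_\varepsilon^*(\omega)=\sum_k (g_k\circ f_\varepsilon)\, d(u_k\circ f_\varepsilon),
\]
where $d$ is the differential on $U$ only, with $\varepsilon$ treated as a parameter. Each coefficient, viewed as a function of $(\varepsilon,x)$, is smooth on $B_r\times U$. Applying Hadamard's lemma in the $\varepsilon$-variable (Taylor's formula with integral remainder, treating $\varepsilon$ as a real $2$-dimensional parameter, which is allowed since $\mathrm{Exp}$ need not be holomorphic) writes each coefficient $h(\varepsilon,x)$ as $h(0,x)+\mathrm{Re}(\varepsilon)\,h_1(\varepsilon,x)+\mathrm{Im}(\varepsilon)\,h_2(\varepsilon,x)$ with $h_1,h_2$ continuous. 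This can be rewritten as $h(0,x)+\varepsilon\,\sigma(\varepsilon,x)$ with
\[
\sigma=\bar\varepsilon\,(\mathrm{Re}(\varepsilon)h_1+\mathrm{Im}(\varepsilon)h_2)/|\varepsilon|^2,
\]
which is continuous and bounded for $\varepsilon\neq0$. To get continuity at $\varepsilon=0$ as well, I will instead do the Taylor expansion in the complex-linear form: $h(\varepsilon,x)-h(0,x)=\int_0^1 \frac{d}{dt}h(t\varepsilon,x)\,dt=\varepsilon A+\bar\varepsilon B$.

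The point I expect to be the main (minor) obstacle is exactly this $\bar\varepsilon B$ term, since the definition demands the form $\varepsilon\sigma_{\varepsilon,U}$ with $\sigma$ continuous on all of $B$. I will handle it by checking the explicit structure: $f_\varepsilon$ depends on $\varepsilon$ only through $\mathrm{Exp}_{p_n}(\varepsilon v_n)$ and $(d\mathrm{Exp}_{p_n})_{\varepsilon v_n}(v_n)$, and these are smooth functions of the tangent vector $\varepsilon v_n$. Differentiating in $t$ gives $\varepsilon\cdot(\text{derivative along }v_n)+\bar\varepsilon\cdot(\text{antiholomorphic derivative})$. If the definition is read with real-smooth dependence, setting $\sigma=A+(\bar\varepsilon/\varepsilon)B$ fails continuity at $0$. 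So I expect the intended reading is that the remainder is $O(|\varepsilon|)$ with a continuous bounded factor, matching the paper's usage of the symbol $O(\varepsilon)$; I would state this convention explicitly.

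Finally, setting $\varepsilon=0$ gives $h(0,x)$, and these are precisely the coefficients of $f_0^*(\omega)$. Here $f_0(p_1,v_1;\dotsc;p_n,v_n;q)=(p_1,v_1;\dotsc;p_n,v_n;p_n,v_n;q)$ is holomorphic, so $f_0^*(\omega)$ is meromorphic, and in fact holomorphic on $U$. Hence the limit equals $f_0^*(\omega)$ on $U$, which proves the convergence to $f_0^*(\omega)$ near $p$.
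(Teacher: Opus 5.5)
Your argument is the paper's: joint continuity of $(\varepsilon,x)\mapsto f_\varepsilon(x)$ and compactness give $U$ and $r$ with $f_\varepsilon(U)$ disjoint from $\mathrm{sing}(\omega)$, and a first-order expansion in $\varepsilon$ gives $f_\varepsilon^*(\omega)=f_0^*(\omega)+O(\varepsilon)$, which is all the paper writes. Your worry about the $\bar\varepsilon B$ term is legitimate and the paper does not address it (your looser reading is harmless for how the limit is used later), but you can also keep the strict definition by taking the metric compatible with the complex structure: then only $\Gamma^z_{zz}$ and its conjugate are nonzero, so $z\circ\mathrm{Exp}_p$ has $\mathbb{C}$-linear first-order and $\mathbb{C}$-quadratic second-order Taylor terms, hence $\partial_{\bar\varepsilon}$ of $z_{n+1}\circ f_\varepsilon$ and $\lambda_{n+1}\circ f_\varepsilon$ (and of their $x$-derivatives) vanishes at $\varepsilon=0$, so $B(0,x)=0$ and $\sigma=A+(\bar\varepsilon/\varepsilon)B$ extends continuously to $\varepsilon=0$ by $A(0,x)$.
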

\noindent Proof. Consider the continuous map
\begin{align*}
	f\colon B\times T\tilde C^n \times \tilde C &\to T\tilde C^{n+1} \times \tilde C\\
	(\varepsilon, p) &\mapsto  f_\varepsilon(p).
\end{align*}
Since $\mathrm{sing}(\omega) \subset T\tilde C^{n+1} \times \tilde C$ is closed, $f^{-1}((T\tilde C^{n+1} \times \tilde C)\setminus \mathrm{sing}(\omega))$ is open so that we can take a neighbourhood $\{\varepsilon\in B \colon |\varepsilon| < r\} \times U$ of $(0, p)$ in $B\times (T\tilde C^n \times \tilde C)$ for some $r > 0$ and $U$ compact. Then, we have $U \cap S_\varepsilon = \varnothing$ for $|\varepsilon| < r$, and $f_\varepsilon^*(\omega)$ is of class $C^\infty$ on $U$. Then, we have
\begin{align*}
	f_\varepsilon^*(\omega) = f_0^*(\omega) + O(\varepsilon)
\end{align*}
on $U$ using the expansion \eqref{eq:expansion}, where $O(\varepsilon)$-term is estimated in terms of first derivatives of $f_0^*(\omega)$; this was our definition of the convergence. \qed\\

Denote by $\Delta_{i,j}^{(n)}$ the $(i,j)$-diagonal of $C^n$, and put $\tilde \Delta_{i,j}^{(n)} = (\varpi^n)^{-1}(\Delta_{i,j}^{(n)})$ using the projection $\varpi^n\colon \tilde C^n \to C^n$.

\begin{lemma}\label{lem:degen}
Let $\beta_i^{(n)}$ be the meromorphic $1$-form on $\tilde C^{n+1}$ in Definition \ref{def:beta}, which we naturally regard to be defined on $T\tilde C^n \times \tilde C$ via the pull-back by $T\tilde C^n \times \tilde C \to \tilde C^{n+1}$. 
\begin{enumerate}[(1)]
	\item Let $D = 0_{T\tilde C}^{(n)} + \sum_{1\leq i< n}\tilde \Delta^{(n+1)}_{i,n} + \sum_{1\leq i\leq n}\tilde \Delta^{(n+1)}_{i,n+1}$ be a divisor of $T\tilde C^n \times\tilde C$. Then, the family of $1$-forms
	\[
		\{f_\varepsilon^*(\beta_n^{(n+1)} + \beta_{n+1}^{(n+1)})\}_{\varepsilon\in B^\times}
	\]
	on $T\tilde C^n \times\tilde C$ converges outside $D$ as $\varepsilon \to 0$. We denote the limit by $\eta^{(n)}$.
	\item $\eta^{(n)}$ is, moreover, a meromorphic $1$-form on $T \tilde C^n \times\tilde C$ with the divisor $D$:
	\[
		\eta^{(n)} \in H^0\Big(T \tilde C^n \times \tilde C, ((\Omega_{T\tilde C}^{1,0})^{(n)}\boxtimes \mathcal{O}_{\tilde C})\otimes \hat{\mathfrak{t}}_{g,n}^f (D)\Big).
	\]
	\item $\mathrm{res}_{p_i = p_n}\eta^{(n)}(\vec p_1,\dotsc, \vec p_n; q) = \frac{t_{in} + t_{i,n+1}}{2\pi\sqrt{-1}}$ for $1\leq i < n$, and $\mathrm{res}_{0^{(n)}_{T\tilde C}}\eta^{(n)} = \frac{t_{n,n+1}}{2\pi\sqrt{-1}}$.
	\item $\gamma_a^{(i)}(\eta^{(n)}) = (e^{\ad x^a_i})^{1,\dotsc, n-1, n(n+1)}(\eta^{(n)})$ for any $1\leq i\leq n$ and $1\leq a\leq g$.
	\item $d^{(n+1)}\eta^{(n)} = \psi^n\cdot (1-g)(t_{nn} + t_{n+1, n+1})$.
\end{enumerate} 
\end{lemma}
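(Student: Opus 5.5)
The approach is to split $\beta_n^{(n+1)} + \beta_{n+1}^{(n+1)}$ into terms that are regular along the collision $p_{n+1}\to p_n$ and the single singular term coupling $n$ and $n+1$. Everything except the $t_{n,n+1}$-terms is then handled by Lemma \ref{lem:holconv}.

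\textbf{Step 1 (decomposition).} Write
\[
\beta_n^{(n+1)}+\beta_{n+1}^{(n+1)} = R + \sum_{s\geq 0}\sum_{a_1,\dotsc,a_s}\Big(\Psi^{n,n+1}_{a_1\dotsc a_s}\ad x_n^{a_1}\cdots\ad x_n^{a_s}(t_{n,n+1}) + \Psi^{n+1,n}_{a_1\dotsc a_s}\ad x_{n+1}^{a_1}\cdots\ad x_{n+1}^{a_s}(t_{n,n+1})\Big).
\]
Here $R$ collects the $\omega^i$-terms and the $\Psi^{i,j}$-terms with $j\le n-1$.

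\textbf{Step 2 (the regular part $R$).} The forms in $R$ have poles only along $\tilde\Delta_{n,n+1}$-free loci: $p_i=p_j$ with $j<n$, or $p_i=q$. Their $f_0$-images therefore avoid the singular set outside $D$. By Lemma \ref{lem:holconv}, $f_\varepsilon^*R$ converges to $f_0^*R$. Because $f_0$ sets $p_{n+1}=p_n$, the $\omega$-terms and $\Psi^{\cdot,j}$-terms of points $n$ and $n+1$ combine into the images under $(-)^{1,\dotsc,n-1,n(n+1)}$ of the corresponding pieces of $\beta_n^{(n)}$, plus extra terms. The relation $[x_n^a+x_{n+1}^a,t_{n,n+1}]=0$ is used here to move the adjoint actions.

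\textbf{Step 3 (the singular part; main obstacle).} Each $\Psi^{n,n+1}_{a_1\dotsc a_s}(p_n,q,p_{n+1})$ for $s\ge1$ is holomorphic near $p_n=p_{n+1}$ by \eqref{eq:residue}, so it is again handled by Lemma \ref{lem:holconv}. The $s=0$ pieces $\Psi(p_n,q,p_{n+1})\,dz_n$ and $\Psi(p_{n+1},q,p_n)\,dz_{n+1}$ have the expansion \eqref{eq:bidiff}. Their sum is
\[
\frac{1}{2\pi\sqrt{-1}}\frac{dz_n-dz_{n+1}}{z_n-z_{n+1}}\,t_{n,n+1} + \text{(regular)}.
\]
Pulling back along $f_\varepsilon$ gives $z_{n+1}-z_n=\varepsilon\lambda_n+O(\varepsilon^2)$ and $dz_{n+1}-dz_n=\varepsilon\,d\lambda_n+O(\varepsilon^2)$ plus $O(\varepsilon^2)$ differentials. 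Hence the pullback tends to $\frac{d\log\lambda_n}{2\pi\sqrt{-1}}t_{n,n+1}$. The $O(\varepsilon^2)$ terms contribute $O(\varepsilon)$ after division by $\varepsilon\lambda_n$, so convergence holds outside $0^{(n)}_{T\tilde C}$; this is the metric-independence remark.

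This is the delicate point. One must check that the remainder is continuous in $\varepsilon$ uniformly on compacts away from $D$. I plan to do this by writing $\Psi(p,q,p')$ as $\frac{1}{2\pi\sqrt{-1}}\big(\frac{dz}{z-z'}-\frac{dz}{z-w}\big)+h$ with $h$ holomorphic, and then using a Taylor expansion of $h$ along the diagonal. This proves (1) and (2): the limit is $f_0^*$ of holomorphic data plus $d\log\lambda_n$, so it is meromorphic with poles in $D$.

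\textbf{Step 4 (the remaining items).} For (3), residues along $p_i=p_n$ come from $\Psi^{n,i}$ and $\Psi^{n+1,i}$ evaluated at $p_{n+1}=p_n$, giving $t_{in}+t_{i,n+1}$. The residue along $0^{(n)}$ is read off from Step 3. For (4), I use Lemma \ref{lem:beta} (1) together with the equivariance lemma for $f_\varepsilon$, which sends $\gamma^{(n)}_a$ to $\gamma_a^{(n)}\gamma_a^{(n+1)}$. Commuting $x_n^a$ with $x_{n+1}^a$ gives $e^{\ad(x_n^a+x_{n+1}^a)}$, and this passes to the limit. For (5), $d^{(n+1)}$ commutes with $f_\varepsilon^*$ because $f_\varepsilon$ does not touch $q$. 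Then Lemma \ref{lem:beta} (2) gives $f_\varepsilon^*\big((1-g)(\psi^n t_{nn}+\psi^{n+1}t_{n+1,n+1})\big)$, whose limit is $\psi^n(1-g)(t_{nn}+t_{n+1,n+1})$ by Lemma \ref{lem:holconv}. The limit commutes with $d^{(n+1)}$ because the convergence is locally uniform, with holomorphic dependence in $q$.
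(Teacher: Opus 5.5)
Your proposal is correct and is essentially the paper's argument: Lemma \ref{lem:holconv} handles every term except the two $s=0$ coefficients of $t_{n,n+1}$, whose local expansion via \eqref{eq:bidiff} produces the $d\log\lambda_n$ pole. Items (3)--(5) then follow, as in the paper, from reading off residues, from the $(F_g)^n$-equivariance of $f_\varepsilon$ together with Lemma \ref{lem:beta}~(1), and from Lemma \ref{lem:beta}~(2); your justification for commuting $d^{(n+1)}$ with the limit (locally uniform convergence, holomorphic in $q$) is a detail the paper omits.

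The side remark in Step 2 about recombining terms into $(\beta_n^{(n)})^{1,\dotsc,n-1,n(n+1)}$ is not needed for this lemma. That identification is the content of Lemma \ref{lem:eta}, which the paper proves separately by a uniqueness argument.
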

\noindent Proof. (1) By the definition of $\beta_i^{(n+1)}$, we have
\begin{align*}
	f_\varepsilon^*(\beta_n^{(n+1)} + \beta_{n+1}^{(n+1)}) &= \sum_{s\geq 0} \sum_{1\leq a_1,\dotsc a_s, b\leq g} f_\varepsilon^*(\omega_{a_1\dotsc a_s b}^n) \ad x_n^{a_1} \cdots \ad x_n^{a_s}(y_n^b)\\
	&\qquad + \sum_{\substack{1\leq j\leq n+1\\j\neq n}} \sum_{s\geq 0} \sum_{1\leq a_1,\dotsc a_s \leq g} f_\varepsilon^*(\Psi^{n,j}_{a_1\dotsc a_s}) \ad x_n^{a_1} \cdots \ad x_n^{a_s}(t_{nj})\\
	&\quad + \sum_{s\geq 0} \sum_{1\leq a_1,\dotsc a_s, b\leq g} f_\varepsilon^*(\omega_{a_1\dotsc a_s b}^{n+1}) \ad x_{n+1}^{a_1} \cdots \ad x_{n+1}^{a_s}(y_{n+1}^b)\\
	&\qquad + \sum_{\substack{1\leq j\leq n+1\\j\neq {n+1}}} \sum_{s\geq 0} \sum_{1\leq a_1,\dotsc a_s \leq g} f_\varepsilon^*(\Psi^{{n+1},j}_{a_1\dotsc a_s}) \ad x_{n+1}^{a_1} \cdots \ad x_{n+1}^{a_s}(t_{{n+1},j}).
\end{align*}
We examine the convergence outside $D$, so take $p = ((\vec p_1,\dotsc, \vec p_n; q)\in (T\tilde C^n \times \tilde C)\setminus D$ and put $\vec p_i = (p_i, v_i)$. We first check to which terms we can apply Lemma \ref{lem:holconv}.
\begin{itemize}
	\item As for the first term, we have $\mathrm{sing}(\omega_{a_1\dotsc a_s b}^n) = \tilde\Delta^{(n+2)}_{n, n+2} \subset T\tilde C^{n+1}\times\tilde C$ by its definition \eqref{eq:omegadef}. Suppose that
	\[
		f_0(p) = (\vec p_1,\dotsc, \vec p_n, \vec p_n ;q) \in \tilde\Delta^{(n+2)}_{n, n+2}
	\]
	holds. Then, we have $(p_n, q) \in \tilde\Delta^{(2)}_{1, 2}$ and hence $p\in \tilde\Delta^{(n+1)}_{n, n+1} \subset D$, which contradicts to $p\notin D$. We conclude $f_0(p)\notin \mathrm{sing}(\omega_{a_1\dotsc a_s b}^n)$. By Lemma \ref{lem:holconv}, we have $f_\varepsilon^*(\omega_{a_1\dotsc a_s b}^n)\to f_0^*(\omega_{a_1\dotsc a_s b}^n)$ near $p$.
	\item Similarly, as for the third term, we have $\mathrm{sing}(\omega_{a_1\dotsc a_s b}^{n+1}) = \tilde\Delta^{(n+2)}_{n+1, n+2} \subset T\tilde C^{n+1}\times\tilde C$. If
	\[
		f_0(p) = (\vec p_1,\dotsc, \vec p_n, \vec p_n ;q) \in \tilde\Delta^{(n+2)}_{n+1, n+2}
	\]
	holds, then, we have $(p_n, q) \in \tilde\Delta^{(2)}_{1, 2}$ and hence $p\in \tilde\Delta^{(n+1)}_{n, n+1} \subset D$, which contradicts to $p\notin D$. We conclude $f_0(p)\notin \mathrm{sing}(\omega_{a_1\dotsc a_s b}^{n+1})$ and therefore $f_\varepsilon^*(\omega_{a_1\dotsc a_s b}^{n+1})\to f_0^*(\omega_{a_1\dotsc a_s b}^{n+1})$ near $p$.
	\item As for the second term, we first consider the case $1\leq j<n$. Suppose $f_0(p)\in \mathrm{sing}(\Psi^{n,j}_{a_1\dotsc a_s})$, which is further contained in
	\[
		\tilde\Delta^{(n+2)}_{j, n+2} + \tilde\Delta^{(n+2)}_{j, n} + \tilde\Delta^{(n+2)}_{n, n+2}
	\]
	by \eqref{eq:omegadef}. Similar to the above, assuming that $f_0(p)$ is in any of these three yields the contradiction to $p\notin D$. In this case, we have $f_\varepsilon^*(\Psi^{n,j}_{a_1\dotsc a_s}) \to f_0^*(\Psi^{n,j}_{a_1\dotsc a_s})$ near $p$ as before.
	
	\quad Now consider the case $j=n+1$, and suppose
	\[
		f_0(p) = (\vec p_1,\dotsc, \vec p_n, \vec p_n ;q)\in \tilde\Delta^{(n+2)}_{n+1, n+2} + \tilde\Delta^{(n+2)}_{n+1, n} + \tilde\Delta^{(n+2)}_{n, n+2}.
	\]
	We can reject the cases $f_0(p) \in \tilde\Delta^{(n+2)}_{n, n+2}$ and $f_0(p) \in \tilde\Delta^{(n+2)}_{n+1, n+2}$ as above, so we assume $f_0(p) \in \tilde\Delta^{(n+2)}_{n+1, n}$, which amounts to say that $(p_n, q, p_n)\in \tilde\Delta^{(3)}_{1, 2}$ is contained in the singular locus of $\Psi_{a_1\dotsc a_s}$ by the definition of the projection $\mathbf{p}_{n,n+1}$.
	If $s\geq 1$, the residue condition \eqref{eq:residue} together with the formula $\psi_{a_1\dotsc a_s} = - d^{(2)}(\omega_{a_1\dotsc a_s b b})$ says that $\Psi_{a_1\dotsc a_s}$ is holomorphic near
	\[
		\{(q_0,q_1,q_2) \in \tilde C^3: q_0 = q_2, \varpi(q_1)\neq \varpi(q_2) \},
	\]
	which contains $(p_n, q, p_n)$. This is a contradiction, so we conlude that $f_0(p)\notin \mathrm{sing}(\Psi_{a_1\dotsc a_s})$. We have $f_\varepsilon^*(\Psi^{n,n+1}_{a_1\dotsc a_s}) \to f_0^*(\Psi^{n,n+1}_{a_1\dotsc a_s})$ near $p$.
	
	\quad The remaining case of $(j, s) = (n+1, 0)$ with $(p_n, q, p_n)\in \tilde\Delta^{(3)}_{1, 2}$ is separately dealt with later.
	\item Similarly, as for the fourth term, we have $f_\varepsilon^*(\Psi^{n+1,j}_{a_1\dotsc a_s}) \to f_0^*(\Psi^{n+1,j}_{a_1\dotsc a_s})$ near $p$, except for $(j,s) = (n,0)$ with $(p_n, q, p_n)\in \tilde\Delta^{(3)}_{1, 2}$.
\end{itemize}

The remaining two terms are precisely the contribution of 
\begin{align*}
	f_\varepsilon^*(\Psi^{n,n+1}) t_{n, n+1} +  f_\varepsilon^*(\Psi^{{n+1},n}) t_{{n+1},n}.
\end{align*}
We check the convergence on a local coordinate. Let $z_i$ be a local coordinate of $\tilde C$ near $p_i$, $w$ near $q$, and $(z_i,\lambda_i)$ the associated local coordinate of $T\tilde C$ with $z_i$ as before. Then,
\[	
	(z_1,\lambda_1;\dotsc;z_{n},\lambda_{n}; w)
\]
is a local coordinate of $T\tilde C^{n}\times \tilde C$ near $p = (\vec p_1,\dotsc, \vec p_n, \vec p_n ;q)$, and
\[
	(z_1,\lambda_1;\dotsc;z_{n},\lambda_{n};z_{n},\lambda_{n}; w)
\]
is a local coordinate of $T\tilde C^{n+1}\times \tilde C$ near $f_0(p) = (\vec p_1,\dotsc, \vec p_n, \vec p_n ;q)$. We put $(z_{n+1}, \lambda_{n+1}) = (z_n, \lambda_n)$.

We further assume $q$ is close to $p_n$ so that we can take $w = z_n$. Then, by the expansion \eqref{eq:bidiff} of the basic bi-differential $\psi$, there exists some holomorphic $1$-form $g = g(z_1,\dotsc, z_{n+1}, w)$ so that we have
\begin{align}
	&2\pi\sqrt{-1} \cdot f_\varepsilon^*(\Psi^{n,n+1} + \Psi^{n+1,n}) \notag\\
	&= f_\varepsilon^*\Big( \frac{dz_n}{z_n - z_{n+1}} - \frac{dz_n}{z_n -w} + \frac{dz_{n+1}}{z_{n+1} - z_n} - \frac{dz_{n+1}}{z_{n+1} -w} + g\Big)\notag\\
	&= f_\varepsilon^*\Big( \frac{dz_n - dz_{n+1}}{z_n - z_{n+1}} - \frac{dz_n}{z_n -w} - \frac{dz_{n+1}}{z_{n+1} -w} + g\Big) \notag\\
	&= \frac{dz_n - d(z_n + \varepsilon\lambda_n + O(\varepsilon^2))}{z_n - (z_n + \varepsilon\lambda_n + O(\varepsilon^2))} - \frac{dz_n}{z_n -w} - \frac{d(z_n + \varepsilon\lambda_n + O(\varepsilon^2))}{(z_n + \varepsilon\lambda_n + O(\varepsilon^2)) - w} + f^*_\varepsilon(g) \notag\\
	&= \frac{- \varepsilon d\lambda_n + O(\varepsilon^2)}{- \varepsilon\lambda_n + O(\varepsilon^2)} - \frac{2dz_n}{z_n -w} + f^*_\varepsilon(g) + O(\varepsilon) \notag\\
	&= \frac{d\lambda_n}{\lambda_n} - \frac{2dz_n}{z_n -w} + f^*_\varepsilon(g) + O(\varepsilon).\label{eq:dlambda}
\end{align}
Since $p\notin D$, the functions $\lambda_n$ and $z_n - w$ never vanish, so the right-hand side uniformly converges on any chosen compact set containing $p$ as $\varepsilon \to 0$ by Lemma \ref{lem:holconv}. If $q$ is not close to $p_n$, the terms where $w$ appears in the above computation are absorbed into $g$, so we have the same conclusion. This shows (1).

\noindent (2) Apart from the exceptional two terms, we know that the limit is exactly the pull-back by $f_0$. Since $f_0$ is a holomorphic map, and the singular loci of $\omega_{a_1\dotsc a_s b}^n$, $\omega_{a_1\dotsc a_s b}^{n+1}$, $\Psi^{n,j}_{a_1\dotsc a_s}$ and $\Psi^{n+1,j}_{a_1\dotsc a_s}$ are transverse to the image of $f_0$, the pull-backs are also meromorphic. The poles are inherited from them, so they lie along $\sum_{1\leq i< n}\tilde \Delta^{(n+1)}_{i,n} + \sum_{1\leq i\leq n}\tilde \Delta^{(n+1)}_{i,n+1}$.

On the other hand, the contribution of
\begin{align*}
	f_\varepsilon^*(\Psi^{n,n+1}) t_{n, n+1} +  f_\varepsilon^*(\Psi^{{n+1},n}) t_{{n+1},n}
\end{align*}
also converges to a meromorphic $1$-form by the above computation on local coordinates. We have the pole $\frac{d\lambda_n}{\lambda_n}$, which is a simple pole along $0_{T\tilde C}^{(n)}$. This exhausts all possible singularities.

\noindent (3) For the residue at $\{p\in T\tilde C^n \times \tilde C: p_i = p_n\}$, we have
\begin{align*}
	\mathrm{res}_{p_i=p_n} \eta^{(n)} &= \mathrm{res}_{p_i=p_n} \beta^{(n+1)}_n + \mathrm{res}_{p_i=p_{n+1}} \beta^{(n+1)}_{n+1}\\
	&= \frac{t_{in} + t_{i,n+1}}{2\pi\sqrt{-1}}
\end{align*}
by observing the corresponding poles of $\beta_i^{(n+1)}$. On the other hand, the residue at $0_{T\tilde C}^{(n)}$ is already computed above, which is $\frac{t_{n,n+1}}{2\pi\sqrt{-1}}$.

\noindent (4) By the $(F_g)^n$-equivariance of $f_\varepsilon$, we have
\begin{align*}
	\gamma_a^{(i)}(\eta^{(n)}) &= \lim_{\varepsilon \to 0} f_\varepsilon^*\big(\gamma_a^{(i)}(\beta_n^{(n+1)} + \beta_{n+1}^{(n+1)})\big)\\
	&= \lim_{\varepsilon \to 0} f_\varepsilon^*(e^{\ad x_i^a}(\beta_n^{(n+1)} + \beta_{n+1}^{(n+1)}))\\
	&= e^{\ad x_i^a}\eta^{(n)}
\end{align*}
for $1\leq i<n$, and
\begin{align*}
	\gamma_a^{(n)}(\eta^{(n)}) &= \lim_{\varepsilon \to 0} f_\varepsilon^*\big(\gamma_a^{(n)}\gamma_a^{(n+1)}(\beta_n^{(n+1)} + \beta_{n+1}^{(n+1)})\big)\\
	&= \lim_{\varepsilon \to 0} f_\varepsilon^*(e^{\ad (x_n^a + x_{n+1}^a)}(\beta_n^{(n+1)} + \beta_{n+1}^{(n+1)}))\\
	&= e^{\ad (x_n^a + x_{n+1}^a)}\eta^{(n)}.
\end{align*}

\noindent (5) By Lemma \ref{lem:beta}, we have
\begin{align*}
	d^{(n+1)} \eta^{(n)} &= \lim_{\varepsilon \to 0} f_\varepsilon^*\big( d^{(n+2)} (\beta_n^{(n+1)} + \beta_{n+1}^{(n+1)}) \big)\\
	&=\psi^n\cdot (1-g)(t_{nn} + t_{n+1, n+1}).
\end{align*}
This completes the proof.\qed\\

From now on, we compute an explicit form of $\eta^{(n)}$.

\begin{lemma}\label{lem:int}
Let $\vec{\mathcal{A}}_a$ be the loop in $T\tilde C$ defined in \eqref{eq:vecA}. Then, we have
\begin{align*}
	\int_{\vec p \in \vec{\mathcal{A}}_a} \eta^{(n)}(\vec p_1, \dotsc, \vec p_{n-1}, \vec p; q) &= \frac{\ad x_n^a}{e^{\ad x_n^a} - 1}(y_n^a) + \frac{\ad x_{n+1}^a}{e^{\ad x_{n+1}^a} - 1}(y_{n+1}^a) - t_{n, n+1},
\end{align*}
where $\vec p_i = (p_i, v_i) \in T\tilde C$, and all $p_i$ are in a neighbourhood of $q \in \mathrm{Int}(\mathcal{D})$.
\end{lemma}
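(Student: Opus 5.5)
Split $\eta^{(n)}$ into the part that is an honest pull-back by $f_0$ and the exceptional part, and integrate each separately. By the proof of Lemma \ref{lem:degen} (1)--(2), near the relevant locus we can write
\[
	\eta^{(n)} = f_0^*\big(\beta_n^{(n+1)} + \beta_{n+1}^{(n+1)} - \Psi^{n,n+1}t_{n,n+1} - \Psi^{n+1,n}t_{n+1,n}\big) + \lim_{\varepsilon\to 0} f_\varepsilon^*\big(\Psi^{n,n+1}+\Psi^{n+1,n}\big)\,t_{n,n+1}.
\]
In the first bracket every term has singular locus transverse to the image of $f_0$. Its pull-back is therefore obtained by setting $p_{n+1}=p_n$, i.e.\ by putting both the $n$-th and $(n+1)$-st points on the loop. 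The integral along $\vec{\mathcal{A}}_a$ only sees the base loop $\mathcal{A}_a$, because these terms do not involve $\lambda$. By Lemma \ref{lem:integral}, the $\omega$-parts give $\frac{\ad x_n^a}{e^{\ad x_n^a}-1}(y_n^a)$ and the analogous term for $n+1$, exactly as in Lemma \ref{lem:beta} (3). The $\Psi^{n,j}$ and $\Psi^{n+1,j}$ terms with $j<n$ integrate to zero by the second half of Lemma \ref{lem:integral}.

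Subtlety: $\Psi^{n,j}_{a_1\dotsc a_s}(p,q,p_j)$ with $p_j$ fixed near $q$ while $p$ runs over $\mathcal{A}_a$ is fine, since $q,p_j\in\mathrm{Int}(\mathcal{D})$ lie off $\mathcal{A}_a$. For $s\ge1$, the terms $\Psi^{n,n+1}_{a_1\dotsc a_s}$ become $\Psi_{a_1\dotsc a_s}(p,q,p)$ after $f_0$. We must check these integrate to zero. I would argue that $\int_{p\in\mathcal{A}_a}\Psi_{a_1\dotsc a_s}(p,q,p')$ vanishes for $p'$ off $\mathcal{A}_a$, and then approach the diagonal $p'=p$ by continuity. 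This is legitimate since $\Psi_{a_1\dotsc a_s}$ for $s\ge1$ is holomorphic near $\{q_0=q_2\}$, as shown in Lemma \ref{lem:degen}. A cleaner route: $\Psi_{a_1\dotsc a_s}(p,q,p)=\int_q^p\psi_{a_1\dotsc a_s}(p,\cdot)$, which we differentiate in $q$ and compare to the normalisation. I expect this to need some care but to give $0$.

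Main obstacle: the exceptional $s=0$ term. From \eqref{eq:dlambda},
\[
	2\pi\sqrt{-1}\lim_{\varepsilon\to0}f_\varepsilon^*(\Psi^{n,n+1}+\Psi^{n+1,n}) = \frac{d\lambda_n}{\lambda_n} - \frac{2\,dz_n}{z_n-w} + \text{hol}.
\]
Globally, I would identify this limit as $2\pi\sqrt{-1}\big(\varphi(\vec p_n,q)\big)$ minus $2\Psi$-type pieces, after adjusting by $(2g-2)$-normalisations. The cleanest plan is to compare it with $\varphi$ of Lemma \ref{lem:phi}. Both have the $\frac{d\log\lambda}{2\pi\sqrt{-1}}$ behaviour and poles only at $p_n=q$. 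Their difference descends to $C$ with computable residue and $d^{(2)}$, so it is a combination $c\,\varphi + c'\Psi(p,\cdot,q)$-type term plus a holomorphic form determined by $A$-periods. Concretely, I expect
\[
	\lim f_\varepsilon^*(\Psi^{n,n+1}+\Psi^{n+1,n}) = \tfrac{1}{2g-2}\cdot(\ldots)
\]
to be awkward when $g=1$. So instead I would compute the $\vec{\mathcal{A}}_a$-period directly. Take $\lim_{p'\to p}$ of $\Psi(p,q,p')+\Psi(p',q,p)$, with $p'=\mathrm{Exp}_p(\varepsilon v)$ moving along the loop tangent direction, and integrate over $\vec{\mathcal{A}}_a$.

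The normalised parts integrate to zero by Lemma \ref{lem:integral}; note that $\int_{p\in A_a}\Psi(p,q,p')=0$ also holds in the second slot, since $\psi$ is symmetric. What remains is the local singular contribution $\frac{d\lambda}{\lambda}$ together with a regularisation. Along $\vec{\mathcal{A}}_a$ with $\lambda=d\ell/dt$, the winding of the tangent vector in the coordinate, paired with the $\log(z-z')$ branch change, should produce $-2\pi\sqrt{-1}$ in total. This gives the $-t_{n,n+1}$. I would verify the sign by doing the computation in a flat annular chart around $\mathcal{A}_a$, where $\lambda$ is constant along the loop. There the $\frac{d\lambda}{\lambda}$ term contributes $0$, and the $-1$ comes from the discontinuity of $\int_q^{p}\psi(p',\cdot)$ as $p'$ crosses the path from $q$ to $p$, i.e.\ from a residue of $\psi$ at $p'=p$ picked up in the limit.
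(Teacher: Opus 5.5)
There is a genuine gap, and it sits exactly where the content of the lemma lies: the cross terms $\Psi^{n,n+1}_{a_1\dotsc a_s}$ and $\Psi^{n+1,n}_{a_1\dotsc a_s}$, in which both moving points are on the loop.

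\emph{The case $s\geq 1$.} Your continuity argument does not apply. Continuity in $p'$ shows that $\int_{p\in\mathcal{A}_a}\Psi_{a_1\dotsc a_s}(p,q,p')=0$ for every \emph{fixed} $p'$, including $p'\in\mathcal{A}_a$. But $\int_{p\in\mathcal{A}_a}\Psi_{a_1\dotsc a_s}(p,q,p)$ is a different integral, because the third argument moves with the integration variable. The form $\Psi_{a_1\dotsc a_s}(p,q,p')$ is not closed on $\tilde C^2$: its differential in $p'$ is $\psi_{a_1\dotsc a_s}(p,p')\neq 0$. So the two integrals differ by the integral of $\psi_{a_1\dotsc a_s}$ over a $2$-chain (roughly half of $\mathcal{A}_a\times\mathcal{A}_a$), and nothing in your argument controls that. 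What actually vanishes is the combination of the $\Psi^{n,n+1}$- and $\Psi^{n+1,n}$-contributions with their Lie coefficients, and proving this needs the idea below. Differentiating in $q$ only gives independence of $q$, not vanishing.

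\emph{The case $s=0$.} Here the problem is sharper. $\Psi(p,q,p_\varepsilon)$ and $\Psi(p_\varepsilon,q,p)$ each blow up like $1/\varepsilon$, so Lemma \ref{lem:integral} cannot be applied to separate ``normalised parts''. The mechanism you propose is also not the right one: $\psi$ has zero residue, so $\int_q^p\psi(p',\cdot)$ does not jump when $p'$ crosses a path. The $-1$ actually comes from the simple pole of $\Psi(\cdot,q,p_0)$ at $p_0$, with residue $\frac{1}{2\pi\sqrt{-1}}$, which is picked up when $p_0$ crosses $\mathcal{A}_a$.

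\emph{The missing idea.} On the slice $\{(p_1,\dotsc,p_{n-1})\}\times\tilde C^2\times\{q\}$ one has $d^{(n+1)}\beta_n^{(n+1)}=d^{(n)}\beta_{n+1}^{(n+1)}$. This closedness lets you replace the loop $p\mapsto(p,\mathrm{Exp}_p(\varepsilon v))$ by $\ell_1+\ell_3$, on each of which only one point moves. The paper does this for the whole of $\beta_n^{(n+1)}+\beta_{n+1}^{(n+1)}$, with $\arg\varepsilon$ close to $\pi/4$:
\begin{itemize}
\item on $\ell_1$ the fixed $(n+1)$-st point is in $\mathrm{Int}(\mathcal{D})$, and Lemma \ref{lem:beta} (3) applies directly;
\item on $\ell_3$ the fixed $n$-th point lies just across $\mathcal{A}_a$, in $\gamma_a(\mathcal{D})$, and the automorphy in that variable gives $e^{-\ad x_n^a}\frac{\ad x_{n+1}^a}{e^{\ad x_{n+1}^a}-1}(y_{n+1}^a)=\frac{\ad x_{n+1}^a}{e^{\ad x_{n+1}^a}-1}(y_{n+1}^a)-t_{n,n+1}$.
\end{itemize}

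\emph{How to repair your decomposition.} The same device works if you apply it to the cross terms only.
\begin{itemize}
\item The cross terms alone form a closed form on the slice, since every other summand depends on only one of the two moving points.
\item After the deformation, $\ell_1$ contributes $0$ by the $A$-normalisation (Lemma \ref{lem:integral}).
\item On $\ell_3$ the $s\geq1$ terms contribute $0$: they have no pole at $p'=p_0$, so the vanishing continues analytically across $\mathcal{A}_a$.
\item On $\ell_3$ the $s=0$ term contributes the residue jump $-1$, i.e.\ $-t_{n,n+1}$.
\end{itemize}
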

\noindent Proof. By definition, we have
\begin{align*}
	\int_{\vec p \in \vec{\mathcal{A}}_a} \eta^{(n)}(\vec p_1, \dotsc, \vec p_{n-1}, \vec p; q) &= \lim_{\varepsilon \to 0} \int_{\vec p \in \vec{\mathcal{A}}_a} f_\varepsilon^*(\beta_n^{(n+1)} + \beta_{n+1}^{(n+1)})(\vec p_1, \dotsc, \vec p_{n-1}, \vec p; q)\\
	&= \lim_{\varepsilon \to 0} \int_{(p,v) \in \vec{\mathcal{A}}_a} (\beta_n^{(n+1)} + \beta_{n+1}^{(n+1)})(p_1, \dotsc, p_{n-1}, p, \mathrm{Exp}_p(\varepsilon v); q)\\
	&=: I.
\end{align*}
Since we have $d^{(n+1)}\beta_n^{(n+1)} = d^{(n)}\beta_{n+1}^{(n+1)}$ (see the proof of Theorem \ref{thm:kzb}), the integral of $\beta_n^{(n+1)} + \beta_{n+1}^{(n+1)}$ only depends on the homotopy class of the path within $\{(p_1,\dotsc, p_{n-1})\}\times \tilde C^2\times \{q\} \subset \tilde C^{n+2}$.

Now take a point $\vec{\mathbf{p}} = (\mathbf{p}, \mathbf{v})$ on $\vec{\mathcal{A}}_a$. Since we have already established the convergence, we only consider $\varepsilon$ with $\arg(\varepsilon)$ close to $\pi/4$. Since $\mathcal{A}_a$ is, by definition, a smooth boundary of $\mathcal{D}$ with the induced orientation, so we have $\mathrm{Exp}_\mathbf{p}(\varepsilon \mathbf{v}) \in \mathrm{Int}(\mathcal{D})$. Then, the homology class of the loop
\[
	\vec{\mathcal{A}}_a \to \tilde C^{n+1}: (p,v)\mapsto (p_1, \dotsc, p_{n-1}, p, \mathrm{Exp}_p(\varepsilon v); q)
\]
is represented by the sum of the homology classes of the two loops
\begin{align*}
	\ell_1&\colon \vec{\mathcal{A}}_a \to \tilde C^{n+1}: (p,v)\mapsto (p_1, \dotsc, p_{n-1}, p, \mathrm{Exp}_\mathbf{p}(\varepsilon \mathbf{v}); q) \mbox{ and}\\
	\ell_2 &\colon \vec{\mathcal{A}}_a \to \tilde C^{n+1}: (p,v) \mapsto (p_1, \dotsc, p_{n-1}, \mathbf{p}, \mathrm{Exp}_p(\varepsilon v); q).
\end{align*}
Shifting $\ell_2$ by an isotopy in the direction of $-\varepsilon v$ at each $p\in \mathcal{A}_a$, where $v$ is the velocity vector of $\mathcal{A}_a$ at $p$, it can be replaced with
\begin{align*}
	\ell_3 &\colon \vec{\mathcal{A}}_a \to \tilde C^{n+1}: (p,v) \mapsto (p_1, \dotsc, p_{n-1}, \mathrm{Exp}_\mathbf{p}(-\varepsilon \mathbf{v}), p; q).
\end{align*}
Since $\beta_i^{(n+1)}$ is a meromorphic section of $K^{(i)}_{\tilde C}$, we have
\begin{align*}
	I &= \lim_{\varepsilon \to 0} \int_{\ell_1 + \ell_3} (\beta_n^{(n+1)} + \beta_{n+1}^{(n+1)})\\
	&= \lim_{\varepsilon \to 0} \Big(\int_{\ell_1} \beta_n^{(n+1)} + \int_{\ell_3} \beta_{n+1}^{(n+1)}\Big)
\end{align*}
as the only moving point of $\ell_1$ (resp.\ $\ell_3$) is in the $n$-th (resp.\ $(n+1)$-st) component.

The first integral is equal to $\frac{\ad x_n^a}{e^{\ad x_n^a} - 1}(y_n^a)$ by Lemma \ref{lem:beta} (3) since all the non-moving points
\[
	p_1,\dotsc, p_{n-1}, \mathrm{Exp}_\mathbf{p}(\varepsilon \mathbf{v})
\]
are in $\mathrm{Int}(\mathcal{D})$. On the other hand, in the second integral, we have $\mathrm{Exp}_p(-\varepsilon \mathbf{v})\notin\mathrm{Int}(\mathcal{D})$ but $\gamma_a^{-1}(\mathrm{Exp}_p(-\varepsilon \mathbf{v})) \in\mathrm{Int}(\mathcal{D})$. Therefore,
\begin{align*}
	\int_{\ell_3} \beta_{n+1}^{(n+1)} &= \int_{(\gamma_a^{(n)})^{-1}(\ell_3)} (\gamma_a^{(n)})^{-1}(\beta_{n+1}^{(n+1)})\\
	&= \int_{(\gamma_a^{(n)})^{-1}(\ell_3)} e^{-\ad x_n^a} (\beta_{n+1}^{(n+1)})\\
	&= e^{-\ad x_n^a} \frac{\ad x_{n+1}^a}{e^{\ad x_{n+1}^a} - 1}(y_{n+1}^a)\\
	&= \frac{\ad x_{n+1}^a}{e^{\ad x_{n+1}^a} - 1}(y_{n+1}^a) + \frac{e^{-\ad x_n^a} - 1}{\ad x_n^a} \cdot \ad x_n^a \cdot \frac{\ad x_{n+1}^a}{e^{\ad x_{n+1}^a} - 1}(y_{n+1}^a)
\end{align*}
Using $[x_n^a, y_{n+1}^a] = t_{n,n+1}$, $[x_n^a, x_{n+1}^a] = 0$ and $[x_n^a + x_{n+1}^a, t_{n,n+1}] = 0$, we have
\begin{align*}
	\textrm{(RHS)} &= \frac{\ad x_{n+1}^a}{e^{\ad x_{n+1}^a} - 1}(y_{n+1}^a) + \frac{e^{-\ad x_n^a} - 1}{\ad x_n^a} \frac{\ad x_{n+1}^a}{e^{\ad x_{n+1}^a} - 1}(t_{n,n+1})\\
	&= \frac{\ad x_{n+1}^a}{e^{\ad x_{n+1}^a} - 1}(y_{n+1}^a) + \frac{e^{-\ad x_n^a} - 1}{\ad x_n^a} \frac{-\ad x_n^a}{e^{-\ad x_n^a} - 1}(t_{n,n+1})\\
	&= \frac{\ad x_{n+1}^a}{e^{\ad x_{n+1}^a} - 1}(y_{n+1}^a) - t_{n,n+1}.
\end{align*}
Combining these two yields the result.\qed\\[-7pt]

\begin{lemma}\label{lem:eta}
We have $\eta^{(n)} = (\beta_n^{(n)})^{1,\dotsc, n-1, n(n+1)} + \varphi^n t_{n,n+1}$.
\end{lemma}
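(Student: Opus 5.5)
We will prove the identity by a uniqueness argument. Put $\Theta = \eta^{(n)} - (\beta_n^{(n)})^{1,\dotsc, n-1, n(n+1)} - \varphi^n t_{n,n+1}$, regarded as a meromorphic $1$-form on $T\tilde C^n\times\tilde C$ in the $n$-th variable $\vec p_n$, with all other variables fixed. We will show that $\Theta$ satisfies the hypotheses that forced $\Phi=0$ in the uniqueness part of Lemma \ref{lem:phi}, applied coefficientwise in $\hat{\mathfrak{t}}^f_{g,n+1}$ (degree by degree, so each coefficient is a scalar form).

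The first task is to match the singularities. By Lemma \ref{lem:degen} (2),(3), $\eta^{(n)}$ has simple poles only along $\tilde\Delta_{i,n}$ ($i<n$), $\tilde\Delta_{n,n+1}$ and $0^{(n)}_{T\tilde C}$. The residues along $\tilde\Delta_{i,n}$ are $\frac{t_{in}+t_{i,n+1}}{2\pi\sqrt{-1}}$, which is the image under $(-)^{1,\dotsc,n-1,n(n+1)}$ of the residue $\frac{t_{in}}{2\pi\sqrt{-1}}$ of $\beta_n^{(n)}$. The residue along $0^{(n)}$ is $\frac{t_{n,n+1}}{2\pi\sqrt{-1}}$, which matches $\varphi^n t_{n,n+1}$ by Lemma \ref{lem:phi} (1). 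Along $p_n=q$ (the divisor $\tilde\Delta^{(n+1)}_{n,n+1}$), the pole of $\eta^{(n)}$ comes from $f_0^*$ of the $q$-poles of $\beta_n^{(n+1)}+\beta_{n+1}^{(n+1)}$ together with the term $-\frac{2dz_n}{z_n-w}t_{n,n+1}$ in \eqref{eq:dlambda}. We will compare this, term by term, with the images of the $q$-poles of $\beta^{(n)}_n$ and with the residue $\frac{2g-2}{2\pi\sqrt{-1}}t_{n,n+1}$ of $\varphi^n$ from Lemma \ref{lem:phi} (2). The expected identity uses the framed relation, i.e.\ the image of $(g-1)t_{nn}$ under the doubling map equals $(g-1)(t_{nn}+2t_{n,n+1}+t_{n+1,n+1})$. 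A cleaner route is to avoid residues at $p_n=q$ altogether. For this we check $d^{(n+1)}\Theta=0$ using Lemma \ref{lem:degen} (5), Lemma \ref{lem:beta} (2) and Lemma \ref{lem:phi} (3):
\[
(1-g)\psi^n(t_{nn}+t_{n+1,n+1}) - (1-g)\psi^n(t_{nn}+2t_{n,n+1}+t_{n+1,n+1}) - (2-2g)\psi^n t_{n,n+1} = 0 .
\]
So $\Theta$ is independent of $q$. We can therefore move $q$ away from $p_n$, and the residue along $p_n=q$ must vanish, since the residue of a $q$-independent form along the divisor $p_n=q$ is zero. Hence $\Theta$ is holomorphic in $\vec p_n$, and it has no $d\log\lambda_n$ part.

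Next we check equivariance. By Lemma \ref{lem:degen} (4) and Lemma \ref{lem:beta} (1), applied via the Lie morphism, together with the $F_g$-invariance of $\varphi$ and the centrality of $t_{n,n+1}$ under $x_n^a+x_{n+1}^a$, the form $\Theta$ transforms under $\gamma^{(n)}_a$ by $e^{\ad(x_n^a+x_{n+1}^a)}$. The vanishing of the $A$-periods should come from Lemma \ref{lem:int}. That lemma gives $\frac{\ad x_n^a}{e^{\ad x_n^a}-1}(y_n^a)+\frac{\ad x_{n+1}^a}{e^{\ad x_{n+1}^a}-1}(y_{n+1}^a)-t_{n,n+1}$.

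The main obstacle is the comparison of these periods and the final vanishing. The image of Lemma \ref{lem:beta} (3) is $\frac{\ad(x_n^a+x_{n+1}^a)}{e^{\ad(x_n^a+x_{n+1}^a)}-1}(y_n^a+y_{n+1}^a)$, and Lemma \ref{lem:phi} (4) says $\varphi$ contributes zero. We must therefore verify the Lie identity
\[
\tfrac{\ad(X+X')}{e^{\ad(X+X')}-1}(Y+Y') = \tfrac{\ad X}{e^{\ad X}-1}(Y) + \tfrac{\ad X'}{e^{\ad X'}-1}(Y') - t_{n,n+1},
\]
where $X=x_n^a$, $X'=x_{n+1}^a$, $Y=y_n^a$, $Y'=y_{n+1}^a$. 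I would expand in $\ad X'$ acting on $Y$ and in $\ad X$ acting on $Y'$, using $[X,X']=0$, $[X,Y']=[X',Y]=t_{n,n+1}$ and the killing of $t_{n,n+1}$ by $X+X'$. Only first-order cross terms survive, and they should collapse to $-t_{n,n+1}$, exactly as in the last computation of Lemma \ref{lem:int}.

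The periods of $\Theta$ along $\vec{\mathcal A}_a$ then vanish. Because of the twisted equivariance, $\Theta$ is not literally a holomorphic form on $C$. We will handle this by filtering by degree: the lowest-degree nonzero component of $\Theta$ is $F_g$-invariant modulo higher terms, so it is a holomorphic $1$-form on $C$ with vanishing $A$-periods, hence zero. Inducting on degree gives $\Theta=0$.
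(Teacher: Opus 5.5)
Your proposal is correct and is essentially the paper's own argument. The paper works with $P=-\Theta$ and proves the same things in the same order: $d^{(n+1)}P=0$, the twisted $F_g$-equivariance, cancellation of the residues along $p_i=p_n$ (with the poles along $p_i=q$ removed by $q$-independence), and vanishing of the $\vec{\mathcal A}_a$-periods via the same Lie identity you state. It then concludes with the lowest-degree-component argument, as you do.

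One slip to fix: by Lemma~\ref{lem:phi}~(3), the last term in your displayed computation of $d^{(n+1)}\Theta$ should be $-(2g-2)\psi^n t_{n,n+1}$, not $-(2-2g)\psi^n t_{n,n+1}$. As written, the left-hand side equals $(4g-4)\psi^n t_{n,n+1}$ rather than $0$.
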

\noindent Proof. Let $P = (\beta_n^{(n)})^{1,\dotsc, n-1, n(n+1)} + \varphi^n t_{n,n+1} - \eta^{(n)} $.
\begin{itemize}
	\item First of all, we show $d^{(n+1)}(P) = 0$: by Lemmata \ref{lem:beta}, \ref{lem:phi} and \ref{lem:degen}, we have
\begin{align*}
	d^{(n+1)}(P) &= (\psi^n (1-g)t_{nn})^{1,\dotsc, n-1, n(n+1)} + \psi^n (2g-2)t_{n,n+1} - \psi^n (1-g)(t_{nn} + t_{n+1, n+1})\\
	&= 0.
\end{align*}
	\item Next, we show $\gamma_a^{(i)}(P) = (e^{\ad x_i^a})^{1,\dotsc, n-1, n(n+1)} P$. By Lemmata \ref{lem:beta} (1) and \ref{lem:degen} (4), $(\beta_n^{(n)})^{1,\dotsc, n-1, n(n+1)}$ and $\eta^{(n)}$ satisfies this transformation rule. As for $\varphi^n t_{n,n+1}$, we use the fact that $t_{n,n+1}$ commutes with $(x_i^a)^{1,\dotsc, n-1, n(n+1)}$ for any $1\leq i\leq n$ to conclude that it also satisfies the same rule.
	\item Now we show that $P(\vec p_1, \dotsc, \vec p_n; q)$ is holomorphic near the diagonals $\{p_i = p_n\}$. Firstly, any pole of $(\beta_n^{(n)})^{1,\dotsc, n-1, n(n+1)}$ along $\{p_i = p_n\}$ for some $1\leq i<n$ has the residue $\Big(\frac{t_{in}}{2\pi\sqrt{-1}}\Big)^{1,\dotsc, n-1, n(n+1)}$. All of these are precisely cancelled with the poles of $\eta^{(n)}$ by Lemma \ref{lem:degen} (3). We also know that the residue of $P$ along $\{p_i = q\}$ is zero since we showed that $d^{(n+1)}(P) = 0$ and $q$ is arbitrary.
	\item Finally, we show $\int_{\vec p\in\vec{\mathcal{A}_a}} P(\vec p_1, \dotsc, \vec p_{n-1}, \vec p; q) = 0$ for $p_i, q\in\mathrm{Int}(\mathcal{D})$. This follows from
\[
	\Big(\frac{\ad x_n^a}{e^{\ad x_n^a} - 1}(y_n^a)\Big)^{1,\dotsc, n-1, n(n+1)} = \frac{\ad x_n^a}{e^{\ad x_n^a} - 1}(y_n^a) + \frac{\ad x_{n+1}^a}{e^{\ad x_{n+1}^a} - 1}(y_{n+1}^a) - t_{n, n+1},
\]
which, in turn, follows from the relations $[x_n^a, y_{n+1}^a] = t_{n,n+1}$, $[x_n^a, x_{n+1}^a] = 0$ and $[x_n^a + x_{n+1}^a, t_{n,n+1}] = 0$.
\end{itemize}

In summary, we have
\begin{gather*}
	d^{(n+1)}(P) = 0, \quad \gamma_a^{(i)}(P) = (e^{\ad x_i^a})^{1,\dotsc, n-1, n(n+1)} P \quad\mbox{and}\quad \int_{\vec p\in\vec{\mathcal{A}_a}} P(\vec p_1, \dotsc, \vec p_{n-1}, \vec p; q)  = 0.
\end{gather*}
By Lemma \ref{lem:phi} (1) and the computation in \eqref{eq:dlambda}, we know that the $d\lambda$-component in $\varphi$ and $\eta^{(n)}$ cancel each other out, so $P$ desecnds to $\tilde C^n$. Now we set up the non-negative grading on $\mathfrak{t}_{g,n}^f$ by declaring $\deg(x_i^a) = \deg(t_{ij}) = 1$ and $\deg(y_i^a) = 0$. Suppose $P \neq 0$ and take the smallest $d\geq 0$ such that the degree-$d$ part $P_d$ is non-zero. By extracting the degree-$d$ part of $\gamma_a^{(i)}(P) = (e^{\ad x_i^a})^{1,\dotsc, n-1, n(n+1)} P$, we have
\[
	\gamma_a^{(i)}(P_d) = P_d
\]
for all $1\leq a\leq g$ and $1\leq i\leq n$. This implies that $P_d \in H^0(C^n, K_C^{(n)}\otimes \hat{\mathfrak{t}}_{g,n}^f)$, which leads to $P_d = 0$ as all integrals along $A$-loops vanish, yielding the contradiction. This shows $P = 0$ and completes the proof.\qed\\

\noindent \textbf{Proof of Theorem \ref{thm:conv}.} (1) The invariance under $\mathrm{S}_n$-action is clear from the definition of $\vec\alpha_\mathrm{KZ}^{(n)}$.

\noindent (2) We have already seen that $f_\varepsilon^* (\beta_n^{(n+1)} + \beta_{n+1}^{(n+1)})$ converges to $\eta^{(n)}$. As for $f_\varepsilon^*(\beta_i^{(n+1)})$ with $1\leq i< n$, Lemma \ref{lem:holconv} is applicable to all summands, and we have
\begin{align*}
	\lim_{\varepsilon \to 0}f_\varepsilon^*(\beta_i^{(n+1)}) &= \sum_{s\geq 0} \sum_{1\leq a_1,\dotsc a_s, b\leq g} \omega_{a_1\dotsc a_s b}^i \ad x_i^{a_1} \cdots \ad x_i^{a_s}(y_i^b)\\
	&\qquad + \sum_{s\geq 0} \sum_{1\leq a_1,\dotsc a_s \leq g} \Big( \sum_{\substack{1\leq j< n\\j\neq i}}  \Psi^{i,j}_{a_1\dotsc a_s} \ad x_i^{a_1} \cdots \ad x_i^{a_s}(t_{ij})\\
	&\hspace{110pt} + \Psi^{i,n}_{a_1\dotsc a_s} \ad x_i^{a_1} \cdots \ad x_i^{a_s}(t_{in}) + \Psi^{i,n}_{a_1\dotsc a_s} \ad x_i^{a_1} \cdots \ad x_i^{a_s}(t_{i,n+1})\Big)\\
	&= (\beta_i^{(n)})^{1,\dotsc, n-1, n(n+1)}.
\end{align*}
Combining with Lemma \ref{lem:eta}, we have
\begin{align*}
	\lim_{\varepsilon \to 0} f_\varepsilon^*(\vec\alpha_\mathrm{KZ}^{(n+1)}) &= \sum_{1\leq i\leq n+1} \lim_{\varepsilon \to 0}f_\varepsilon^*\Big( \beta_i^{(n)} + \varphi^i \frac{t_{ii}}{2} \Big)\\
	&= \sum_{1\leq i < n} \Big( \beta_i^{(n)} +\varphi^i \frac{t_{ii}}{2} \Big)^{1,\dotsc, n-1, n(n+1)}\\
	&\qquad + (\beta_n^{(n)})^{1,\dotsc, n-1, n(n+1)} + \varphi^n t_{n,n+1} +  \varphi^n \Big(\frac{t_{nn}}{2} + \frac{t_{n+1, n+1}}{2}\Big)\\
	&= \sum_{1\leq i \leq n} \Big( \beta_i^{(n)} +\varphi^i \frac{t_{ii}}{2} \Big)^{1,\dotsc, n-1, n(n+1)}\\
	&= (\vec\alpha_\mathrm{KZ}^{(n)})^{1,\dotsc, n-1, n(n+1)}.
\end{align*}
This completes the proof.\qed\\[-7pt]

\begin{theorem}\label{thm:operadic}
$\{\vec{\alpha}_\mathrm{KZ}^{(n)}\}_{n\geq 0}$ is a unique lift of $\{\alpha_\mathrm{KZ}^{(n)}\}_{n\geq 0}$ satisfying (1) and (2) in Theorem \ref{thm:conv}.
\end{theorem}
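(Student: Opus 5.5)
Existence is Theorems \ref{thm:kzb} and \ref{thm:conv}, so only uniqueness needs proof. I would first pin down what a "lift" means, using the Remark after Theorem \ref{thm:kzb}. A competing family $\{\vec\alpha'^{(n)}\}$ consists of flat connection forms on $\mathcal{P}^f_n$ with $q(\vec\alpha'^{(n)}) = \pi^*\alpha^{(n)}_\mathrm{KZ}$. Their differences $\Theta^{(n)} = \vec\alpha'^{(n)} - \vec\alpha^{(n)}_\mathrm{KZ}$ therefore take values in $\Ker q = \bigoplus_i \mathbb{C}t_{ii}$. Since the $t_{ii}$ are central, both transformation rules $\gamma_a^{(j)}(\cdot) = e^{\ad x_j^a}(\cdot)$ hold, so $\Theta^{(n)}$ is $(F_g)^n$-invariant and descends to a scalar holomorphic $1$-form $\Theta^{(n)} = \sum_i \theta^{(n)}_i t_{ii}$ on $\mathrm{Conf}^\mathrm{fr}_n(C)$. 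I would fix the precise class of lifts as in the Remark: the fibre restriction in Theorem \ref{thm:kzb} (5) is prescribed, so $\Theta^{(n)}$ is pulled back from $\mathrm{Conf}_n(C)$, and it is meromorphic on $C^n$ with at most simple poles on diagonals. Flatness and centrality give $d\Theta^{(n)}=0$.

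The core is an induction on $n$ using (2) of Theorem \ref{thm:conv}. Since $f_\varepsilon^*$ is linear, both families satisfy (2), so the limits match:
\[
	\lim_{\varepsilon\to0} f_\varepsilon^*\Theta^{(n+1)} = (\Theta^{(n)})^{1,\dotsc,n-1,n(n+1)} .
\]
On the right, $t_{nn}\mapsto t_{nn}+2t_{n,n+1}+t_{n+1,n+1}$, so the right-hand side has a $t_{n,n+1}$-coefficient $2\theta^{(n)}_n$. The left-hand side has only $t_{ii}$-coefficients. Hence $\theta^{(n)}_n = 0$, and by the $\mathrm{S}_n$-symmetry (1), $\theta^{(n)}_i=0$ for all $i$ and all $n\ge 1$. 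The case $n=0$ is trivial since $\mathfrak{t}^f_{g,0}=0$.

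The main obstacle is making the left-hand side rigorous. I need to know that $\lim f_\varepsilon^*\Theta^{(n+1)}$ exists and stays in $\bigoplus \mathbb{C}t_{ii}$, so that the coefficient comparison is legitimate. This follows because the limit of the difference is the difference of the limits, both of which exist by hypothesis. Taking coefficients in the basis of the degree-one part of $\mathfrak{t}^f_{g,n+1}$, where $t_{n,n+1}$ and the $t_{ii}$ are linearly independent, then gives the conclusion. I would check this linear independence using the grading with $\deg x=0$, $\deg y=\deg t=1$ from the proof of Theorem \ref{thm:kzb}.

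If the intended notion of lift does not prescribe the fibre restriction, then $\Theta^{(n)}$ may contain $d\log\lambda_i$ terms. I would kill these in the same way: the pullback $f_\varepsilon^*(d\log\lambda_{n+1}) \to d\log\lambda_n$ has no $t_{n,n+1}$ component, while the operadic image of $c\,d\log\lambda_n\,t_{nn}$ does. So the $t_{n,n+1}$-comparison again forces $c=0$.
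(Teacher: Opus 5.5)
Your proposal is correct and is the paper's argument: the difference of two lifts is $\bigoplus_i\mathbb{C}t_{ii}$-valued, condition (2) of Theorem~\ref{thm:conv} forces the $t_{n,n+1}$-coefficient of $(\Theta^{(n)})^{1,\dotsc,n-1,n(n+1)}$ to vanish, and the $\mathrm{S}_n$-symmetry then kills every $\theta^{(n)}_i$. That coefficient is indeed $2\theta^{(n)}_n$, as you say. Your discussion of fibre restrictions and $d\log\lambda_i$ terms is unnecessary, since this comparison works for arbitrary $\Ker q$-valued $1$-forms; and for $g\geq 1$ the linear independence of $t_{n,n+1}$ from the $t_{ii}$ is more cleanly obtained by applying $q$ and using $\bar t_{n,n+1}\neq 0$ in $\mathfrak{t}_{g,n+1}$ than from the grading, which by itself does not show it.
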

\noindent Proof. Any lift of $\alpha_\mathrm{KZ}^{(n)}$ is of the form $\vec{\alpha}_\mathrm{KZ}^{(n)} + A^{(n)}$, where $A^{(n)}$ is some $1$-form on $T\tilde C^n$ whose coefficients are in $\Ker q = \bigoplus_{1\leq i\leq n} \mathbb{C}t_{ii}$. Put $A^{(n)} = \sum_{1\leq i\leq n} A^{(n)}_i t_{ii}$. By the condition (2), the limit $\lim_{\varepsilon \to 0} f_\varepsilon^*(A^{(n+1)})$ must exist, and we necessarily have 
\[
	\lim_{\varepsilon \to 0} f_\varepsilon^*(A^{(n+1)}) = (A^{(n)})^{1,\dotsc, n-1, n(n+1)}.
\]
The coefficient of $t_{n,n+1}$ in the right-hand side is $A^{(n)}_n$, while $t_{n, n+1}$ does not appear in the left-hand side.
This shows $A^{(n)}_n = 0$. By the condition (1), we can swap the indices by $\mathrm{S}_n$-action, and we have $A^{(n)}_i = 0$ for any $1\leq i\leq n$.\qed

\begin{remark}
The above proof shows that an operadic lift is always unique, if exists, for any family of connections on $\mathrm{Conf}_n(C)$.
\end{remark}

\begin{remark} 
The notion of operadicity for a family of connections is due to Gonzalez, introduced in the unpublished manuscript \cite{gonzop}, but the notion itself has, of course, essentially appeared in the original paper of Drinfeld \cite{drinfeld}. 

According to Gonzalez, the full definition of the operadicity requires data of: (1) a variety $X_n$ with a well-behaving divisor $\partial X_n$ such that the family $(X_n, \partial X_n)$ constitutes an operad, and (2) a (flat) connection on $X_n$ extended to $\partial X_n$ as a log-connection. Such a family of connections is operadic if connections are compatible with the pull-back by the operadic composition maps. While the detailed algebro-geometric discussion is out of the scope of this paper as it requires a suitable complex compactification of $\mathrm{Conf}_n^\mathrm{fr}(C)$, namely the Fulton--MacPherson compactification \cite{fm}, Theorem \ref{thm:conv} would be the essential step in proving the operadicity in Gonzalez' sense.

If we reduce $\mathrm{Conf}_n^\mathrm{fr}(C)$ to the framed configuration space with \textit{unit} tangent vectors
\[
	\mathrm{Conf}_n^\mathrm{ufr}(C) := \mathrm{Conf}_n^\mathrm{fr}(C)/(\mathbb{R}_{>0})^n,
\]
we have natural maps
\[
	F\colon \mathrm{Conf}_n^\mathrm{fr}(C) \to \mathrm{Conf}_n^\mathrm{ufr}(C) \to \partial\overline{\mathrm{Conf}_{2n}(C)}
\]
where $\overline{\mathrm{Conf}_{2n}(C)}$ is the Axlelrod--Singer compactification \cite{axcomp}, which is a real manifold with corners, and the second arrow is the inclusion to the boundary strata. We also have the map
\[
	F_\varepsilon \colon \mathrm{Conf}_n^\mathrm{fr}(C) \to \mathrm{Conf}_{2n}^\mathrm{fr}(C) \twoheadrightarrow \mathrm{Conf}_{2n}(C) \hookrightarrow \overline{\mathrm{Conf}_{2n}(C)}
\]
where the first arrow is doubling all points with the parameter $\varepsilon$, similarly to $f_\varepsilon$. Then, we have $F = \lim_{\varepsilon \to 0}F_\varepsilon$, and the theorems above may be restated as that the connection $\alpha_\mathrm{KZ}$ extends to the boundary of $\overline{\mathrm{Conf}_{2n}(C)}$. This is how we obtained the formula in Definition \ref{def:conn} in the first place.

Markl showed in \cite{markl} that the framed version of the Axlelrod--Singer compactification of $d$-dimensional real manifold has a natural action of the operad, in the category of topological spaces, of compactified configuration spaces of framed points in $\mathbb{R}^d$. Indeed, the operad $\mathbf{PaB}^f_\Sigma$ in the category of groupoids in the next section is obtained by taking the fundamental groupoid of Markl's one for $d = 2$.\\
\end{remark}

\section{The Framed Genus $g$ KZB Associator}\label{sec:gonzalez}

In this section, we recall the definition of a genus $g$ Gonzalez--Drinfeld (GD) associator as well as canonical solutions for the KZB system, and show that the framed KZB connection $\vec\alpha_\mathrm{KZ}^{(n)}$, in fact, gives an example of a GD associator. For the original definition of genus $g$ GD associators, see \cite{gonzalez}.

\subsection{Parenthesised Permutations}
We recall some definitions to fix the notation.

\begin{definition}
	A \textit{rooted planar binary tree} is a pair $(\Gamma, r)$ of a finite planar uni-trivalent tree $\Gamma$ and its univalent vertex $r$. We set
	\begin{align*}
		\mathrm{Edge}(\Gamma) &= \{\mbox{all edges of }\Gamma\},\\
		\mathrm{Node}(\Gamma) &= \{\mbox{all trivalent vertices of }\Gamma\},\mbox{ and }\\
		\mathrm{Leaf}(\Gamma, r) &= \{\mbox{all univalent vertices of } \Gamma\} \setminus \{r\}.
	\end{align*}
	For each $t \in \mathrm{Node}(\Gamma)$, we refer to edges incident to $t$ as $\mathsf{up}(t), \mathsf{left}(t), \mathsf{right}(t)$ in this cyclic order, where $\mathsf{up}(t)$ is the one closest to $r$. The set of all vertices is equipped with the partial order $\leq$, where $x\leq y$ means that $x$ is in the downstream of $y$; $r$ is hence the largest element. We denote by $x\lor y$ the join (or the supremum) of $x$ and $y$, which always exists. We futher set 
	\begin{align*}
		\mathrm{Leaf}(t) &= \{i\in \mathrm{Leaf}(\Gamma, r): i\leq t\},\\
		\mathrm{Leaf}(e) &= \{i\in \mathrm{Leaf}(\Gamma, r): i\mbox{ is contained in the rooted subtree below } e\},\mbox{ and}\\
		\mathrm{REdge}(\Gamma, r) &= \{\mathsf{right}(t): t \in \mathrm{Node}(\Gamma)\} \cup \{e_r\} \subset \mathrm{Edge}(\Gamma),
	\end{align*}
	where $e\in\mathrm{Edge}(\Gamma)$, $t\in\mathrm{Node}(\Gamma)$, and $e_r$ is the edge incident to $r$. 
	Furthermore, we define the map
	\[
		L\colon \mathrm{Edge}(\Gamma) \to \mathrm{Leaf}(\Gamma, r)
	\]
	by declaring that $L(e)$ is the unique leaf reached via the downward path in $\Gamma$ starting from $e$ by choosing $\mathsf{left}(t)$ at each $t\in\mathrm{Node}(\Gamma)$ along the way. The restriction
	\[
		L|_{\mathrm{REdge}(\Gamma, r)}\colon \mathrm{REdge}(\Gamma, r)\to \mathrm{Leaf}(\Gamma, r)
	\]
	is bijective.
\end{definition}
\begin{definition}
Let $I$ be a finite set. A \textit{parenthesised permutation} over $I$ is a triple $s = (\Gamma, r, \ell)$ where $(\Gamma, r)$ is a rooted planar binary tree, and $\ell\colon \mathrm{Leaf}(\Gamma, r) \to I$ is a bijection called a \textit{labelling}. We denote by $\mathrm{PP}(I)$ the set of parenthesised permutations over $I$. We put
\[
	\hat{\ell} = \ell\circ L\colon \mathrm{Edge}(\Gamma) \to I
\]
so that $\hat{\ell}|_{\mathrm{REdge}(\Gamma, r)}$ is again bijective.
\end{definition}

We write a parenthesised permutation as $(3(21))4$, which is understood as the rooted planar binary tree in the following picture.
\[\begin{tikzpicture}[scale=0.25, line width = 0.6]
	\node[below] at (0,0) {3};
	\node[below] at (2,0) {2};
	\node[below] at (4,0) {1};
	\node[below] at (6,0) {4};
	\node[above] at (3,4.4) {$r$};
	\draw (0,0) -- (3, 3) -- (3, 4.4);
	\draw (3,3) -- (6,0);
	\draw (2,2) -- (4,0);
	\draw (2,0) -- (3,1);
\end{tikzpicture}\]

Now let $\Sigma$ be any smooth, connected, oriented surface equipped with a base point $*\in\Sigma$ with a non-zero tangent vector $v$ at $*$. Take a (smooth) local coordinate $z\colon \Sigma \supset U \to \mathbb{C}$ with
\begin{align}\label{eq:loccoord}
	z(*) = 0\quad\mbox{and}\quad (dz)_*(v) = \frac{\partial}{\partial z}\Big|_0 \in T_0\mathbb{C}\,.
\end{align}
In the case of $\Sigma = \mathbb{C}$, we conventionally take $* = 0$, $v = \frac{\partial}{\partial z}\Big|_0$, $U = \mathbb{C}$ and $z = \id_\mathbb{C}\colon \mathbb{C} \to \mathbb{C}$.

\begin{definition}
Let $(z_i)_{i\in I}$ be the local coordinate on $U^I \subset \Sigma^I$ consisting of copies of $z$ above. For $s\in \mathrm{PP}(I)$, we define a new coordinate $(u^s_i)_{i\in I}$ on $U^I \cap \mathrm{Conf}_I(\Sigma)$ by the following rule. For each $i\in I$,
\begin{enumerate}[(1)]
	\item If $(\hat{\ell}|_{\mathrm{REdge}(\Gamma, r)})^{-1}(i) = \mathsf{right}(t_a)$ for a (unique) $t_a \in \mathrm{Node}(\Gamma)$, and
	\begin{enumerate}[(a)]
		\item If there is $t_b \in \mathrm{Node}(\Gamma)$ one edge closer from $t_a$ to $r$, we set
		\[
			u^s_i = \frac{z_{\hat{\ell}(\mathsf{right}(t_a))} - z_{\hat{\ell}(\mathsf{left}(t_a))}}{z_{\hat{\ell}(\mathsf{right}(t_b))} - z_{\hat{\ell}(\mathsf{left}(t_b))}};
		\]
		\item If not, we set $u^s_i = z_{\hat{\ell}(\mathsf{right}(t_a))} - z_{\hat{\ell}(\mathsf{left}(t_a))}$;
	\end{enumerate}
	\item If $(\hat{\ell}|_{\mathrm{REdge}(\Gamma, r)})^{-1}(i) = e_r$, we set $u^s_i = z_{\hat{\ell}(e_r)}$.
\end{enumerate}
\end{definition}

\begin{lemma}\label{lem:zu}
For $s\in\mathrm{PP}(I)$ and $i\in I$, we have
\[
	z_i = u^s_{\hat\ell(e_r)} + \sum_{\substack{t\colon t\in\mathrm{Node}(\Gamma)\\ \ell^{-1}(i)\in\mathrm{Leaf}(\mathsf{right}(t))}}\prod_{\substack{\tau \in\mathrm{Node}(\Gamma)\\ \tau \geq t}} u^s_{\hat\ell(\mathsf{right}(\tau))}.
\]
In particular, each $z_i$ is a polynomial in $(u_j^s)_{j\in I}$ of degree $\geq 1$.
\end{lemma}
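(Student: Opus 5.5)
I will prove the formula by induction along the path from the leaf $\ell^{-1}(i)$ up to the root $r$, using the defining rule for $(u^s_j)_{j\in I}$. The key quantity is, for each $t\in\mathrm{Node}(\Gamma)$, the difference
\[
	D_t := z_{\hat\ell(\mathsf{right}(t))} - z_{\hat\ell(\mathsf{left}(t))}.
\]
The first step is to show that
\[
	D_t = \prod_{\tau\in\mathrm{Node}(\Gamma),\ \tau\geq t} u^s_{\hat\ell(\mathsf{right}(\tau))}.
\]
This follows by descending induction on $t$. If $t$ is the top node, so that $\mathsf{up}(t)=e_r$, then case (1)(b) of the definition gives $u^s_{\hat\ell(\mathsf{right}(t))} = D_t$. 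Otherwise, let $t_b$ be the node immediately above $t$. Case (1)(a) gives $u^s_{\hat\ell(\mathsf{right}(t))} = D_t/D_{t_b}$, and we multiply by the inductive expression for $D_{t_b}$. Here I use that the nodes $\tau\geq t$ form a chain, namely the path from $t$ to $r$, and that each $\tau$ on it contributes the variable indexed by $\hat\ell(\mathsf{right}(\tau))$. This is consistent because $\hat\ell|_{\mathrm{REdge}}$ is a bijection, so every $j\in I$ arises from exactly one right edge or from $e_r$.

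The second step is to telescope. Let $\ell^{-1}(i)=x$ and consider the nodes $t$ on the path from $x$ to $r$. At each such $t$, the leaf $x$ lies either below $\mathsf{left}(t)$ or below $\mathsf{right}(t)$. By definition of $L$, we have $\hat\ell(\mathsf{up}(t)) = \hat\ell(\mathsf{left}(t))$. Hence, moving from edge $\mathsf{left}(t)$ or $\mathsf{right}(t)$ up to $\mathsf{up}(t)$ changes the quantity $z_{\hat\ell(\cdot)}$ by $0$ if $x$ is below the left edge, and by $D_t$ if $x$ is below the right edge. Starting from the leaf edge of $x$, whose $\hat\ell$-value is $i$, and ending at $e_r$, whose value is $\hat\ell(e_r)$ with $z_{\hat\ell(e_r)} = u^s_{\hat\ell(e_r)}$ by case (2), we obtain
\[
	z_i = u^s_{\hat\ell(e_r)} + \sum_{t:\ x\in\mathrm{Leaf}(\mathsf{right}(t))} D_t.
\]
Substituting the first step gives the claimed formula.

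The "in particular" statement follows at once: each summand is a monomial of positive degree, and $u^s_{\hat\ell(e_r)}$ has degree $1$. The only delicate point is the bookkeeping in the first step, namely checking that the denominator in case (1)(a) is exactly $D_{t_b}$ for the parent node. This is immediate from the definition, so I expect no real obstacle beyond careful indexing.
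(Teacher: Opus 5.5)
Your proof is correct and is essentially the paper's argument: both rest on the product identity $z_{\hat\ell(\mathsf{right}(t))}-z_{\hat\ell(\mathsf{left}(t))}=\prod_{\tau\geq t}u^s_{\hat\ell(\mathsf{right}(\tau))}$ together with $\hat\ell(\mathsf{up}(t))=\hat\ell(\mathsf{left}(t))$. The difference is only organisational. The paper inducts on the distance from the right edge labelled $i$ to $r$, passing from $i$ to $\hat\ell(\mathsf{left}(t_a))$, and invokes the product identity without a separate proof. You prove that identity explicitly by descending induction and then telescope along the whole leaf-to-root path, which is a slightly cleaner presentation of the same computation.
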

\noindent Proof. Let $d$ be the combinatorial distance between $(\hat{\ell}|_{\mathrm{REdge}(\Gamma, r)})^{-1}(i)$ and $r$. We proceed by induction on $d$.
\begin{itemize}
	\item If $d = 0$, it means that $(\hat{\ell}|_{\mathrm{REdge}(\Gamma, r)})^{-1}(i) = e_r$. In this case, we have $u^s_i = z_{\hat{\ell}(e_r)} = z_i$. On the other hand, there are no $t\in\mathrm{Node}(\Gamma)$ with $\ell^{-1}(i)\in\mathrm{Leaf}(\mathsf{right}(t))$. The claim is true in this case.
	\item Suppose $d > 0$. We have $(\hat{\ell}|_{\mathrm{REdge}(\Gamma, r)})^{-1}(i) = \mathsf{right}(t_a)$ for some $t_a \in \mathrm{Node}(\Gamma)$. 
	
	Consider the case (b) in the above. Then, we have
	\[
		z_i = u^s_{\hat{\ell}(\mathsf{right}(t_a))} + z_{\hat{\ell}(\mathsf{left}(t_a))}.
	\]
	Since $(\hat{\ell}|_{\mathrm{REdge}(\Gamma, r)})^{-1}\hat{\ell}(\mathsf{left}(t_a)) = e_r$, we have
	\[
		z_i = u^s_{\hat{\ell}(\mathsf{right}(t_a))} + u^s_{\hat\ell(e_r)},
	\]
	which coincides with the right-hand side of the statement. Next, consider the case (a) in the above, and take $t_b$ accordingly. We have
	\begin{align}\label{eq:ziu}
		z_i = u^s_{\hat{\ell}(\mathsf{right}(t_a))} \cdot (z_{\hat{\ell}(\mathsf{right}(t_b))} - z_{\hat{\ell}(\mathsf{left}(t_b))}) + z_{\hat{\ell}(\mathsf{left}(t_a))}
	\end{align}
	by the definition. The edge $(\hat{\ell}|_{\mathrm{REdge}(\Gamma, r)})^{-1}\hat{\ell}(\mathsf{left}(t_a))$ has combiantorial distance $<d$ since it is placed ``upstream'' of $t_a$. Now we invoke the induction hypothesis to obtain
	\begin{align*}
		z_{\hat{\ell}(\mathsf{left}(t_a))} &= u^s_{\hat\ell(e_r)} + \sum_{\substack{t\colon t\in\mathrm{Node}(\Gamma)\\ L(\mathsf{left}(t_a))\in\mathrm{Leaf}(\mathsf{right}(t))}}\prod_{\substack{\tau \in\mathrm{Node}(\Gamma)\\ \tau \geq t}} u^s_{\hat\ell(\mathsf{right}(\tau))}\\
		&= u^s_{\hat\ell(e_r)} - \prod_{\substack{\tau \in\mathrm{Node}(\Gamma)\\ \tau \geq t_a}} u^s_{\hat\ell(\mathsf{right}(\tau))} + \sum_{\substack{t\colon t\in\mathrm{Node}(\Gamma)\\ \ell^{-1}(i)\in\mathrm{Leaf}(\mathsf{right}(t))}}\prod_{\substack{\tau \in\mathrm{Node}(\Gamma)\\ \tau \geq t}} u^s_{\hat\ell(\mathsf{right}(\tau))}\\
		&= u^s_{\hat\ell(e_r)} - u^s_{\hat\ell(\mathsf{right}(t_a))} \prod_{\substack{\tau \in\mathrm{Node}(\Gamma)\\ \tau \geq t_b}} u^s_{\hat\ell(\mathsf{right}(\tau))} + \sum_{\substack{t\colon t\in\mathrm{Node}(\Gamma)\\ \ell^{-1}(i)\in\mathrm{Leaf}(\mathsf{right}(t))}}\prod_{\substack{\tau \in\mathrm{Node}(\Gamma)\\ \tau \geq t}} u^s_{\hat\ell(\mathsf{right}(\tau))}.
	\end{align*}
	On the other hand, the product $\prod_{\substack{\tau \in\mathrm{Node}(\Gamma)\\ \tau \geq t_b}}u^s_{\hat\ell(\mathsf{right}(\tau))} $ is exactly $z_{\hat{\ell}(\mathsf{right}(t_b))} - z_{\hat{\ell}(\mathsf{left}(t_b))}$ by the procedure (a) and (b) above. Substituting this to \eqref{eq:ziu} yields the result.
	\end{itemize}
This completes the proof.\qed

\begin{example}
Take $s = (3(21))4$. We have
\[
	u^s_1 = \frac{z_1 - z_2}{z_2 - z_3},\; u^s_2 = \frac{z_2 - z_3}{z_4 - z_3},\; u^s_3 = z_3,\; u^s_4 = z_4- z_3.
\]
This implies
\[
	z_1 = u^s_4u^s_2 u^s_1 + u^s_4u^s_2 + u^s_3 ,\; z_2 = u^s_4u^s_2 + u^s_3,\; z_3 = u^s_3, \;z_4 = u^s_4 + u^s_3.
\]
\end{example}

\begin{lemma}\label{lem:divis}
Let $s\in\mathrm{PP}(I)$ and $i\neq j \in I$. Then, we have
\[
	z_i - z_j = p_{ij}^s \cdot \prod_{\substack{\tau\in\mathrm{Node}(\Gamma)\\ \ell^{-1}(i)\lor \ell^{-1}(j)\leq \tau}} u^s_{\hat\ell(\mathsf{right}(\tau))}
\]
where $p_{ij}^s$ is a polynomial in $(u^s_i)_{i\in I}$ with $p(0,\dotsc, 0) = \pm 1$. The sign is determined by the relative position of $\ell^{-1}(i)$ to $\ell^{-1}(j)$ within $s$.
\end{lemma}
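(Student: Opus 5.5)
Let $t = \ell^{-1}(i)\lor\ell^{-1}(j)$. Since $i\neq j$, $t$ is a trivalent vertex, and $\ell^{-1}(i)$, $\ell^{-1}(j)$ lie in different subtrees below $\mathsf{left}(t)$ and $\mathsf{right}(t)$. Without loss of generality, $\ell^{-1}(i)\in\mathrm{Leaf}(\mathsf{right}(t))$ and $\ell^{-1}(j)\in\mathrm{Leaf}(\mathsf{left}(t))$; exchanging $i$ and $j$ only flips the sign. Write $\Pi_\tau = \prod_{\sigma\geq\tau}u^s_{\hat\ell(\mathsf{right}(\sigma))}$. The plan is to subtract the two expansions from Lemma \ref{lem:zu}:
\[
	z_i - z_j = \sum_{\substack{\tau\in\mathrm{Node}(\Gamma)\\ \ell^{-1}(i)\in\mathrm{Leaf}(\mathsf{right}(\tau))}}\Pi_\tau - \sum_{\substack{\tau\in\mathrm{Node}(\Gamma)\\ \ell^{-1}(j)\in\mathrm{Leaf}(\mathsf{right}(\tau))}}\Pi_\tau .
\]
The term $u^s_{\hat\ell(e_r)}$ cancels.

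The first step is to split the index sets. If $\tau>t$, then $\ell^{-1}(i)$ and $\ell^{-1}(j)$ lie below the same child edge of $\tau$, so they contribute identically to both sums, and these terms cancel. If $\tau$ is not comparable with $t$ and not below it, it contributes to neither sum. The remaining terms are:
\begin{itemize}
	\item the node $\tau=t$ itself, which contributes $\Pi_t$ to the $i$-sum only;
	\item nodes $\tau<t$ lying on the path from $t$ to $\ell^{-1}(i)$, which contribute to the $i$-sum;
	\item nodes $\tau<t$ lying on the path from $t$ to $\ell^{-1}(j)$, which contribute to the $j$-sum.
\end{itemize}
Each node $\tau\leq t$ satisfies $\Pi_\tau=\Pi_t\cdot\prod_{t>\sigma\geq\tau}u^s_{\hat\ell(\mathsf{right}(\sigma))}$. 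Hence we can factor
\[
	z_i-z_j=\Pi_t\cdot p^s_{ij},
\]
where $p^s_{ij}=1+(\text{sum of monomials each containing at least one factor } u^s_{\hat\ell(\mathsf{right}(\sigma))} \text{ with } \sigma<t)$. In particular $p^s_{ij}(0,\dotsc,0)=1$, and the product $\Pi_t$ is exactly the one in the statement, since its index set is $\{\tau : \ell^{-1}(i)\lor\ell^{-1}(j)\leq\tau\}$. The sign $\pm1$ records which of $i,j$ lies under $\mathsf{right}(t)$.

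The only delicate point is the bookkeeping of the cancellation for nodes above $t$. It rests on the observation that $\ell^{-1}(i)\in\mathrm{Leaf}(\mathsf{right}(\tau))$ if and only if $\ell^{-1}(j)\in\mathrm{Leaf}(\mathsf{right}(\tau))$ whenever $\tau>t$, because both leaves lie under the same edge of $\tau$, namely the one leading towards $t$. I would also double-check that the polynomiality of $p^s_{ij}$ is immediate: every remaining monomial $\Pi_\tau$ with $\tau\leq t$ is divisible by $\Pi_t$.
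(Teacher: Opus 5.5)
Your proof is correct and follows the paper's argument: subtract the two expansions from Lemma~\ref{lem:zu}, cancel the nodes strictly above the join $\ell^{-1}(i)\lor\ell^{-1}(j)$, and factor out the product over nodes $\geq$ the join, with the join itself supplying the constant term of $p^s_{ij}$. The only difference is cosmetic: you put $\ell^{-1}(i)$ under the right edge and get $p^s_{ij}(0,\dotsc,0)=1$, whereas the paper puts it on the left and gets $-1$. Also, strictly speaking, not every node below the join on the path to $\ell^{-1}(i)$ contributes, only those whose right edge leads towards $\ell^{-1}(i)$; this does not affect the argument.
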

\noindent Proof. Without loss of generality, we assume $\ell^{-1}(i)$ is on the left side relative to $\ell^{-1}(j)$. By the lemma above, we have
\begin{align*}
	z_i - z_j &= \sum_{\substack{t\colon t\in\mathrm{Node}(\Gamma)\\ \ell^{-1}(i)\in\mathrm{Leaf}(\mathsf{right}(t))}}\prod_{\substack{\tau \in\mathrm{Node}(\Gamma)\\ \tau \geq t}} u^s_{\hat\ell(\mathsf{right}(\tau))} -  \sum_{\substack{t\colon t\in\mathrm{Node}(\Gamma)\\ \ell^{-1}(j)\in\mathrm{Leaf}(\mathsf{right}(t))}}\prod_{\substack{\tau \in\mathrm{Node}(\Gamma)\\ \tau \geq t}} u^s_{\hat\ell(\mathsf{right}(\tau))}\\
	&= \sum_{\substack{t< \ell^{-1}(i)\lor \ell^{-1}(j)\\ \ell^{-1}(i)\in\mathrm{Leaf}(\mathsf{right}(t))}}\prod_{\substack{\tau \in\mathrm{Node}(\Gamma)\\ \tau \geq t}} u^s_{\hat\ell(\mathsf{right}(\tau))} -\prod_{\substack{\tau \in\mathrm{Node}(\Gamma)\\ \tau \geq \ell^{-1}(i)\lor \ell^{-1}(j)}} u^s_{\hat\ell(\mathsf{right}(\tau))} -  \sum_{\substack{t< \ell^{-1}(i)\lor \ell^{-1}(j)\\ \ell^{-1}(j)\in\mathrm{Leaf}(\mathsf{right}(t))}}\prod_{\substack{\tau \in\mathrm{Node}(\Gamma)\\ \tau \geq t}} u^s_{\hat\ell(\mathsf{right}(\tau))}\\
	&= p_{ij}^s\cdot \prod_{\substack{\tau \in\mathrm{Node}(\Gamma)\\ \ell^{-1}(i)\lor \ell^{-1}(j)\leq \tau }} u^s_{\hat\ell(\mathsf{right}(\tau))}
\end{align*}
where
\[
	p_{ij}^s = \sum_{\substack{t< \ell^{-1}(i)\lor \ell^{-1}(j)\\ \ell^{-1}(i)\in\mathrm{Leaf}(\mathsf{right}(t))}}\prod_{\substack{\ell^{-1}(i)\lor \ell^{-1}(j) > \tau \geq t}} u^s_{\hat\ell(\mathsf{right}(\tau))} - 1 - \sum_{\substack{t< \ell^{-1}(i)\lor \ell^{-1}(j)\\ \ell^{-1}(j)\in\mathrm{Leaf}(\mathsf{right}(t))}}\prod_{\ell^{-1}(i)\lor \ell^{-1}(j) > \tau \geq t}u^s_{\hat\ell(\mathsf{right}(\tau))}.
\]
Now we compute $p_{ij}^s(0,\dotsc, 0)$. If there is $t$ such that $t< \ell^{-1}(i)\lor \ell^{-1}(j)$ in the above summations, the above products would not be empty since we can take $\tau$ as $t$, so $p_{ij}^s(0,\dotsc, 0) = -1$. If not, the summation is empty, so we have $p_{ij}^s(0,\dotsc, 0) = -1$. This completes the proof.\qed

\begin{definition}
Let $(u^s_i;\lambda_i)_{i\in I}$ be the coordinate of
\[
	TU^I \cap \mathrm{Conf}^\mathrm{fr}_I(\Sigma)\cong (U^I \cap \mathrm{Conf}_I(\Sigma))\times (\mathbb{C}\setminus\{0\})^I
\]
where $(u^s_i)_{i\in I}$ is taken as above, and $\lambda_i$ is as in \eqref{eq:assoccoord}. For $s\in \mathrm{PP}(I)$, we define $V_s \subset TU^I \cap \mathrm{Conf}^\mathrm{fr}_I(\Sigma)$ as the subset satisfying the conditions
\[
	\begin{cases}
	u_i^s, \lambda_i \in \mathbb{R},\\
	0 < u^s_i < \delta_s & i\in I\mbox{ with } \hat{\ell}^{-1}(i) \neq e_r,\\
	\lambda_i > 0 & i\in I,
	\end{cases}
\]
where we fix $\delta_s > 0$ such that $p_{ij}^s$ in the lemma above does not vanish on $V_s$ for all $i\neq j\in I$. Then, the map
\[
	(u^s_i; \lambda_i)_{i\in I} \colon V_s \to (0,\delta_s)^I \times \mathbb{R}_+^I
\]
is a homeomorphism and therefore $V_s$ is contractible. We take $U_s$ to be a simply-connected small neighbourhood of $V_s$ inside $\mathrm{Conf}^\mathrm{fr}_I(\Sigma)$. Then, it is justified to refer $U_s$ as a ``base point" of the fundamental groupoid of $\mathrm{Conf}^\mathrm{fr}_I(\Sigma)$.
\end{definition}

\subsection{Operad Modules}
Now we explain the operad module $\mathbf{PaB}^f_\Sigma$ of framed braids on $\Sigma$. Recall that an operad (module) is a collection of objects indexed by finite sets.

\begin{definition}
For a finite set $I$, the groupoid $\mathbf{PaB}^f_\Sigma(I)$ is defined as the fundamental groupoid of $\mathrm{Conf}_I^\mathrm{fr}(\Sigma)$ with base points $\{U_s\}_{s\in \mathrm{PP}(I)}$ as above. We denote its Malcev completion over $\mathbb{C}$ by $\widehat{\mathbf{PaB}}{}^f_\Sigma(I)$, and set
\[
	\mathbf{PaB}_\Sigma^f := \{\mathbf{PaB}_\Sigma^f(I)\}_I,
\]
which is a collection over all finite sets.
\end{definition}

For any choice of a local coordinate $z$ as above, the fundamental groupoid $\mathbf{PaB}^f_\Sigma(I)$ is canonically isomorphic to each other. The automorphism group $\mathbf{PaB}^f_\Sigma(I)(s,s)$ at an object $s \in\mathrm{PP}(I)$ is isomorphic to the \textit{framed pure braid group} $\mathrm{PB}_{g,n}^f$ of genus $g$ with $n = \# I$ strands.

In the special case of $\Sigma = \mathbb{C}$, $\mathbf{PaB}_\mathbb{C}^f$ is endowed with a structure of an operad by the doubling (and more general compositions) of braids, and each $\mathbf{PaB}_\Sigma^f$ is an operad module over $\mathbf{PaB}_\mathbb{C}^f$. By the functoriality of the completion, $\widehat{\mathbf{PaB}}{}^f_\Sigma$ is an operad module over $\widehat{\mathbf{PaB}}{}^f_\mathbb{C}$. When $\Sigma$ is closed of genus $g$, $\mathbf{PaB}_\Sigma^f$ is denoted by $\mathbf{PaB}_g^f$ in \cite{gonzalez}, where the choice of genus $g$ surface $\Sigma$ with a framed base point is implicitly fixed.\\

The following is the graded counterpart of $\mathbf{PaB}_\mathbb{C}^f$.

\begin{definition}\label{def:frameddk}
Let $I$ be a finite set.
\begin{itemize}
	\item The \textit{framed Drinfeld--Kohno Lie algebra} $\mathfrak{t}^f_I$ is generated by $t_{ij}$ for $i,j\in I$ together with relations
	\begin{align*}
	\begin{aligned}
		&t_{ij} = t_{ji},&\\
		&[t_{ij}, t_{kl}] = 0& &\mbox{if } \{i,j\}\cap\{k,l\}=\varnothing,\mbox{ and}\\
		&[t_{ij},t_{ik} + t_{jk}] = 0& &\mbox{if } \{i,j\}\cap\{k\}=\varnothing.
	\end{aligned}
	\end{align*}
	We have the natural morphism $\iota\colon \mathfrak{t}^f_I \to \mathfrak{t}_{g,I}^f$ of Lie algebras.
	\item The groupoid $\mathbb{G}\mathbf{PaCD}^f(I)$ is defined as follows: the set of objects is the same as $\mathbf{PaB}^f_\mathbb{C}$. For parenthesised permutations $s$ and $s'$ over $I$, the morphism set $\mathbb{G}\mathbf{PaCD}^f(I)(s,s')$ is $\exp(\hat{\mathfrak{t}}_I^f)$.
\end{itemize}
\end{definition}
Then, $\mathbb{G}\mathbf{PaCD}^f$ is endowed with a structure of an operad similar to $\mathbf{PaB}_\mathbb{C}^f$. For $\Sigma$ closed of genus $g$, the following is the graded counterpart of $\mathbf{PaB}_\Sigma^f$.

\begin{definition}
For a finite set $I$, the groupoid $\mathbb{G}\mathbf{PaCD}^f_g(I)$ is defined as follows: the set of objects is the same as $\mathbf{PaB}^f_\Sigma$. For parenthesised permutations $s$ and $s'$ over $I$, the morphism set $\mathbb{G}\mathbf{PaCD}^f_g(I)(s,s')$ is $\exp(\hat{\mathfrak{t}}_{g,I}^f)$. In this case, $\mathbb{G}\mathbf{PaCD}^f_g$ is an operad module over $\mathbb{G}\mathbf{PaCD}^f$ via $\iota \colon \mathfrak{t}^f_I \to \mathfrak{t}_{g,I}^f$.
\end{definition}

With these objects prepared, Gonzalez \cite{gonzalez} generalised Drinfeld associators to higher genera as follows.

\begin{definition}
Let the underlying surface of $\Sigma$ be closed of genus $g$. A \textit{Gonzalez--Drinfeld associator} for $\Sigma$ is an isomorphism
\[
	W\colon (\widehat{\mathbf{PaB}}_\mathbb{C}{}^f, \widehat{\mathbf{PaB}}{}^f_\Sigma) \xrightarrow{\cong} (\mathbb{G}\mathbf{PaCD}^f, \mathbb{G}\mathbf{PaCD}^f_g)
\]
of (pointed) operad modules over the category of completed groupoids. By definition, $W$ consists of the isomorphism
\[
	\Phi\colon \widehat{\mathbf{PaB}}_\mathbb{C}{}^f \xrightarrow{\cong} \mathbb{G}\mathbf{PaCD}^f
\]
of operads, which we call the \textit{accompanying Drinfeld associator} of $W$, and the isomorphism
\[
	\hat Z\colon \widehat{\mathbf{PaB}}{}^f_\Sigma\xrightarrow{\cong}  \mathbb{G}\mathbf{PaCD}^f_g
\]
of operad modules compatible with $\Phi$.
\end{definition}

\subsection{Main Result}
From now on, we will show the following result, answering affirmatively to Conjecture 3.21 in \cite{gonzalez}. Recall that, in the construction of $\vec\alpha_\mathrm{KZ}^{(n)}$ (or the original $\alpha_\mathrm{KZ}^{(n)}$), we fixed the following data:
\begin{itemize}
	\item a closed Riemann surface $C$ and a base point $*\in C$,
	\item a marking of $(C, *)$, which is an isomorphism $\pi_1(C,*) \cong \pi_g$, and
	\item a base point $\tilde *\in \tilde C$ lifting $*\in C$.
\end{itemize}
We further choose a non-zero tangent vector $v$ at $*\in C$. Put $\Sigma = (C, *, v)$, where $C$ has the induced smooth structure.

\begin{theorem}\label{thm:kzbassoc} 
Given the above data, the framed KZB connection $\{\vec{\alpha}_\mathrm{KZ}^{(n)}\}_{n\geq 0}$ gives rise to a Gonzalez--Drinfeld associator for $\Sigma$. Furthermore, its accompanying Drinfeld associator is the KZ associator.
\end{theorem}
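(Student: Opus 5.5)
The plan is to construct $\hat Z$ as the monodromy (parallel transport) of the flat connection $\nabla - \vec\alpha_\mathrm{KZ}^{(n)}$ between the base points $U_s$, suitably renormalised at each $U_s$. $\Phi$ is built in the same way from the framed KZ connection on $\mathrm{Conf}^\mathrm{fr}_I(\mathbb{C})$.

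\textbf{Step 1: the operad $\Phi$.} I would define $\Phi$ as the framed KZ associator. This uses the connection $\sum_{i<j}\frac{d(z_i-z_j)}{2\pi\sqrt{-1}(z_i-z_j)}t_{ij}+\sum_i\frac{d\log\lambda_i}{4\pi\sqrt{-1}}t_{ii}$ on $\mathrm{Conf}^\mathrm{fr}_I(\mathbb{C})$. It is the standard Drinfeld construction, extended to framings by the central terms $t_{ii}$. That this is an operad isomorphism I take as known from \cite{drinfeld, barnatan, gonzalez}. This also accounts for the second sentence of the theorem, provided Step 3 shows that the local behaviour of $\vec\alpha_\mathrm{KZ}$ near collisions is exactly this framed KZ connection.

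\textbf{Step 2: canonical solutions and $\hat Z$ on morphisms.} For each $s\in\mathrm{PP}(I)$, I would work in the coordinates $(u^s_i;\lambda_i)$ on $U_s$. Using Lemmas \ref{lem:zu} and \ref{lem:divis}, I would expand $\vec\alpha_\mathrm{KZ}^{(n)}$ near the corner $\{u^s_i=0\}$, i.e.\ the Fulton--MacPherson stratum where points collide according to the tree of $s$. The claim to prove is that it takes the form
\[
\sum_{i}\frac{du^s_i}{2\pi\sqrt{-1}\,u^s_i}\,T^s_i+\sum_i\frac{d\log\lambda_i}{4\pi\sqrt{-1}}t_{ii}+(\text{holomorphic in }u^s),
\]
where $T^s_i$ is the sum of $t_{jk}$ over pairs whose join is the node owning $u^s_i$. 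Iterating Theorem \ref{thm:conv}(2), with Theorem \ref{thm:kzb}(5) for the $\lambda$-part, is what identifies these residues. The $T^s_i$ commute pairwise, so there is a unique solution $G_s=H_s\cdot\prod_i (u^s_i)^{T^s_i/2\pi\sqrt{-1}}\prod_i\lambda_i^{t_{ii}/4\pi\sqrt{-1}}$ with $H_s$ holomorphic and $H_s=1$ on the corner. For the remaining coordinate $u^s_{\hat\ell(e_r)}=z_{\hat\ell(e_r)}$, I would normalise at $*$ with framing $v$ via the coordinate \eqref{eq:loccoord}, lifting to $\mathcal{D}\ni\tilde*$. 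I would then set $\hat Z(\gamma)=G_{s'}^{-1}\,\mathrm{hol}_\gamma\,G_s$ for a path $\gamma$ from $U_s$ to $U_{s'}$. This is well defined on the groupoid by flatness (Theorem \ref{thm:kzb}(4)) and the simple connectivity of $U_s$, and it is a functor into $\mathbb{G}\mathbf{PaCD}^f_g$. Functoriality extends it to the Malcev completion.

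\textbf{Step 3: compatibility with the module structure (main obstacle).} I must show $\hat Z(\gamma\circ_i\beta)=\hat Z(\gamma)\circ_i\Phi(\beta)$ for $\beta\in\mathbf{PaB}^f_\mathbb{C}$. Braids are generated by doublings and insertions of small configurations, so this reduces to two things. First, pulling back by $f_\varepsilon$ and its iterates sends $\vec\alpha^{(n+1)}_\mathrm{KZ}$ to $(\vec\alpha^{(n)}_\mathrm{KZ})^{1,\dots,n(n+1)}$ plus the rescaled framed KZ connection in the collision variable, up to $O(\varepsilon)$; this is Theorem \ref{thm:conv}. Second, the canonical solutions factor accordingly: $G_{s\circ_i s''}=G_s^{\,\circ_i}\cdot G^{\mathrm{KZ}}_{s''}$ asymptotically. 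The delicate point is exchanging the limit $\varepsilon\to0$ with parallel transport along paths approaching the boundary, and controlling the $u^{T/2\pi\sqrt{-1}}$ singular factors uniformly. I would handle this with the standard asymptotic-solution argument (as in Drinfeld and \cite{cee}), carried out on a product of small polydiscs in the $u^s$-coordinates. Symmetric-group equivariance follows from Theorem \ref{thm:conv}(1).

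\textbf{Step 4: bijectivity.} Finally, $\hat Z$ is an isomorphism because it is a filtered morphism whose associated graded on generators sends the $A$-, $B$-loops and elementary braids to $x_i^a$, $y_i^a$ and $t_{ij}$ up to nonzero scalars and higher-order terms. Lemma \ref{lem:beta}(3) and Lemma \ref{lem:int} give the $A$-periods, the transformation rule $e^{x_i^a}$ gives the $B$-monodromy, and Lemma \ref{lem:degen}(3) gives the residues. Since $\mathrm{gr}\,\widehat{\mathrm{PB}}^f_{g,n}\cong\mathfrak{t}^f_{g,n}$ (Gonzalez), this identifies the associated graded and forces $\hat Z$ to be an isomorphism.
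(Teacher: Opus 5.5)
Your overall architecture matches the paper's: canonical solutions normalised by their principal parts at the base points $U_s$, $Z(b)=F_{s'}^{-1}(F_s\ast b)$, compatibility checked by degeneration, and bijectivity via the associated graded. Step 4, however, has a genuine gap. You quote $\mathrm{gr}\,\widehat{\mathrm{PB}}^f_{g,n}\cong\mathfrak{t}^f_{g,n}$ and then only check that degree-one generators are hit. The paper does not have that isomorphism as an input; it comes out of this very proof. So as written you have only shown that $\mathrm{gr}\,\hat Z$ is surjective.

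Injectivity is the missing idea. The paper uses the presentation of $\mathrm{PB}^f_{g,n}$ from \cite{bg}, repeating Lemma 5 of \cite{enriquez}, to build a Lie algebra map $T\colon\mathfrak{t}^f_{g,n}\to\mathrm{gr}(\mathrm{PB}^f_{g,n})\otimes_\mathbb{Z}\mathbb{C}$. It satisfies $T(x_i^a)=[X_a^i]_1$ and $T(y_i^a)=[Y_a^i]_1$, and $T(t_{ii})=-2[F^i]_1$ if $g=1$. Combined with an automorphism $\theta$ of $\mathfrak{t}^f_{g,n}$ (well defined because $\tau_{ab}=\tau_{ba}$), this gives $T\circ\theta^{-1}\circ\mathrm{gr}\,\hat Z=\mathrm{id}$.

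Your description of the degree-one images is also inaccurate. The $A$-loops go to $y_i^a$, but the $B$-loops go to $x_i^a+\sum_b\tau_{ab}y_i^b$. The $\tau$-term is the $B$-period of the degree-one part $\sum_b\omega_b y_i^b$ of $\beta_i$, so it has the same degree, not higher order; this is exactly why $\theta$ is needed. For $g=1$ you also need the fibre loops $F^i$, with $Z(F^i)=e^{t_{ii}/2}$, to reach $t_{ii}$.

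Step 3 is a plan rather than an argument, and it attributes to Theorem \ref{thm:conv} more than it says. The map $f_\varepsilon$ places the new point in the fixed direction $v_n$, so there is no free collision variable. A limit of the form ``operadic image plus rescaled framed KZ connection'' for inserting an arbitrary $\beta\in\mathbf{PaB}^f_\mathbb{C}$ would therefore be a new statement you would have to prove.

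The paper needs no such limit for the KZ part. Every base point $U_s$ sits over $U^n$, so an insertion $\mathrm{id}_s\circ_i\beta$ is a braid inside $U^n$. Lemma \ref{lem:globalR} gives a single gauge $R$, single-valued and holomorphic across the diagonals, with $F_s=RH_s$ for every $s$. Hence $Z(b)=H_{s'}^{-1}(H_s\ast b)=\iota(\Phi_\mathrm{KZ}(b))$ exactly.

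For doubling, the mechanism is $G_{\tilde s}=F_{\tilde s}J$, which is holomorphic across $\{z_n=z_{n+1}\}$ with $f_0^*G_{\tilde s}=F_s^{1,\dotsc,n-1,n(n+1)}$ (Lemma \ref{lem:diaghol}). The point your sketch misses is that $J$ is defined through the local coordinates $(z,\lambda)$ near $*$. Continuing it along a braid that leaves $U$ produces a correction $h=J^{-1}(J\ast b^{1,\dotsc,n-1,n(n+1)})$, and one must show $h\to 1$ as $z_{n+1}-z_n\to 0$. Compatibility with deletion of strands is also missing.

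A smaller point in Step 2: in several variables, commuting residues do not by themselves give a solution $H_s\prod_i(u_i^s)^{T_i^s/2\pi\sqrt{-1}}$ with $H_s$ holomorphic. You also need $[C_i|_{u_j=0},c_j]=0$, which is condition (3) of Lemma \ref{lem:cee}. It does follow from flatness once you note that $C_j|_{u_j=0}=c_j$ is constant, but it has to be said. Finally, the residues come directly from the holomorphy of $\vec\alpha^{(n)}_\mathrm{KZ}-\vec\omega^{(n)}_\mathrm{KZ}$ near $*$ together with Lemma \ref{lem:divis}, not from iterating Theorem \ref{thm:conv}.
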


The proof will be given along the $g=1$ case described in Section 4 of \cite{cee}, which is based on the original argument by Drinfeld \cite{drinfeld}. Although the argument is classical, we give a proof for the sake of clarity. We start with the following lemma, which is a variant of Proposition A.3 in \cite{cee}, and gives an improved estimate for the behaviour of the solution.

\begin{lemma}\label{lem:cee}
Let $A$ be a $\mathbb{Z}_{\geq 0}$-graded complete $\mathbb{C}$-algebra with each graded piece finite-dimensional, and denote by $A_{(>0)}$ the positive-degree part. Let $I$ be a finite set, and $u_i$ $(i\in I)$ formal variables. Suppose that $C_i\in A_{(>0)}\,\hat\otimes\, \mathbb{C}[[u_i]]_{i\in I}$ is given for $i\in I$, and
\begin{enumerate}[(1)]
	\item $u_i\frac{\partial C_j}{\partial u_i} - u_j\frac{\partial C_i}{\partial u_j} = [C_i, C_j]$,
	\item $C_i = c_i + u_i C'_i$ for some $c_i \in A_{(>0)}$ and $C'_i\in A_{(>0)}\,\hat\otimes \,\mathbb{C}[[u_i]]_{i\in I}$,
	\item $[C_i|_{u_j = 0}, c_j] = 0$, and
	\item $C_i$ has a positive convergence radius
\end{enumerate}
for any $i, j\in I$. Then, there uniquely exists $F \in A\,\hat{\otimes}\, \mathbb{C}[[u_i]][\log u_i]_{i\in I}$ such that
\begin{enumerate}[(a)]
	\item $u_i\frac{\partial F}{\partial u_i} = C_iF$,
	\item putting $P = \prod_{i \in I} u_i^{c_i}$, $FP^{-1} \in 1 + A_{(>0)}\,\hat{\otimes}\, \mathfrak{m}$, and 
	\item $FP^{-1}$ has a positive convergence radius,
\end{enumerate}
where $\mathfrak{m} = \Ker(\mathbb{C}[[u_i]]_{i\in I} \to \mathbb{C})$ is the augmentation ideal, and $u_i^{c_i} := e^{c_i \log u_i}$. We call such $P$ the principal part of the solution $F$.
\end{lemma}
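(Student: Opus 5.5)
We follow the strategy of Proposition A.3 in \cite{cee}: make the ansatz $F = H P$ with $P = \prod_{i\in I} u_i^{c_i}$, and reduce the problem to a regular recursion for $H \in 1 + A_{(>0)}\hat\otimes\mathfrak{m}$.

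First note that the $c_i$ commute pairwise. Condition (3) with the roles of $i,j$ chosen appropriately gives $[c_i,c_j] = [C_i|_{u_i=u_j=0}, c_j] = 0$, after further setting $u_i=0$ in (3). Hence $P$ is well defined, $u_i\partial_{u_i}P = c_iP = Pc_i$, and the $u_i^{c_i}$ commute. Equation (a) then becomes
\[
u_i\frac{\partial H}{\partial u_i} = C_i H - H c_i,
\]
equivalently $u_i\partial_{u_i}H - [c_i,H] = u_iC'_iH$. Note that $H$ involves no logarithms, which is why the ansatz gives uniqueness in the log-extended ring once we know $H$ is unique.

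Next, decompose everything by multidegree in $u$ and by degree in $A$. For a monomial $u^m$ with $m\neq 0$, the coefficient $H_m$ must satisfy
\[
m_iH_m - [c_i,H_m] = (\text{terms of } u_iC'_iH \text{ at } u^m).
\]
Since $c_i$ has positive degree, $\ad c_i$ is nilpotent on each finite truncation in $A$-degree. Hence $m_i - \ad c_i$ is invertible whenever $m_i\neq 0$, and its inverse is a finite Neumann series at each $A$-degree. The right-hand side only involves $H_{m'}$ with $m'< m$ (componentwise, since $u_i$ is factored out), so we can solve recursively. For each $m$, pick an $i$ with $m_i>0$ and define $H_m$ by this formula.

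The main obstacle is \emph{consistency}: showing that the equation for every other $j$ also holds, so that $H$ is independent of the choice of $i$ and solves all equations simultaneously. Here we use the flatness (1) together with (3). Set $E_j := u_j\partial_jH - C_jH + Hc_j$. Using (1) and $[c_i,c_j]=0$, one computes $u_i\partial_i E_j - C_iE_j + E_jc_i = u_j\partial_jE_i - C_jE_i + E_ic_j$. By induction on $m$, if all $E_i$ vanish below $u^m$ and $E_i$ vanishes at $u^m$ by construction, then $E_j$ at $u^m$ satisfies $(m_i - \ad c_i)(E_j)_m = 0$, so $(E_j)_m=0$. Condition (3) is what handles the boundary strata $m_j=0$: there the $j$-equation reads $-[c_j,H_m] = (\text{coefficient of }C_jH)$ restricted to $u_j=0$, and commutation of $C_i|_{u_j=0}$ with $c_j$ shows by induction that $H|_{u_j=0}$ commutes with $c_j$ and that $C'_j$-terms vanish appropriately. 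I expect this step to need the most care.

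For convergence (c), work in each finite-dimensional truncation $A/A_{(>N)}$ and use majorant series. If $\|C'_i\|$ is majorized by $M/(1-\rho^{-1}\sum u_k)$, the operator norm of $(m_i-\ad c_i)^{-1}$ grows at most polynomially in $1/m_i$ on a nilpotent truncation, and is in fact bounded by $K/|m|$ after choosing $i$ with $m_i\ge |m|/\#I$. The standard Cauchy--Kovalevskaya/Frobenius majorant argument then gives a positive radius. Uniqueness follows from the same recursion: the difference of two solutions gives $H_m$ in the kernel of $m_i-\ad c_i$, hence zero, and (b) fixes the constant term to be $1$.
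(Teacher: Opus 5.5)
Your argument is correct, but it takes a different route from the paper. The paper makes the same substitution $Y=FP^{-1}$ and then inducts on the degree in $A$ rather than on the $u$-multidegree. At $A$-degree $k$ the equation reads $\partial_{u_i}Y_{(k)}=\sum_{d<k}\bigl(C'_{i,(k-d)}Y_{(d)}+u_i^{-1}[c_{i,(k-d)},Y_{(d)}]\bigr)$. The right-hand side is already known, so the paper simply integrates from the origin, attributing integrability to (1). To see that the $u_i^{-1}$ terms are regular, it carries along the auxiliary invariant $[c_i,Y_{(k)}]|_{u_i=0}=0$, and this is the only place where (3) enters. Convergence then comes for free, since each $Y_{(k)}$ is a finite combination of iterated integrals of the $C_i$.

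Your Frobenius-type recursion instead rests on non-resonance: $m_i-\ad c_i$ is invertible because $\ad c_i$ raises degree. Your identity $u_i\partial_{u_i}E_j-C_iE_j+E_jc_i=u_j\partial_{u_j}E_i-C_jE_i+E_ic_j$ makes the compatibility of the $\#I$ equations fully explicit, using only (1) and $[c_i,c_j]=0$. The cost is the majorant estimate, and your bound $\|(m_i-\ad c_i)^{-1}\|\le K/|m|$ (obtained by choosing $m_i$ maximal) is exactly the right ingredient there.

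Two small corrections to your own commentary. First, the identity already gives $(E_j)_m=0$ when $m_j=0$, because only $m_i>0$ for the chosen $i$ is used. So no appeal to (3) is needed for the boundary strata. In fact (3) is a consequence of (1) and (2): restrict (1) to $u_j=0$, where $C_j|_{u_j=0}=c_j$ is constant. Second, in (c) you should state explicitly that the majorant solution has the form $(1-\#I\,t/\rho)^{-\kappa}$. Its radius depends only on the radius $\rho$ of the $C'_i$, not on the truncation-dependent constant $\kappa$, so the radius of $FP^{-1}$ does not shrink as $N\to\infty$.
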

\noindent Proof. Substituting $u_i = u_j = 0$ to (1) and using (2), we have $[c_i, c_j] = 0$. Therefore, the order of the product in $P$ does not matter. Then, the equations $Y := FP^{-1}$ has to satisfy are
\begin{align*}
	u_i \frac{\partial Y}{\partial u_i} &= u_i \frac{\partial F}{\partial u_i} P^{-1} - F u_i P^{-1}\frac{\partial P}{\partial u_i} P^{-1}\\
	&= C_iF P^{-1} - F P^{-1} c_i \\
	&= C_i Y - Y c_i\\
	&= (C_i - c_i) Y + [c_i, Y]\\
	&= u_i C'_i Y + [c_i, Y]
\end{align*}
for each $i\in I$. Write $Y = \sum_{k\geq 0} Y_{(k)}$ where $Y_{(k)}$ has degree $k$ in $A$. Now the equation above reads
\[
	\frac{\partial Y_{(k)}}{\partial u_i} = \sum_{0\leq d< k} \Big( {C'_i}_{(k-d)} Y_{(d)} + \frac{[{c_i}_{(k-d)}, Y_{(d)}]}{u_i} \Big)
\]
for $i\in I$ and $k\geq 0$. We will show that such $Y_{(k)}\in 1 + A_{(>0)}\,\hat{\otimes}\, \mathfrak{m}$ uniquely exists and $[c_i, Y_{(k)}]|_{u_i = 0} = 0$ holds for all $i\in I$ by induction on $k$.
\begin{itemize}
	\item For $k = 0$, the equation reads $\frac{\partial Y_{(0)}}{\partial u_i} = 0$. We have $Y_{(0)} = 1$. It clearly satisfies the induction hypothesis.
	\item Suppose $k>0$. By the hypothesis $[c_i, Y_{(d)}]|_{u_i = 0} = 0$ for $d < k$, the term $\frac{[{c_i}_{(k-d)}, Y_{(d)}]}{u_i}$ does not contain negative powers of $u_i$. Since the system is integrable by the condition (1), there uniquely exists the solution $Y_{(k)} \in 1 + A_{(>0)}\,\hat{\otimes}\, \mathfrak{m}$ to the above equation normalised by $Y_{(k)}(0,\dotsc, 0) = 0$. We check $[c_i, Y_{(k)}]|_{u_i = 0} = 0$. Fix $i\in I$, and take a total order on $I$ such that $i$ is the smallest. We can identify $I$ with $\{1,\dotsc, n\}$ via this total order, and $i$ corresponds to $1$. Then, we have
	\begin{align*}
		[c_1, Y_{(k)}]|_{u_1 = 0} &= \sum_{0\leq d < k} \Big[c_1, \sum_{1\leq l\leq n} \int_0^{u_l} \Big( {C'_l}_{(k-d)}Y_{(d)} + \frac{[{c_l}_{(k-d)} , Y_{(d)}]}{u_l}\Big)\Big|_{u_{l+1} = \cdots = u_n = 0} \,du_l \Big] \Big|_{u_1 = 0}\\
		&= \sum_{0\leq d < k}  \sum_{2\leq l\leq n} \Big[c_1, \int_0^{u_l} \Big( {C'_l}_{(k-d)}Y_{(d)} + \frac{[{c_l}_{(k-d)} , Y_{(d)}]}{u_l}\Big)\Big|_{u_1 = u_{l+1} = \cdots = u_n = 0} \,du_l \Big].
	\end{align*}
	On the other hand, for $l\geq 2$, we have $[c_1, C_l']|_{u_1 = 0} = 0$ by the conditions (2) and (3), and $[c_1, Y_{(d)}]|_{u_1 = 0} = 0$ by the induction hypothesis. Combining with $[c_1, c_l] = 0$, the right-hand side is zero. This completes the induction.
\end{itemize}
So far, we have proved (a) and (b). Since $Y$ is constructed by iterated integrals of various $C_i$, the convergene radius is at least the smallest of $C_i$'s. This proves (c) and completes the proof. \qed

\begin{lemma}\label{lem:cstcomm}
For $s = (\Gamma, r, \ell) \in \mathrm{PP}(I)$ and $t\in \mathrm{Node}(\Gamma)$, we define  $c^s_t\in \mathfrak{t}_{g, I}^f$ by
\begin{align*}
	c^s_t = \frac12 \sum_{\substack{i,j\in \mathrm{Leaf}(t)\\ i\neq j}} t_{\ell(i), \ell(j)}
\end{align*}
Then, we have $[c^s_t, c^s_{t'}] = 0$ for $t, t'\in  \mathrm{Node}(\Gamma)$.
\end{lemma}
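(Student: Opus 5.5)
Write $T_S := \sum_{\{i,j\}\subset S,\ i\neq j} t_{ij}$ for a subset $S\subset I$, the sum running over unordered pairs. Then $c^s_t = T_{\ell(\mathrm{Leaf}(t))}$. In a rooted tree, two subsets $\mathrm{Leaf}(t)$ and $\mathrm{Leaf}(t')$ are either nested or disjoint. So the lemma reduces to two facts in $\mathfrak{t}^f_{g,I}$, where only the Drinfeld--Kohno relations among the $t_{ij}$ with $i\neq j$ are used:
\begin{enumerate}[(i)]
	\item if $S\cap S' = \varnothing$, then $[T_S, T_{S'}] = 0$;
	\item if $S'\subset S$, then $[T_S, T_{S'}] = 0$.
\end{enumerate}

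Fact (i) is immediate. Each $t_{ij}$ with $\{i,j\}\subset S$ commutes with each $t_{kl}$ with $\{k,l\}\subset S'$, by the relation $[t_{ij},t_{kl}]=0$ for $\{i,j\}\cap\{k,l\}=\varnothing$.

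For fact (ii), the plan is to show that $T_S$ commutes with every $t_{kl}$ with $k\neq l$ and $\{k,l\}\subset S$. Summing over the pairs in $S'$ then gives the claim. Fix such $k,l$ and split $T_S$ into three parts:
\[
	T_S = t_{kl} + \sum_{m\in S\setminus\{k,l\}} (t_{km}+t_{lm}) + T_{S\setminus\{k,l\}}.
\]
The first part commutes with $t_{kl}$ trivially. The second commutes with $t_{kl}$ term by term in $m$, by the relation $[t_{kl}, t_{km}+t_{lm}]=0$. The third commutes with $t_{kl}$ by disjointness, exactly as in (i).

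I expect no real obstacle. The only point needing care is that the relations are stated for indices assumed distinct, and in the decomposition above they are indeed only applied to distinct indices. The central elements $t_{ii}$ never appear in $c^s_t$, so the framed relation plays no role.
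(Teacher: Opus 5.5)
Your proof is correct and is essentially the paper's argument. Both split into the disjoint and nested cases for the leaf sets, and both use only $[t_{ij},t_{kl}]=0$ for disjoint pairs together with $[t_{ij},t_{ik}+t_{jk}]=0$. The one cosmetic difference is in the nested case: you show $T_S$ commutes with each single generator $t_{kl}$, $\{k,l\}\subset S$, whereas the paper writes $c^s_{t'}$ as $c^s_t$ plus an outer sum plus the cross sums $\sum_{p\in\mathrm{Leaf}(t)}t_{\ell(p),\ell(k)}$, and then checks each cross sum against each $t_{\ell(i),\ell(j)}$.
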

\noindent Proof. Suppose neither $t\leq t'$ nor $t'\leq t$ holds. In this case, $\mathrm{Leaf}(t)$ and $\mathrm{Leaf}(t')$ are disjoint, so we have $[c^s_t, c^s_{t'}] = 0$ by the second relation in Definition \ref{def:frameddk}. Now suppose $t\leq t'$, in which case we have $\mathrm{Leaf}(t) \subset \mathrm{Leaf}(t')$ and 
\begin{align*}
	c^s_{t'} &= c_t^s + \frac12 \sum_{\substack{k,l\in \mathrm{Leaf}(t')\setminus  \mathrm{Leaf}(t)\\ i\neq j}} t_{\ell(k), \ell(l)} + \sum_{p\in \mathrm{Leaf}(t)} \sum_{k\in \mathrm{Leaf}(t') \setminus  \mathrm{Leaf}(t)} t_{\ell(p), \ell(k)}.
\end{align*}
Then, $c_t^s$ commutes with the second term by the same reason as above. For any $i\neq j\in \mathrm{Leaf}(t)$ and $k\in \mathrm{Leaf}(t') \setminus  \mathrm{Leaf}(t)$, we can see that $t_{\ell(i), \ell(j)}$ commutes with $ \sum_{p\in \mathrm{Leaf}(t)} t_{\ell(p), \ell(k)}$ using the last two relations in Definition \ref{def:frameddk}.\qed\\

Now we take $z$ to be a holomorphic local coordinate $z\colon C \supset U \to \mathbb{C}$ satisfying \eqref{eq:loccoord}. Let $(z_1,\lambda_1; \dotsc; z_n,\lambda_n)$ be the local coordinate on $\mathrm{Conf}_n^\mathrm{fr}(C)$ consisting of the copies of $(z,\lambda)$ associated with $z$ as in \eqref{eq:assoccoord}. Set
\begin{align*}
	\vec{\omega}_\mathrm{KZ}^{(n)} &= \sum_{1\leq i<j\leq n} \frac{t_{ij}}{2\pi\sqrt{-1}} d\log(z_i - z_j) + \sum_{1\leq i\leq n} \frac{t_{ii}}{4\pi\sqrt{-1}}d\log \lambda_i,\mbox{ and}\\
	 \zeta^{(n)} &= \vec{\alpha}_\mathrm{KZ}^{(n)} - \vec{\omega}_\mathrm{KZ}^{(n)}
\end{align*}
on $U^n$. The $1$-form $\zeta^{(n)}$ is holomorphic on $U^n$ due to the definition of $\vec\alpha_\mathrm{KZ}^{(n)}$ and the expansion \eqref{eq:bidiff}, while $\vec{\omega}_\mathrm{KZ}^{(n)}$ is the \textit{framed KZ connection} appeared in Section 2.2.4 of \cite{gonzalez}.

\begin{lemma}\label{lem:kzucoord}
Let $I = \{1,\dotsc, n\}$. For $s\in\mathrm{PP}(I)$, we have
\[
	 \vec{\alpha}_\mathrm{KZ}^{(n)} = \sum_{t\in\mathrm{Node}(\Gamma)} \frac{c_t^s}{2\pi\sqrt{-1}} d\log u_{\hat{\ell}(\mathsf{right}(t))}^s + \sum_{1\leq i\leq n} \frac{t_{ii}}{4\pi\sqrt{-1}}d\log \lambda_i + (\textrm{holomorphic in }(u_i^s))
\]
on $U_s$.
\end{lemma}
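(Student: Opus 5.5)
We reduce the statement to the framed KZ part. Since $\zeta^{(n)} = \vec{\alpha}_\mathrm{KZ}^{(n)} - \vec{\omega}_\mathrm{KZ}^{(n)}$ is holomorphic on $U^n$, and each $z_i$ is a polynomial in $(u^s_j)_{j\in I}$ by Lemma \ref{lem:zu}, the pull-back of $\zeta^{(n)}$ to the $(u^s_i;\lambda_i)$-coordinates is holomorphic in $(u^s_i)$. It therefore suffices to prove the claimed expansion with $\vec{\alpha}_\mathrm{KZ}^{(n)}$ replaced by $\vec{\omega}_\mathrm{KZ}^{(n)}$. The terms $\frac{t_{ii}}{4\pi\sqrt{-1}}d\log\lambda_i$ already match, so what remains is to rewrite $\sum_{i<j}\frac{t_{ij}}{2\pi\sqrt{-1}}d\log(z_i-z_j)$.

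For this we invoke Lemma \ref{lem:divis}. For each pair $i\neq j$ it gives
\[
	d\log(z_i-z_j) = d\log p^s_{ij} + \sum_{\substack{\tau\in\mathrm{Node}(\Gamma)\\ \ell^{-1}(i)\lor\ell^{-1}(j)\leq\tau}} d\log u^s_{\hat\ell(\mathsf{right}(\tau))}.
\]
Because $p^s_{ij}$ is a polynomial with $p^s_{ij}(0,\dotsc,0)=\pm1$, and it does not vanish on $V_s$ (hence on $U_s$ after shrinking), the term $d\log p^s_{ij}$ is holomorphic in $(u^s_i)$ near the relevant region. The sign ambiguity is irrelevant after taking $d\log$. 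So the singular part of $\sum_{i<j}\frac{t_{ij}}{2\pi\sqrt{-1}}d\log(z_i-z_j)$ is
\[
	\sum_{\tau\in\mathrm{Node}(\Gamma)} \frac{1}{2\pi\sqrt{-1}}\Big(\sum_{\substack{i<j\\ \ell^{-1}(i)\lor\ell^{-1}(j)\leq\tau}} t_{ij}\Big)\, d\log u^s_{\hat\ell(\mathsf{right}(\tau))}.
\]

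Next we swap the order of summation. The condition $\ell^{-1}(i)\lor\ell^{-1}(j)\leq\tau$ holds exactly when both leaves $\ell^{-1}(i),\ell^{-1}(j)$ lie in $\mathrm{Leaf}(\tau)$. The inner sum is therefore $\frac12\sum_{k\neq l\in\mathrm{Leaf}(\tau)}t_{\ell(k),\ell(l)} = c^s_\tau$, which gives the claimed formula. The root variable $u^s_{\hat\ell(e_r)}=z_{\hat\ell(e_r)}$ never appears with a $d\log$, consistent with the statement.

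The main point requiring care is the holomorphy claim "in $(u_i^s)$". It needs two things. First, $\zeta^{(n)}$ is genuinely holomorphic on all of $U^n$, including the diagonals; this follows from the local expansions \eqref{eq:bidiff}, from Lemma \ref{lem:phi} (1) for the $\varphi^i$ terms, and from the fact that the $\omega$'s and $\Psi_{a_1\dotsc a_s}$ for $s\geq1$ have no poles along $p_i=p_j$. Second, the $\log p^s_{ij}$ terms are holomorphic on a neighbourhood of $u^s=0$ (with $u^s_{\hat\ell(e_r)}$ arbitrary in $U$). Both are routine once stated; the combinatorial identification of the inner sum with $c^s_\tau$ is the only real bookkeeping.
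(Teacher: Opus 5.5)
Your proposal is correct and follows essentially the same route as the paper. Both arguments rewrite each $d\log(z_i-z_j)$ via Lemma~\ref{lem:divis}, discard the holomorphic $d\log p^s_{ij}$, regroup the sum over nodes to obtain $c^s_t$, and use Lemma~\ref{lem:zu} to see that $\zeta^{(n)}$ remains holomorphic in the $(u^s_i)$-coordinates; the only difference is that you reduce to $\vec{\omega}_\mathrm{KZ}^{(n)}$ first rather than last, which is immaterial.
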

\noindent Proof. Using Lemma \ref{lem:divis}, the first term of $\vec{\omega}_\mathrm{KZ}^{(n)}$ becomes
\begin{align*}
	&\sum_{1\leq i<j\leq n} \frac{t_{ij}}{2\pi\sqrt{-1}} d\log\Big(p_{ij}^s \cdot \prod_{\substack{t\in\mathrm{Node}(\Gamma)\\ \ell^{-1}(i)\lor \ell^{-1}(j)\leq t}} u^s_{\hat\ell(\mathsf{right}(t))}\Big)\\
	&= \sum_{1\leq i<j\leq n} \frac{t_{ij}}{2\pi\sqrt{-1}} \Big( d\log p_{ij}^s +  \sum_{\substack{t\in\mathrm{Node}(\Gamma)\\ \ell^{-1}(i)\lor \ell^{-1}(j)\leq t}} d\log u^s_{\hat\ell(\mathsf{right}(t))}\Big).
\end{align*}
Since we have $p_{ij}^s(0,\dotsc, 0) \neq 0$ by Lemma \ref{lem:divis}, $d\log p_{ij}^s$ is holomorphic at $(u^s_i) = (0,\dotsc, 0)$. For $t\in\mathrm{Node}(\Gamma)$, the condition $\ell^{-1}(i)\lor \ell^{-1}(j)\leq t$ is equivalent to $\ell^{-1}(i), \ell^{-1}(j)\in\mathrm{Leaf}(t)$, so we have
\begin{align*}
	\sum_{1\leq i<j\leq n} \frac{t_{ij}}{2\pi\sqrt{-1}} \sum_{\substack{t\in\mathrm{Node}(\Gamma)\\ \ell^{-1}(i)\lor \ell^{-1}(j)\leq t}} d\log u^s_{\hat\ell(\mathsf{right}(t))} &= \sum_{t\in\mathrm{Node}(\Gamma)} d\log u^s_{\hat\ell(\mathsf{right}(t))}  \sum_{\substack{1\leq i<j\leq n\\\ell^{-1}(i)\lor \ell^{-1}(j)\leq t}} \frac{t_{ij}}{2\pi\sqrt{-1}}\\
	&= \sum_{t\in\mathrm{Node}(\Gamma)} d\log u^s_{\hat\ell(\mathsf{right}(t))} \frac{c^s_t}{2\pi\sqrt{-1}}.
\end{align*}
By Lemma \ref{lem:zu}, $\zeta^{(n)}$ is also holomorphic in $(u_i^s)$-coordinate. This completes the proof. \qed

\begin{definition}
For $s\in\mathrm{PP}(I)$, we define $P_s \colon U_s \to \exp(\hat{\mathfrak{t}}_{g,I}^f)$ by
\[
	P_s = \prod_{t\in \mathrm{Node}(\Gamma)} (u_{\hat{\ell}(\mathsf{right}(t))}^s)^{\frac{c^s_t}{2\pi\sqrt{-1}}} \cdot \prod_{i\in I} \lambda_i^{\frac{t_{ii}}{4\pi\sqrt{-1}}}
\]
The order of the products does not matter since $c_t^s$ commutes with each other by Lemma \ref{lem:cstcomm} and $t_{ii}$ is central.
\end{definition}

\begin{example}
Take $s = (3(21))4$. We have
\begin{align*}
	P_s &= (u_4^s)^\frac{t_{12} + t_{13} + t_{14} + t_{23} + t_{24} + t_{34}}{2\pi\sqrt{-1}}(u_2^s)^\frac{t_{12} + t_{13} + t_{23}}{2\pi\sqrt{-1}} (u_1^s)^\frac{t_{12}}{2\pi\sqrt{-1}}\lambda_1^{\frac{t_{11}}{4\pi\sqrt{-1}}}\lambda_2^{\frac{t_{22}}{4\pi\sqrt{-1}}}\lambda_3^{\frac{t_{33}}{4\pi\sqrt{-1}}}\lambda_4^{\frac{t_{44}}{4\pi\sqrt{-1}}}\\
	&= (z_4- z_3)^\frac{t_{14} + t_{24} + t_{34}}{2\pi\sqrt{-1}} (z_2 - z_3)^\frac{t_{13} + t_{23}}{2\pi\sqrt{-1}} ( z_1 - z_2)^\frac{t_{12}}{2\pi\sqrt{-1}} \lambda_1^{\frac{t_{11}}{4\pi\sqrt{-1}}}\lambda_2^{\frac{t_{22}}{4\pi\sqrt{-1}}}\lambda_3^{\frac{t_{33}}{4\pi\sqrt{-1}}}\lambda_4^{\frac{t_{44}}{4\pi\sqrt{-1}}}.
\end{align*}
\end{example}

\begin{lemma}\label{lem:ratioanal}
Let $I = \{1,\dotsc, n\}$ and $s\in \mathrm{PP}(I)$.
\begin{enumerate}[(a)]
	\item There uniquely exists a solution
\[
	F_s\colon U_s \to \exp(\hat{\mathfrak{t}}_{g,n}^f)
\]
to $dF_s = \vec\alpha_\mathrm{KZ}^{(n)} F_s$ such that $F_s P_s^{-1}$ is holomorphically extended to a neighbourhood of $(u_i^s)_{i\in I} = (0,\dotsc, 0)$ with $(F_s P_s^{-1})(0,\dotsc, 0) = 1$.
\item There uniquely exists a solution
\[
	\hat F_s\colon U_s \to \exp(\hat{\mathfrak{t}}_{g,n+1}^f)
\]
to $d\hat F_s =(\vec\alpha_\mathrm{KZ}^{(n)})^{1,\dotsc, n-1, n(n+1)} \hat F_s$ such that $\hat F_s (P_s^{-1})^{1,\dotsc, n-1, n(n+1)}$ is holomorphically extended to a neighbourhood of $(u_i^s)_{i\in I} = (0,\dotsc, 0)$ with $(\hat F_s (P_s^{-1})^{1,\dotsc, n-1, n(n+1)})(0,\dotsc, 0) = 1$, which is $F_s^{1,\dotsc, n-1, n(n+1)}$.
\end{enumerate}
\end{lemma}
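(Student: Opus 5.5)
We reduce both parts to Lemma \ref{lem:cee}, working in the coordinates $(u^s_i;\lambda_i)_{i\in I}$ on $U_s$.

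\textbf{Setting up part (a).} Write $\vec\alpha_\mathrm{KZ}^{(n)}=\sum_i C_i\,d\log u^s_i+\sum_i\frac{t_{ii}}{4\pi\sqrt{-1}}d\log\lambda_i$. Here we use that the $\lambda$-dependence is exactly the central term, by Theorem \ref{thm:kzb}(5) and Lemma \ref{lem:phi}(1). By Lemma \ref{lem:kzucoord} we have
\[
C_i=\frac{c^s_{t}}{2\pi\sqrt{-1}}+u^s_i\,C_i',
\]
where $t$ is the node with $\hat\ell(\mathsf{right}(t))=i$, and $c_i=0$ when $i=\hat\ell(e_r)$. The remainder $C_i'$ is holomorphic near $0$. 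We take $A=U(\hat{\mathfrak t}^f_{g,n})$ with the degree grading, so that $c_i$ and $C_i$ have positive degree.

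Since $t_{ii}$ is central, the factor $\prod\lambda_i^{t_{ii}/4\pi\sqrt{-1}}$ splits off. It therefore suffices to solve the system in the $u$-variables and multiply by this factor afterwards.

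\textbf{Checking the hypotheses of Lemma \ref{lem:cee}.}
\begin{itemize}
\item[(1)] This is flatness, Theorem \ref{thm:kzb}(4), rewritten in $d\log$ form.
\item[(2)] This is Lemma \ref{lem:kzucoord}.
\item[(4)] This holds by holomorphy of $C_i'$.
\item[(3)] This is the main obstacle: we need $[C_i|_{u_j=0},c_j]=0$.
\end{itemize}

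\textbf{Verifying (3).} Setting $u_j=0$ collapses the subtree at the node $t_j$ to a single point; the configuration near $u_j=0$ corresponds to the points of $\mathrm{Leaf}(t_j)$ colliding. My plan is to extract from flatness the equation obtained by comparing the coefficient of $\frac{du_i\wedge du_j}{u_iu_j}$-type terms at $u_j=0$:
\[
u_j\partial_jC_i-u_i\partial_iC_j=[C_j,C_i].
\]
Restricting to $u_j=0$ gives $-u_i\partial_i(C_j|_{u_j=0})=[c_j,C_i|_{u_j=0}]$. It therefore suffices to show that $C_j|_{u_j=0}$ is independent of $u_i$, or more directly to check the commutation.

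For the direct check, note that the residue of $\vec\alpha$ along $u_j=0$ is $c_j$. The restricted form $C_i|_{u_j=0}$ is built from the $t_{kl}$, $x,y$ in which the colliding block appears only through the sums $\sum_{p\in\mathrm{Leaf}(t_j)}(\cdot)_p$. This is the operadic degeneration of Theorem \ref{thm:conv}(2), iterated as in Lemma \ref{lem:eta}: $C_i|_{u_j=0}$ lies in the image of the insertion map applied to the collapsed block. Such elements commute with $c^s_{t_j}$, by the relations $[x_k^a+x_l^a,t_{kl}]=0$ and $[t_{kl},t_{km}+t_{lm}]=0$, as in the proof of Lemma \ref{lem:cstcomm}.

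If this structural argument proves awkward, I would fall back on the flatness equation above together with a degree induction.

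\textbf{Conclusion of part (a).} Lemma \ref{lem:cee} now yields $F_s$. It is group-like because $\vec\alpha$ is Lie-valued and the normalisation $1$ is group-like; uniqueness also follows from Lemma \ref{lem:cee}.

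\textbf{Part (b).} The connection $(\vec\alpha_\mathrm{KZ}^{(n)})^{1,\dotsc,n-1,n(n+1)}$ has the same form, with $C_i,c_i$ replaced by their images under the Lie morphism $(-)^{1,\dotsc,n-1,n(n+1)}$. Hypotheses (1)--(4) are preserved under a Lie algebra homomorphism, so existence and uniqueness follow as in (a).

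Finally, $F_s^{1,\dotsc,n-1,n(n+1)}$ solves the transported equation. Moreover, $F_s^{1,\dotsc,n-1,n(n+1)}(P_s^{-1})^{1,\dotsc,n-1,n(n+1)}=(F_sP_s^{-1})^{1,\dotsc,n-1,n(n+1)}$ extends holomorphically with value $1$ at the origin. By uniqueness, it therefore equals $\hat F_s$.
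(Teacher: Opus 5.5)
Your proposal follows the paper's proof essentially step by step. You reduce to Lemma~\ref{lem:cee} in the $(u^s_i)$-coordinates after splitting off the central factor $\prod_i\lambda_i^{t_{ii}/4\pi\sqrt{-1}}$, verify hypothesis (3) by the operadic degeneration of Theorem~\ref{thm:conv}, and obtain (b) by transporting everything through $(-)^{1,\dotsc,n-1,n(n+1)}$, with $\hat F_s=F_s^{1,\dotsc,n-1,n(n+1)}$ by uniqueness; you state that last identification more explicitly than the paper does.

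One remark on (3): the flatness identity you wrote down already settles it with no operadic input. By (2), $C_j|_{u_j=0}=c_j$ is a constant, so $-u_i\partial_i(C_j|_{u_j=0})=0$ and hence $[c_j,C_i|_{u_j=0}]=0$; in other words, hypothesis (3) of Lemma~\ref{lem:cee} is implied by (1) and (2). This is preferable to the structural argument, which the paper also uses. Its claim that $C_i|_{u_j=0}$ lies in the image of the insertion map is not literally true when $i$ labels a node inside the collapsed block. There $C_i|_{u_j=0}$ also contains genus-$0$ terms $t_{kl}$, with $k,l\in\ell(\mathrm{Leaf}(t_j))$ and non-constant coefficients. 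These still commute with $c^s_{t_j}$, but for a different reason.
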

\noindent Proof. (a) We want to apply Lemma \ref{lem:cee}, so we take $A$ to be the completed universal enveloping algebra $\hat U(\mathfrak{t}_{g,n}^f)$, and define $C_i$ by
\[
	 \vec\alpha_\mathrm{KZ}^{(n)} = \sum_{1\leq i\leq n} \Big( C_i d\log u_i^s  + \frac{t_{ii}}{4\pi\sqrt{-1}} d\log \lambda_i \Big)
\] 
on $U_s$. We check the conditions (1)--(4). (1) follows from the flatness of $\vec\alpha_\mathrm{KZ}^{(n)}$. (2) follows from Lemma \ref{lem:kzucoord}, and we have
\[
	c_i = \begin{cases}
		\frac{c_t^s}{2\pi\sqrt{-1}} & \mbox{if } \hat{\ell}(\mathsf{right}(t)) = i\mbox{ for some } t\in\mathrm{Node}(\Gamma),\\
		0 & \mbox{otherwise}.
	\end{cases}
\]
Let us show (3): $[C_i|_{u_j = 0}, c_j] = 0$. Fix $j\in I$. If there are no $t\in\mathrm{Node}(\Gamma)$ with $ \hat{\ell}(\mathsf{right}(t)) = j$, we have $c_j = 0$ and the claim is trivial. Suppose $ \hat{\ell}(\mathsf{right}(t_j)) = j$ for a (unique) $t_j \in\mathrm{Node}(\Gamma)$. By Lemma \ref{lem:divis}, the substitution $u_j = 0$ translates, in terms of $(z_i)$-coordinates, to that all $z_k$ for $k\in\mathrm{Leaf}(t_j)$ are set to be equal. Let $I/\ell(\mathrm{Leaf}(t_j))$ be the quotient set obtained by collapsing $\ell(\mathrm{Leaf}(t_j))$ to a single point, and
\[
	m\colon \mathfrak{t}_{g,I/\ell(\mathrm{Leaf}(t_j))}^f \to \mathfrak{t}_{g,I}^f
\]
 be the operadic composition map associated to the quotient map $I \to I/\mathrm{Leaf}(t_j)$. By repeatedly applying Theorem \ref{thm:conv} to $\vec\alpha_\mathrm{KZ}^{(n)}$ to degenerate $z_k$'s to a single point, the coefficients in $C_i|_{u_j = 0}$ is contained in the image of $m$, so $C_i|_{u_j = 0}$ commutes with $c_j = c^s_{t_j}$. For (4), we can take $\delta$ as the convergence radius. Thus, we can take $F$ and $P$ satisfying (a)--(c) of Lemma \ref{lem:cee}. We set $F_s = F \prod_{i\in I} \lambda_i^{\frac{t_{ii}}{4\pi\sqrt{-1}}}$. Since we also have $P_s = P\prod_{i\in I} \lambda_i^{\frac{t_{ii}}{4\pi\sqrt{-1}}}$ in this case, $F_s$ is the solution to the framed KZB system with the properties we want.

Similarly, we can show (b) by applying $(-)^{1,\dotsc, n-1, n(n+1)}$ everywhere in the above proof. \qed

\begin{lemma}\label{lem:diaghol}
Let $I = \{1,\dotsc, n\}$ and $s\in\mathrm{PP}(I)$. We put $\tilde s = s^{1,\dotsc, n-1, n(n+1)}$ and define another function $G_{\tilde s}$ on $U_{\tilde s}$ by\vspace{-10pt}
\begin{align}\label{eq:Gs}
	G_{\tilde s} &= F_{\tilde s}\cdot J ,\quad \mbox{where } J = (z_{n+1} - z_{n})^{\frac{-t_{n,n+1}}{2\pi\sqrt{-1}}} \lambda_n^{\frac{t_{n,n+1}}{4\pi\sqrt{-1}}}\lambda_{n+1}^{\frac{t_{n,n+1}}{4\pi\sqrt{-1}}}.
\end{align}
Then, $G_{\tilde s}$ is holomorphic near the diagonal $\{z_n = z_{n+1}\}$ in $TU^{n+1}$ and satisfies $f_0^*(G_{\tilde s}) = (F_s)^{1,\dotsc, n-1, n(n+1)} $.
\end{lemma}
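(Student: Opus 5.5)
Since $\tilde s = s^{1,\dotsc,n-1,n(n+1)}$ has the leaves $n$ and $n+1$ as the two children of a single node $t_0$, the plan is to work in the $(u^{\tilde s}_i)$-coordinates. There $u^{\tilde s}_{n+1} = z_{n+1}-z_n$ up to sign, and $c^{\tilde s}_{t_0} = t_{n,n+1}$. All other coordinates $u^{\tilde s}_i$ with $i\neq n+1$ agree with $u^s_i$ after the substitution $z_{n+1}=z_n$. By Lemma \ref{lem:ratioanal} (a), $F_{\tilde s} = Y_{\tilde s} P_{\tilde s}$ with $Y_{\tilde s}$ holomorphic near $u=0$ and $Y_{\tilde s}(0)=1$. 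The factor of $P_{\tilde s}$ attached to $t_0$ is $(u^{\tilde s}_{n+1})^{t_{n,n+1}/2\pi\sqrt{-1}}$. This factor commutes with all other factors of $P_{\tilde s}$ by Lemma \ref{lem:cstcomm}, so it is cancelled by the first factor of $J$ (with branches fixed on $V_{\tilde s}$, where the relevant quantities are positive reals). Hence $G_{\tilde s} = Y_{\tilde s}\cdot P'\cdot \lambda_n^{t_{n,n+1}/4\pi\sqrt{-1}}\lambda_{n+1}^{t_{n,n+1}/4\pi\sqrt{-1}}$, where $P'$ is the remaining part of $P_{\tilde s}$. Neither $P'$ nor the $\lambda$-factors involve $u^{\tilde s}_{n+1}$. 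So $G_{\tilde s}$ is holomorphic in $u^{\tilde s}_{n+1}$ near $0$, that is, holomorphic near $\{z_n=z_{n+1}\}$; the multivaluedness in the other $u_i$ is unaffected.

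For the identity $f_0^*(G_{\tilde s}) = F_s^{1,\dotsc,n-1,n(n+1)}$, I will use the uniqueness in Lemma \ref{lem:ratioanal} (b). The first step is the differential equation. We have
\[
dG_{\tilde s} = \vec\alpha^{(n+1)}_\mathrm{KZ}G_{\tilde s} + G_{\tilde s}\,J^{-1}dJ .
\]
Since $t_{n,n+1}$ commutes with $c^{\tilde s}_t$ for all $t$, the right factor can be moved past $P'$. This lets us write $dG_{\tilde s} = \big(\vec\alpha^{(n+1)}_\mathrm{KZ} + \mathrm{Ad}_{Y_{\tilde s}}(J^{-1}dJ)\big)G_{\tilde s}$, or equivalently study the gauge-transformed form. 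We then pull back along $f_\varepsilon$ and let $\varepsilon\to0$. Here $f_\varepsilon^*(z_{n+1}-z_n) = \varepsilon\lambda_n + O(\varepsilon^2)$ and $f_\varepsilon^*\lambda_{n+1} = \lambda_n + O(\varepsilon)$, so
\[
f_\varepsilon^*(J^{-1}dJ) = \tfrac{t_{n,n+1}}{2\pi\sqrt{-1}}\big(-d\log\lambda_n + \tfrac12 d\log\lambda_n+\tfrac12 d\log\lambda_n\big)+O(\varepsilon) = O(\varepsilon).
\]
This is exactly the cancellation of the $d\lambda_n/\lambda_n$ pole seen in \eqref{eq:dlambda}. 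Combined with Theorem \ref{thm:conv} (2), $f_0^*G_{\tilde s}$ satisfies $d(f_0^*G_{\tilde s}) = (\vec\alpha^{(n)}_\mathrm{KZ})^{1,\dotsc,n-1,n(n+1)} f_0^*G_{\tilde s}$. Here $f_0^*G_{\tilde s}$ is defined because $G_{\tilde s}$ is holomorphic across the diagonal, and $f_0^*G_{\tilde s} = \lim_{\varepsilon \to 0} f_\varepsilon^*G_{\tilde s}$.

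For the normalisation, we have $f_0^*Y_{\tilde s}$ holomorphic with value $1$ at $u^s=0$. Next, $f_0^*$ of the $\lambda$-factors combines with $\lambda_n^{t_{n,n+1}/4\pi\sqrt{-1}}$ and with $\lambda_n^{(t_{nn}+t_{n+1,n+1})/4\pi\sqrt{-1}}$ into $\lambda_n^{(t_{nn})^{1,\dotsc,n-1,n(n+1)}/4\pi\sqrt{-1}}$, using $t_{nn}\mapsto t_{nn}+2t_{n,n+1}+t_{n+1,n+1}$. Finally, $f_0^*P'$ equals $(P_s)^{1,\dotsc,n-1,n(n+1)}$ except for the $\lambda$-part, because each $c^{\tilde s}_t$ with $t\neq t_0$ is $(c^s_t)^{1,\dotsc,n-1,n(n+1)}$. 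Thus $f_0^*G_{\tilde s}\,(P_s^{-1})^{1,\dotsc,n-1,n(n+1)}$ extends holomorphically with value $1$ at the origin. Uniqueness in Lemma \ref{lem:ratioanal} (b) then gives the claim.

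I expect the main obstacle to be the rigorous justification of the limit step. One has to make sure that $f_0^*$ commutes with solving the equation, i.e.\ that $\vec\alpha^{(n+1)}_\mathrm{KZ}+\mathrm{Ad}_{Y_{\tilde s}}(J^{-1}dJ)$ extends holomorphically across the diagonal and restricts there to the doubled connection. I would handle this by checking directly, in local coordinates, that the $t_{n,n+1}\,d\log(z_{n+1}-z_n)$ term of $\vec\alpha^{(n+1)}_\mathrm{KZ}$ is cancelled by $J$. This uses Lemma \ref{lem:kzucoord} together with $[t_{n,n+1},Y_{\tilde s}]|_{u_{n+1}=0}=0$, which is established inside the proof of Lemma \ref{lem:cee}. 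With that cancellation in hand, the restriction to $u_{n+1}=0$ is computed by Theorem \ref{thm:conv}.
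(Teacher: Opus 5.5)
Your strategy is the paper's. You cancel the singular factor of $P_{\tilde s}$ along $\{z_n=z_{n+1}\}=\{u^{\tilde s}_{n+1}=0\}$ with $J$. You then get the equation for $f_0^*G_{\tilde s}$ by pulling $dG_{\tilde s}=\vec\alpha^{(n+1)}_\mathrm{KZ}G_{\tilde s}+G_{\tilde s}J^{-1}dJ$ back along $f_\varepsilon$, using Theorem \ref{thm:conv} and the $d\log\lambda_n$ cancellation of \eqref{eq:dlambda}. Finally you conclude by the uniqueness in Lemma \ref{lem:ratioanal} (b).

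However, your bookkeeping of the principal parts contains two statements that are false as soon as $n\geq 2$. Let $t_0$ be the node of $\tilde s$ with leaves $n$ and $n+1$. For $n\geq 2$ it has a parent $t_b$, and by definition together with Lemma \ref{lem:zu},
\[
	u^{\tilde s}_{n+1}=\frac{z_{n+1}-z_n}{D},\qquad D=z_{\hat\ell(\mathsf{right}(t_b))}-z_{\hat\ell(\mathsf{left}(t_b))}=\prod_{\tau>t_0}u^{\tilde s}_{\hat\ell(\mathsf{right}(\tau))}.
\]
\begin{itemize}
\item First, $u^{\tilde s}_{n+1}$ is not $\pm(z_{n+1}-z_n)$. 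The first factor of $J$ therefore does not cancel $(u^{\tilde s}_{n+1})^{t_{n,n+1}/2\pi\sqrt{-1}}$; it leaves behind the factor $D^{-t_{n,n+1}/2\pi\sqrt{-1}}$.
\item Second, for every ancestor $\tau>t_0$ the leaves below $\tau$ carry both labels $n$ and $n+1$. Hence $c^{\tilde s}_\tau=(c^s_\tau)^{1,\dotsc,n-1,n(n+1)}+t_{n,n+1}$, because the image of $c^s_\tau$ contains the terms $t_{in}+t_{i,n+1}$ but never $t_{n,n+1}$. So your claim that $f_0^*P'$ equals $(P_s)^{1,\dotsc,n-1,n(n+1)}$ up to $\lambda$-factors is false.
\end{itemize}
Already for $s=(12)$ and $\tilde s=(1(23))$, the factor $P'$ carries the exponent $(t_{12}+t_{13}+t_{23})/2\pi\sqrt{-1}$ on $u^{\tilde s}_2=z_2-z_1$. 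The image of $P_s$ carries only $(t_{12}+t_{13})/2\pi\sqrt{-1}$ there.

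The two mistakes compensate exactly. All exponents commute by Lemma \ref{lem:cstcomm}, so the leftover factor $D^{-t_{n,n+1}/2\pi\sqrt{-1}}=\prod_{\tau>t_0}(u^{\tilde s}_{\hat\ell(\mathsf{right}(\tau))})^{-t_{n,n+1}/2\pi\sqrt{-1}}$ strips the surplus $t_{n,n+1}$ from each ancestor exponent. Together with the $\lambda$-factors of $J$, this gives
\[
	f_0^*(P_{\tilde s}J)=(P_s)^{1,\dotsc,n-1,n(n+1)}.
\]
This is the identity the normalisation actually needs; it is implicit in the paper's step $f_0^*(F_{\tilde s}P_{\tilde s}^{-1})=f_0^*(G_{\tilde s})(P_s^{-1})^{1,\dotsc,n-1,n(n+1)}$. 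Since $D$ does not involve $u^{\tilde s}_{n+1}$, your holomorphy argument survives. You should replace the two false identifications by this computation rather than rely on errors that happen to cancel.

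The obstacle you anticipate in your last paragraph is also lighter than you think. Because $G_{\tilde s}$ is holomorphic across the diagonal, so is $dG_{\tilde s}$. Lemma \ref{lem:holconv} then gives $d(f_0^*G_{\tilde s})=f_0^*(dG_{\tilde s})=\lim_{\varepsilon\to0}f_\varepsilon^*(dG_{\tilde s})$. You can then take the limits of $f_\varepsilon^*(\vec\alpha^{(n+1)}_\mathrm{KZ})\,f_\varepsilon^*(G_{\tilde s})$ and $f_\varepsilon^*(G_{\tilde s})\,f_\varepsilon^*(J^{-1}dJ)$ separately, as the paper does. No extension of $\vec\alpha^{(n+1)}_\mathrm{KZ}+\mathrm{Ad}_{Y_{\tilde s}}(J^{-1}dJ)$ across the diagonal is needed.
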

\noindent Proof. Since the labels $n$ and $n+1$ are placed in an innermost parenthesis in $\tilde s$, the diagonal $\{z_n = z_{n+1}\}$ is equal to the locus $\{u_{n+1} = 0\}$. Since $c^s_t = t_{n,n+1}$ with $\hat{\ell}(\mathsf{right}(t)) = n+1$, the singular part of $P_{\tilde s}$ along the diagonal $\{z_n = z_{n+1}\}$ is $(z_{n + 1} - z_n)^\frac{t_{n,n+1}}{2\pi\sqrt{-1}}$. The factor $J$ precisely contains the inverse of the singular part of $P_{\tilde s}$ along the diagonal $\{z_n = z_{n+1}\}$. By Lemma \ref{lem:ratioanal} (a), $G_{\tilde s}$ is holomorphic near $\{z_n = z_{n+1}\}$, and $f_0^*(G_{\tilde s})$ is well-defined on $U_s$. Then, applying Lemma \ref{lem:holconv} to $\omega = dG_{\tilde s}$ yields $\lim_{\varepsilon \to 0} f_\varepsilon^*(dG_{\tilde s}) = f_0^*(dG_{\tilde s})$.

By taking the exterior derivative of \eqref{eq:Gs}, we obtain the equation that $G_{\tilde s}$ satisfies:
\begin{align*}
	dG_{\tilde s} &= dF_{\tilde s}\cdot J + F_{\tilde s}\cdot dJ\\
	&= \vec\alpha_\mathrm{KZ}^{(n + 1)} F_{\tilde s}\cdot J + F_{\tilde s}\cdot J \cdot \frac{t_{n,n+1}}{4\pi\sqrt{-1}}\Big(\frac{-2 d(z_{n+1} - z_n)}{z_{n+1} - z_n} + \frac{d\lambda_n}{\lambda_n} + \frac{d\lambda_{n+1}}{\lambda_{n+1}}\Big)\\
	&= \vec\alpha_\mathrm{KZ}^{(n + 1)} G_{\tilde s} + G_{\tilde s}\cdot  \frac{t_{n,n+1}}{4\pi\sqrt{-1}}\Big(\frac{-2 d(z_{n+1} - z_n)}{z_{n+1} - z_n} + \frac{d\lambda_n}{\lambda_n} + \frac{d\lambda_{n+1}}{\lambda_{n+1}}\Big).
\end{align*}
Therefore,
\begin{align*}
	d(f_0^*(G_{\tilde s})) &= f_0^*(dG_{\tilde s})\\
	&= \lim_{\varepsilon \to 0} f_\varepsilon^*(dG_{\tilde s})\\
	&= \lim_{\varepsilon \to 0} f_\varepsilon^* \left( \vec\alpha_\mathrm{KZ}^{(n + 1)} G_{\tilde s} + G_{\tilde s}\cdot  \frac{t_{n,n+1}}{4\pi\sqrt{-1}}\Big(\frac{-2 d(z_{n+1} - z_n)}{z_{n+1} - z_n} + \frac{d\lambda_n}{\lambda_n} + \frac{d\lambda_{n+1}}{\lambda_{n+1}}\Big)\right)\\
	&= (\vec\alpha_\mathrm{KZ}^{(n)})^{1,\dotsc, n-1, n(n+1)} f_0^*(G_{\tilde s}) + 0.
\end{align*}
As for the last equality, the first term is due to Theorem \ref{thm:conv}, and the second term vanishes by a similar calculation to \eqref{eq:dlambda}. Therefore, $f_0^*(G_{\tilde s})$ and $(F_s)^{1,\dotsc, n-1, n(n+1)}$ satisfies the same equation $dH = (\vec\alpha_\mathrm{KZ}^{(n)})^{1,\dotsc, n-1, n(n+1)} H$ for $H \colon U_s \to \exp(\hat{\mathfrak{t}}_{g,n+1}^f)$. In addition, we have
\begin{align*}
	f_0^*(F_{\tilde s}P_{\tilde s}^{-1}) &= f_0^*(F_{\tilde s} J \cdot J^{-1}P_{\tilde s}^{-1})\\
	&= f_0^*(G_{\tilde s}) (P_s^{-1})^{1,\dotsc, n-1, n(n+1)}.
\end{align*}
Since the left-hand side is already holomorphic, so is the right-hand side. Moreover, evaluating the above at $(u_i^s)_{i\in I} = (0,\dotsc, 0)$, we have
\begin{align*}
	f_0^*(G_{\tilde s}) (P_s^{-1})^{1,\dotsc, n-1, n(n+1)}(0,\dotsc, 0) &= f_0^*(F_{\tilde s}P_{\tilde s}^{-1}(0,\dotsc, 0)) = 1.
\end{align*}
By Lemma \ref{lem:ratioanal} (b), we conclude that $f_0^*(G_{\tilde s}) = (F_s)^{1,\dotsc, n-1, n(n+1)} $.\qed

\begin{lemma}\label{lem:globalR}
Let $I = \{1,\dotsc, n\}$. There is a unique solution $R\colon U^n \to \exp(\hat{\mathfrak{t}}_{g,n}^f)$ to
	\[
		dR = \zeta^{(n)} R + [\vec{\omega}_\mathrm{KZ}^{(n)}, R]
	\]
	such that it is analytic near $(z_i)_{i\in I} = (0,\dotsc, 0)$ and $R(0,\dotsc, 0) = 1$. 
\end{lemma}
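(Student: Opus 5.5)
The plan is to view $R$ as the gauge transformation between the two flat connections $\vec\alpha_\mathrm{KZ}^{(n)}$ and $\vec\omega_\mathrm{KZ}^{(n)}$. Formally, if $dF=\vec\alpha_\mathrm{KZ}^{(n)}F$ and $dG=\vec\omega_\mathrm{KZ}^{(n)}G$, then $R=FG^{-1}$ satisfies
\[
dR=(\zeta^{(n)}+\vec\omega_\mathrm{KZ}^{(n)})R-R\,\vec\omega_\mathrm{KZ}^{(n)}=\zeta^{(n)}R+[\vec\omega_\mathrm{KZ}^{(n)},R].
\]
So the operator $D(R)=dR-\zeta^{(n)}R-[\vec\omega_\mathrm{KZ}^{(n)},R]$ is a flat connection on $U^n$ with values in $\hat U(\mathfrak{t}^f_{g,n})$. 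Its flatness follows from Theorem \ref{thm:kzb} (4) and from the flatness of the framed KZ connection $\vec\omega_\mathrm{KZ}^{(n)}$.

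The first reduction is to drop the framing variables. The terms $\frac{t_{ii}}{4\pi\sqrt{-1}}d\log\lambda_i$ are central, so they disappear from $[\vec\omega_\mathrm{KZ}^{(n)},R]$. By Lemma \ref{lem:phi} (1), the $d\log\lambda_i$-parts of $\vec\alpha_\mathrm{KZ}^{(n)}$ and $\vec\omega_\mathrm{KZ}^{(n)}$ cancel. Hence $\zeta^{(n)}$ is a holomorphic $1$-form in $(z_1,\dotsc,z_n)$ alone (by \eqref{eq:bidiff}), and we may look for $R$ as a function of $z$ only.

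I then construct $R$ as a power series $R=\sum_{m\ge0}R_m$, where $R_m$ is a homogeneous polynomial of degree $m$ in $z$ with coefficients in the completed algebra. The key tool is contraction with the Euler field $E=\sum_i z_i\partial_{z_i}$. Since $\iota_E\, d\log(z_i-z_j)=1$, we have $\iota_E\vec\omega_\mathrm{KZ}^{(n)}=\Omega:=\frac{1}{2\pi\sqrt{-1}}\sum_{i<j}t_{ij}$, a constant. Also $\iota_E\zeta^{(n)}$ is holomorphic and vanishes at the origin. Contracting the equation with $E$ gives, in degree $m$,
\[
mR_m-[\Omega,R_m]=\sum_{k\ge1}(\iota_E\zeta^{(n)})_kR_{m-k}.
\]
The operator $\ad\Omega$ raises the Lie degree ($\deg t_{ij}=2$), so it is nilpotent on each finite-dimensional truncation. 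Therefore $m-\ad\Omega$ is invertible for $m\ge1$. Starting from $R_0=1$, this determines all $R_m$ recursively and uniquely. Uniqueness in the lemma follows, because any analytic solution with $R(0)=1$ must satisfy these Euler-contracted equations.

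Two checks remain.

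\textbf{Convergence.} In each fixed Lie degree, $(m-\ad\Omega)^{-1}$ is a finite Neumann sum with norm $O(1/m)$. A standard majorant argument, comparing with the scalar equation $m r_m=\sum_k|\zeta|_k r_{m-k}$, then shows that each graded component of $R$ converges on a polydisc of radius comparable to that of $\zeta^{(n)}$.

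\textbf{The full system, not just its $E$-contraction.} This is the step I expect to be the main obstacle. Put $\Theta=D(R)$, a $1$-form which a priori has poles along the diagonals through $[\vec\omega_\mathrm{KZ}^{(n)},R]$. By construction $\iota_E\Theta=0$. Flatness of $D$ gives $D\Theta=0$, in the sense $d\Theta=\zeta^{(n)}\wedge\Theta+[\vec\omega_\mathrm{KZ}^{(n)},\Theta]$ up to signs. Cartan's formula then yields $L_E\Theta=\iota_E d\Theta=(\iota_E\zeta^{(n)})\Theta+[\Omega,\Theta]$, since the terms with $\iota_E\Theta$ vanish.

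Now I would expand $\Theta$ in $z$-homogeneous components. The pole terms $\frac{d(z_i-z_j)}{z_i-z_j}$ are homogeneous of degree $0$ as forms, so this expansion makes sense as a Laurent-type expansion by total degree. Degree by degree this gives $(m-\ad\Omega)\Theta_m=(\text{lower terms})$. Since the lowest component satisfies $\Theta_{m_0}=0$ whenever $m_0\neq0$, and the degree-$0$ part is killed by $R(0)=1$ together with $[\Omega,1]=0$, induction gives $\Theta=0$. The delicate part is making this Laurent bookkeeping rigorous near the diagonals; in particular, one must show that the $m=0$ component of $\Theta$, where $\ad\Omega$ is not invertible, actually vanishes. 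A fallback is to verify the vanishing of $\Theta$ away from the diagonals, on a small cone around a base point $U_s$, using $R=(F_sP_s^{-1})(G_sP_s^{-1})^{-1}$ built from Lemma \ref{lem:ratioanal} and its KZ analogue. That $R$ is a genuine solution there and agrees with the series by uniqueness of the Euler recursion, so the identity principle extends $\Theta=0$ to all of $U^n$.
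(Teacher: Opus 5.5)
Your proposal is correct, and it takes a genuinely different route from the paper.

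\textbf{The paper's route.} The paper inducts on the Lie degree $k$ in $\hat U(\mathfrak{t}^f_{g,n})$. Each $R_{(k)}$ is obtained by integrating, from the origin, a $1$-form built from the $R_{(d)}$ with $d<k$. The real work is propagating the invariant that $[\vec\omega_\mathrm{KZ}^{(n)},R_{(k)}]$ is regular, i.e.\ that $[t_{ij},R_{(k)}]$ vanishes on $\Delta_{ij}$. This is checked by splitting the path of integration into two pieces. On the piece inside the diagonal, the integrand commutes with $t_{12}$ by the operadicity of $\vec\alpha_\mathrm{KZ}^{(n)}$ (Theorem \ref{thm:conv}) and $[t_{12},t_{1i}+t_{2i}]=0$. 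The transversal piece is $O(z_1-z_2)$.

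\textbf{Your route.} You run a Frobenius-type recursion in the $z$-homogeneous degree. After contraction with $E$, the residue at the origin is the constant $\ad\Omega$, which is non-resonant because it is nilpotent on truncations. So the formal solution and its uniqueness are purely algebraic. You then recover the full system from $\nabla^2=0$ by the same invertibility of $m-\ad\Omega$.

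\textbf{What each buys.} Your argument uses only the flatness of $\vec\alpha_\mathrm{KZ}^{(n)}$ and $\vec\omega_\mathrm{KZ}^{(n)}$ and the holomorphy of $\zeta^{(n)}$; it never uses operadicity. Regularity of $[\vec\omega_\mathrm{KZ}^{(n)},R]$ comes out for free as $dR-\zeta^{(n)}R$, rather than being an invariant to track. This fits the fact that the commutation of the pull-back of $\zeta^{(n)}$ to $\Delta_{ij}$ with $t_{ij}$, which the paper takes from operadicity, is already forced by flatness: it is the vanishing of the residue of $\zeta^{(n)}\wedge\vec\omega_\mathrm{KZ}^{(n)}+\vec\omega_\mathrm{KZ}^{(n)}\wedge\zeta^{(n)}$ along $\Delta_{ij}$. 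Your version also makes explicit the role of flatness, which the paper uses tacitly when integrating the right-hand side at each step. The price is the majorant estimate. In the paper each graded piece is a finite iterated integral over $U^n$, so analyticity on all of $U^n$ is automatic.

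\textbf{The step you flag is routine.} Multiply $\Theta$ by $\prod_{i<j}(z_i-z_j)$, which is homogeneous of degree $N=n(n-1)/2$. This gives a holomorphic $1$-form whose homogeneous decomposition is the ordinary Taylor one. The identity $L_E\Theta=(\iota_E\zeta^{(n)})\Theta+[\Omega,\Theta]$ only acquires the shift $L_E\mapsto L_E-N$, so you still get $(m-\ad\Omega)\Theta_m=\sum_{k\geq1}(\iota_E\zeta^{(n)})_k\Theta_{m-k}$. Moreover $\Theta_0=-[\vec\omega_\mathrm{KZ}^{(n)},R_0]=0$, since $R_0=1$, $dR_0=0$, and $\zeta^{(n)}$ has only weights $\geq1$.

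\textbf{The fallback is unnecessary and would need more.} The function $(F_sP_s^{-1})(G_sP_s^{-1})^{-1}$ is only known to be analytic in the $u^s$-coordinates, not in $z$ at the origin. So uniqueness of your Euler recursion does not directly apply to it. In the paper the logic runs the other way: analyticity of $R$ in $z$ implies analyticity in $u^s$, and that is how $R$ is identified with $F_sH_s^{-1}$ on $U_s$.

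\textbf{Group-likeness.} Like the paper, you solve in $\hat U(\mathfrak{t}^f_{g,n})$. To land in $\exp(\hat{\mathfrak{t}}^f_{g,n})$, note that the coproduct of $R$ and $R\otimes R$ satisfy the same normalised equation. Hence they agree by your uniqueness argument, run in the completed tensor square.
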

\noindent Proof. The strategy is similar to Lemma \ref{lem:cee}. Write $R= \sum_{k\geq 0} R_{(k)}$ with $R_{(k)}$ degree $k$ in $U(\hat{\mathfrak{t}}_{g,n}^f)$. We show that:
\begin{enumerate}[(1)]
	\item there uniquely exists $R_{(k)}$ such that it is analytic, satisfies $dR_{(k)} = \sum_{0\leq d<k} \zeta^{(n)}_{(k-d)} R_{(d)} + [\vec{\omega}_\mathrm{KZ}^{(n)}{}_{(k-d)}, R_{(d)}]$ and $R(0,\dotsc, 0) = 1$; and
	\item $[\vec{\omega}_\mathrm{KZ}^{(n)}, R_{(k)}]$ is regular on $U^n$,
\end{enumerate}
by the induction on $k$. By the definition of $\vec{\omega}_\mathrm{KZ}^{(n)}$, (2) implies $R_{(k)} \in \Ker(\ad t_{ij}) + (z_i - z_j)R'_{(k)}$ with some regular $R'_{(k)}$, for any $i,j\in I$.
\begin{itemize}
	\item For $k = 0$, we have $R_{(0)} = 1$. In this case, (1) and (2) are clear.
	\item Suppose $k>0$. By the induction hypothesis (2) for $d<k$, we can take a unique analytic solution $R_{(k)}$ satisfying (1). We will show (2) for $k$. Take $i\neq j\in I$ and focus on the $i,j$-diagonal $\Delta_{i,j}^{(n)}\cap U^n$. By the $\mathrm{S}_n$-symmetry, we can assume $(i,j) = (1,2)$. We have
	\begin{align*}
		&[t_{12} d\log(z_1- z_2), R_{(k)}] = \sum_{0\leq k < d} d\log(z_1- z_2) \Big[t_{12},\\
		&\qquad \int_{(0,\dotsc, 0)}^{(z_1,z_1,z_3,\dotsc, z_n)} \Big( \zeta^{(n)}_{(k-d)} R_{(d)} + [\vec{\omega}_\mathrm{KZ}^{(n)}{}_{(k-d)}, R_{(d)}] \Big) + \int_{(z_1,z_1,z_3\dotsc, z_n)}^{(z_1,z_2, z_3,\dotsc, z_n)} \Big( \zeta^{(n)}_{(k-d)} R_{(d)} + [\vec{\omega}_\mathrm{KZ}^{(n)}{}_{(k-d)}, R_{(d)}] \Big) \Big].
	\end{align*}
	Denote the first- and the second integral by $I_1$ and $I_2$. For $I_1$, we integrate over a path from $(0,\dotsc, 0)$ to $(z_1,z_1,z_3\dotsc, z_n)$ within $\Delta_{1,2}^{(n)}\cap U^n$. Then, we have 
	\[
		[t_{12}, \zeta^{(n)}_{(k-d)}]|_{\Delta_{1,2}^{(n)}\cap U^n} = 0
	\]
	since $\vec{\alpha}_\mathrm{KZ}^{(n)}$ is operadic so that the restriction $ \zeta^{(n)}|_{\Delta_{1,2}^{(n)}\cap U^n}$ lies in $\Ker(\ad t_{12})$. We also have 
	\[
		[t_{12}, R_{(d)}]|_{\Delta_{1,2}^{(n)}\cap U^n} = [t_{12}, (z_1 - z_2)R'_{(d)}]|_{\Delta_{1,2}^{(n)}\cap U^n} = 0.
	\]
	Furthermore, we have 
	\begin{align*}
		[t_{12}, [\vec{\omega}_\mathrm{KZ}^{(n)}{}_{(k-d)}, R_{(d)}]]|_{\Delta_{1,2}^{(n)}\cap U^n} &= [[t_{12}, \vec{\omega}_\mathrm{KZ}^{(n)}{}_{(k-d)}], R_{(d)}] + [\vec{\omega}_\mathrm{KZ}^{(n)}{}_{(k-d)}, [t_{12},R_{(d)}]]\Big|_{\Delta_{1,2}^{(n)}\cap U^n}\\
		&= \sum_{i\geq 3} [[t_{12}, \big(t_{1i}d\log(z_1 - z_i) + t_{2i}d\log(z_2 - z_i)\big)_{(k-d)}], R_{(d)}]\Big|_{\Delta_{1,2}^{(n)}\cap U^n}\\
		&\qquad + [\vec{\omega}_\mathrm{KZ}^{(n)}{}_{(k-d)}, [t_{12},(z_1 - z_2)R'_{(d)}]]\Big|_{\Delta_{1,2}^{(n)}\cap U^n}\\
		&= \sum_{i\geq 3} [[t_{12}, (t_{1i} + t_{2i})d\log(z_1 - z_i)_{(k-d)}], R_{(d)}] \\
		&\qquad + [(z_1 - z_2)\vec{\omega}_\mathrm{KZ}^{(n)}{}_{(k-d)}, [t_{12},R'_{(d)}]]\Big|_{\Delta_{1,2}^{(n)}\cap U^n}.
	\end{align*}
	In the right-hand side, we have $[t_{12}, t_{1i} + t_{2i}] = 0$ and also $(z_1 - z_2)\vec{\omega}_\mathrm{KZ}^{(n)}|_{\Delta_{1,2}^{(n)}} = 0$. Therefore, we have $d\log(z_1- z_2) [t_{12},I_1]= 0$. On the other hand, $I_2$ is of order $O(z_1 - z_2)$ since it is the integral from $(z_1,z_1,z_3\dotsc, z_n)$ to $(z_1,z_2,z_3\dotsc, z_n)$. This shows $d\log(z_1- z_2) [t_{12}, I_2]$ is regular near $\Delta_{1,2}^{(n)}\cap U^n$.
\end{itemize}
This completes the induction and the proof.\qed\\

\noindent\textbf{Proof of Theorem \ref{thm:kzbassoc}.} Let $I = \{1,\dotsc, n\}$.

\noindent\textbf{Step 1:} We shall construct the morphism $Z\colon \mathbf{PaB}_\Sigma^f(n) \to \mathbb{G}\mathbf{PaCD}_g^f(n)$ of groupoid for $n\geq 0$. We take the solution
\[
	F_s\colon U_s \to \exp(\hat{\mathfrak{t}}_{g,n}^f)
\]
to the framed KZB system $dF_s = \vec\alpha_\mathrm{KZ}^{(n)} F_s$ as above. For $b \in \mathbf{PaB}_\Sigma^f(s, s')$, we define $Z(b) \in \mathbb{G}\mathbf{PaCD}_g^f$ by
\[
	F_{s'} Z(b) = F_s\ast b,
\]
where $F_s\ast b$ is the solution $F_s$ transported along a path given by $b$. Here, the flatness of the framed KZB connection implies that the resulting map $Z\colon \mathbf{PaB}_\Sigma^f(n) \to \mathbb{G}\mathbf{PaCD}_g^f(n)$ is a well-defined groupoid homomorphism for each $n$.

\noindent\textbf{Step 2:} We will show that this defines a map of operad modules. Let $b\in \mathbf{PaB}_\Sigma^f(s, s')$ be a braid with $n$ strands. We show that:
\begin{enumerate}[(1)]
	\item If the image of the braid $b$ on $\Sigma$ is contained in $U^n$, and therefore can be regarded as $b\in \mathbf{PaB}_\mathbb{C}^f(n)$ via the local coordinate $(z_1, \dotsc z_n)\colon U^n \to \mathbb{C}^n$, then $Z(b)$ agrees with the evaluation by the composition
	\[
		\mathbf{PaB}_\mathbb{C}^f(n) \xrightarrow{\Phi_\mathrm{KZ}} \mathbb{G}\mathbf{PaCD}^f(n) \xrightarrow{\iota} \mathbb{G}\mathbf{PaCD}^f_g(n)
	\]
	of the KZ associator $\Phi_\mathrm{KZ}$ and the natural map $\iota\colon \mathfrak{t}_n^f \to \mathfrak{t}_{g,n}^f$;
	\item $Z$ is compatible with the doubling of braids; and
	\item $Z$ is compatible with the deletion of strands.
\end{enumerate}
	For (1), let $H_s\colon U_s \to \exp(\hat{\mathfrak{t}}_n^f)$ be the unique solution to $dH_s = \vec{\omega}_\mathrm{KZ}^{(n)} H_s$ with the principal part $P_s$ for $s\in \mathrm{PP}(I)$, whose existence and uniqueness is proved similarly to the above $F_s$. Then, $R_s := F_sH_s^{-1}$ satisfies the equation
	\begin{align*}
		dR_s &= (dF_s) H_s^{-1}  - F_s H_s^{-1} dH_s H_s^{-1}\\
		&= \vec{\alpha}_\mathrm{KZ}^{(n)} F_sH_s^{-1} - F_s H_s^{-1}\vec{\omega}_\mathrm{KZ}^{(n)}\\
		&= \vec{\alpha}_\mathrm{KZ}^{(n)} R_s - R_s\vec{\omega}_\mathrm{KZ}^{(n)}\\
		&= \zeta^{(n)} R_s + [\vec{\omega}_\mathrm{KZ}^{(n)}, R_s]
	\end{align*}
	on $U_s$, and it is unique amongst such that it is analytic in the $(u_i^s)$-coordinate and $R_s(0,\dotsc, 0) = 1$. Indeed, if $R'_s$ is another such solution, $R'_sH_s$ satisfies the same equation as $F_s$ with the same principal part $P_s$, and hence equal to $F_s$ by Lemma \ref{lem:ratioanal} (a). On the other hand, by Lemma \ref{lem:globalR}, $R|_{U_s}$ is also such a solution since Lemma \ref{lem:zu} says that $u_i^s = 0$ for all $i\in I$ implies $z_i = 0$ for all $i\in I$. Therefore, we have $R|_{U_s} = R_s$ so that $R_s = R_{s'}$ in $U_s \cap U_{s'}$ for any $s, s'\in\mathrm{PP}(I)$, and therefore $R_s * b = R_{s'}$. This implies
	\begin{align*}
		Z(b) &= F_{s'}^{-1}\cdot (F_s * b)\\
		&= H_{s'}^{-1}R_{s'}^{-1}\cdot ((R_sH_s) * b)\\
		&= H_{s'}^{-1}R_{s'}^{-1}\cdot R_{s'}(H_s * b)\\
		&= H_{s'}^{-1}\cdot (H_s * b)\\
		&= \iota(\Phi_\mathrm{KZ}(b)),
	\end{align*}
	which shows (1).

	For (2), we proceed as follows. By the $\mathrm{S}_n$-symmetry, we can freely choose the label for the doubling, which we take it to be $n$. Therefore, our goal is to prove $Z(b^{1,\dotsc, n-1, n(n+1)} ) = Z(b)^{1,\dotsc,n-1,n(n+1)}$. We put
\[
	h = J^{-1} (J *b^{1,\dotsc,n-1,n(n+1)}) 
\]
on $U_{\tilde s}$. Then, we have
\begin{align*}
	Z(b^{1,\dotsc,n-1,  n(n+1)} ) &= F_{\tilde s'}^{-1} \cdot (F_{\tilde s}\ast b^{1,\dotsc,n-1,n(n+1)})\\
	&= J G_{\tilde s'}^{-1} \cdot (G_{\tilde s}\ast b^{1,\dotsc,n-1,n(n+1)})  (J^{-1} *b^{1,\dotsc,n-1,n(n+1)})\\
	&= J G_{\tilde s'}^{-1} \cdot (G_{\tilde s}\ast b^{1,\dotsc,n-1,n(n+1)}) J^{-1} h^{-1}\\
	&=: Q.
\end{align*}
Since $G_{\tilde s}$ is holomorphic near $\{z_n = z_{n+1}\}$ by Lemma \ref{lem:diaghol}, the analytic continuation by the path $b^{1,\dotsc,n-1,n(n+1)}$ in $TC^{n+1}$ can be done by taking the path $f_0(b)$, which is homotopic to $b^{1,\dotsc,n-1,n(n+1)}$:
\[
	G_{\tilde s}\ast b^{1,\dotsc,n-1,n(n+1)} = G_{\tilde s}\ast f_0(b).
\]
Recall that $f_0\colon TC^n\to TC^{n+1}$ in Definition \ref{def:fvarepsilon} is a holomorphic map. We have the identity 
\[
	f_0^*(F)*b = f_0^*(F * f_0(b))
\]
between the analytic continuation of any holomorphic function $F$ on an open set in $TC^{n+1}$ containing $f_0(b)$. Therefore, we have
\begin{align*}
	f_0^*(G_{\tilde s}\ast b^{1,\dotsc,n-1,n(n+1)}) &= f_0^*(G_{\tilde s}\ast f_0(b))\\
	&= f_0^*(G_{\tilde s})\ast b\\
	&= (F_s)^{1,\dotsc, n-1, n(n+1)} \ast b\\
	&= (F_s * b)^{1,\dotsc, n-1, n(n+1)}\\
	&= (F_{s'}Z(b))^{1,\dotsc, n-1, n(n+1)}\\
	&= f_0^*(G_{\tilde s'}) Z(b)^{1,\dotsc, n-1, n(n+1)}.
\end{align*}
by the definition of $Z$ and Lemma \ref{lem:diaghol}. Therefore, we obtain
\[	
	G_{\tilde s}\ast b^{1,\dotsc,n-1,n(n+1)} = G_{\tilde s'} Z(b)^{1,\dotsc, n-1, n(n+1)} + O(z_{n+1} - z_n)
\]
near the diagonal $\{z_n = z_{n+1}\}$ in $U_{\tilde s'}$. Since $J$ and $Z(b)^{1,\dotsc, n-1, n(n+1)}$ commute, we have
\begin{align*}
	Q &= J G_{\tilde s'}^{-1} \cdot (G_{\tilde s'} Z(b)^{1,\dotsc, n-1, n(n+1)} + O(z_{n+1} - z_n)) J^{-1} h^{-1}\\
	&= \Big( Z(b)^{1,\dotsc, n-1, n(n+1)} + J G_{\tilde s'}^{-1} O(z_{n+1} - z_n)J^{-1}  \Big)h^{-1}.
\end{align*}
Since $J$ is an infinite series of $t_{n, n+1}\log(z_{n+1} - z_n)$, each graded piece of $J G_{\tilde s'}^{-1} O(z_{n+1} - z_n)J^{-1} $ is of the form $(z_{n+1} - z_n)\cdot  p(\log(z_{n+1} - z_n) )$, where $p$ is some polynomial. Since $Q$ is a constant, we evaluate $Q$ at the limit $z_{n+1} - z_n \to +0$ to obtain
\[	
	Q = Z(b)^{1,\dotsc, n-1, n(n+1)} h^{-1}|_{z_{n+1} - z_n \to +0}.
\]

We compute the limit of $h$. Consider the universal cover $\mathcal{T}$ of $TC$ with $(z,\lambda)$ extended to a global coordinate. On a fixed lift of $U_{\tilde s}$ to $\mathcal{T}^n$, the function $h = J^{-1} (J *b^{1,\dotsc,n-1,n(n+1)})$ is realised as $J^{-1} \cdot \gamma^{(n)}\gamma^{(n+1)}(J) $ for some $\gamma \in \Aut(\mathcal{T}/C)$. In fact, $\gamma$ is the loop on $C$ trailing the $n$-th point of $b$. Since we have
\[
	\gamma^{(n)}\gamma^{(n+1)} \big( (z_{n+1} - z_n)^{-2}\lambda_n\lambda_{n+1} \big) = (z_{n+1} - z_n)^{-2}\lambda_n\lambda_{n+1} (1 + (z_{n+1} - z_n)\cdot \tilde h)
\]
for some holomorphic function $\tilde h$ on the fixed lift of $U_{\tilde s}$, described in terms of derivatives of $\gamma$, the projection to $TC^n$ yields
\[
	h = (1 + (z_{n+1} - z_n) \cdot \tilde h)^{\frac{t_{n,n+1}}{4\pi\sqrt{-1}}}
\]
on $U_{\tilde s}$. Therefore, we have $h|_{z_{n+1} - z_n \to +0} = 1$. In conclusion, we have 
\[
	Z(b^{1,\dotsc, n-1, n(n+1)} ) = Q = Z(b)^{1,\dotsc,n-1,n(n+1)},
\]
showing the compatibility with the doubling.

For (3), again by $\mathrm{S}_n$-symmetry, we only show $Z(b^{1,\dotsc, n-1, \varnothing} ) = Z(b)^{1,\dotsc,n-1,\varnothing}$, where the Lie algebra map $(-)^{1,\dotsc,n-1,\varnothing}\colon \mathfrak{t}_{g,n}^f \to \mathfrak{t}_{g,n-1}^f$ sends $x_n^a, y_n^a$ and $t_{in}$ to $0$ for $1\leq i\leq n$ and $1\leq a\leq g$. Firstly, let $p_n\colon \tilde C^n \to \tilde C^{n-1}$ the projection forgetting the $n$-th point. We have
\[
	(\vec{\alpha}_\mathrm{KZ}^{(n)})^{1,\dotsc,n-1,\varnothing} = p_n^*(\vec{\alpha}_\mathrm{KZ}^{(n-1)})
\]
from the definition of $\vec{\alpha}_\mathrm{KZ}^{(n)}$. Then, applying $(-)^{1,\dotsc,n-1,\varnothing}$ to $dF_s = \vec\alpha_\mathrm{KZ}^{(n)} F_s$ yields 
\[
	dF_s^{1,\dotsc,n-1,\varnothing} = p_n^*(\vec{\alpha}_\mathrm{KZ}^{(n-1)})
F_s^{1,\dotsc,n-1,\varnothing},
\]
so $p_n^*(F_{\bar s})$ and $F_s^{1,\dotsc,n-1,\varnothing}$ satisfy the same equation. In addition, Lemma \ref{lem:divis} implies that $p_n^*(u^{\bar s}_i)$ $(i\in I\setminus \{n\})$ and $P_s^{1,\dotsc,n-1,\varnothing}p_n^*(P_{\bar s})^{-1}$ are holomorphic near $(u^s_j)_{j\in I} = (0,\dotsc, 0)$, and we have
\[
	p_n^*(u^{\bar s}_i)(0,\dotsc, 0) = 0\quad \mbox{and} \quad P_s^{1,\dotsc,n-1,\varnothing}p_n^*(P_{\bar s})^{-1}(0,\dotsc, 0)  = 1.
\]
This shows
\begin{align*}
	p_n^*(F_{\bar s}) (P_s^{1,\dotsc,n-1,\varnothing})^{-1} &= p_n^*(F_{\bar s} P_{\bar s}^{-1})\cdot p_n^*(P_{\bar s}) (P_s^{1,\dotsc,n-1,\varnothing})^{-1}
\end{align*}
is again holomorphic near $(u^s_j)_{j\in I} = (0,\dotsc, 0)$ and evaluates to $1$ at $(u^s_j)_{j\in I} = (0,\dotsc, 0)$. All of these force $p_n^*(F_{\bar s}) = F_s^{1,\dotsc,n-1,\varnothing}$ on $U_s$ by a similar argument as Lemma \ref{lem:ratioanal}. Then, we have
\begin{align*}
	Z(b)^{1,\dotsc, n-1, \varnothing} &= (F_{s'}^{-1})^{1,\dotsc, n-1, \varnothing} \cdot (F_s^{1,\dotsc, n-1, \varnothing} * b)\\
	&= p_n^*(F_{\bar s}^{-1}) \cdot (p_n^*(F_{\bar s}) * b)\\
	&= p_n^*(F_{\bar s}^{-1}) \cdot (p_n^*(F_{\bar s} * p_n(b)))\\
	&= p_n^*(F_{\bar s}^{-1} \cdot (F_{\bar s} * p_n(b)))\\
	&= Z(b^{1,\dotsc, n-1, \varnothing})
\end{align*}
since $p_n(b)$ is equal to $b^{1,\dotsc, n-1, \varnothing}$ by the definition of the deletion of strands. This shows (3).

\noindent\textbf{Step 3:} We show that $Z$ induces an isomorphism
\[
	\hat Z\colon \widehat{\mathbf{PaB}}{}^f_\Sigma \to  \mathbb{G}\mathbf{PaCD}_g
\]
on the Malcev completion. Since $\widehat{\mathbf{PaB}}{}^f_\Sigma$ is a connected groupoid, we only have to check that it is an isomorphism at some object $s$. We identify the framed pure braid group $\mathrm{PB}^f_{g,n}$ with 
\[
	\mathbf{PaB}_\Sigma^f(s,s) = \pi_1(\mathrm{Conf}^\mathrm{fr}_n(\Sigma), s),
\]
so that the groupoid morphism $Z$ at the object $s$ reads $Z\colon \mathrm{PB}^f_{g,n} \to \exp(\hat{\mathfrak{t}}^f_{g,n})$.  By definition, the filtration on $\mathrm{PB}^f_{g,n}$ is induced by the augmentation ideal of the group algebra $\mathbb{C}[\mathrm{PB}^f_{g,n}]$, and on $\exp(\hat{\mathfrak{t}}^f_{g,n})$ by the lower central series of $\mathfrak{t}^f_{g,n}$. The map $Z$ is automatically compatible with these filtrations, so it induces $\hat Z\colon \widehat{\mathrm{PB}}^f_{g,n} \to \exp(\hat{\mathfrak{t}}^f_{g,n})$ on the completion. We will show that $\hat Z$ is an isomorphism.

Since the case of $g=0$ is the already-known KZ associator, we only deal with the case $g\geq 1$. Let
\[
	\mathrm{gr}\,\hat Z\colon \mathrm{gr}(\widehat{\mathrm{PB}}^f_{g,n}) \to \hat{\mathfrak{t}}^f_{g,n}
\]
be the associated graded map between complete Lie algebras. Then, $\hat Z$ is an isomorphism if and only if $\mathrm{gr}\,\hat Z$ is an isomorphism. By the result of Quillen (p.412 of \cite{quillen}), $\mathrm{gr}(\widehat{\mathrm{PB}}^f_{g,n})$ is isomorphic to $\mathrm{gr}(\mathrm{PB}^f_{g,n}) \otimes_\mathbb{Z} \mathbb{C}$ where $\mathrm{gr}(\mathrm{PB}^f_{g,n}) $ is the associated Lie algebra (over $\mathbb{Z}$) with respect to the lower central series, which is always generated by the degree-$1$ part.

We follow the proof of Theorem 4 in \cite{enriquez}. Firstly, we show that $\mathrm{gr}\,\hat Z$ is surjective. We take $X_a^i, Y_a^i\in \mathrm{PB}^f_{g,n}$ satisfying
\begin{align*}
	\mathrm{PB}^f_{g,n} &\to \pi_1(C^n, s) \cong \pi_g^n: X_a^i \mapsto A_a^{(i)}, Y_a^i \mapsto B_a^{(i)}.
\end{align*}
\begin{itemize}
	\item The case of $g\geq 2$. The Lie algebra $\mathfrak{t}^f_{g,n}$ is generated by $x_i^a$ and $y_i^a$ $(1\leq i\leq n, 1\leq a\leq g)$, and the filtration is the one given by the grading $\deg(x_i^a) = \deg(y_i^a) = 1$.
	We have
	\[
		Z(X_a^i) = \exp(y_i^a + (\textrm{deg}\geq 2))\quad\mbox{and}\quad Z(Y_a^i) = \exp\big(x_i^a + \sum_{1\leq b\leq g} \tau_{ab} y_i^b + (\textrm{deg}\geq 2)\big)
	\]
	by the defining formula of $\vec\alpha_\mathrm{KZ}^{(n)}$, where $\tau_{ab} = \int_{B_a} \omega_b$. Denoting by $[b]_1\in \mathrm{gr}(\widehat{\mathrm{PB}}^f_{g,n})$ the element specified by $b\in \mathrm{PB}^f_{g,n}$ in degree $1$, we obtain $\mathrm{gr}\,\hat Z([X_a^i]_1) = y_i^a$ and $\mathrm{gr}\,\hat Z([Y_a^i]_1) = x_i^a +  \sum_{1\leq b\leq g} \tau_{ab} y_i^b$, and therefore $\mathrm{gr}\,\hat Z$ is surjective.
	\item The case of $g=1$. The Lie algebra $\mathfrak{t}^f_{1,n}$ is generated by $x_i^1$, $y_i^1$ and $t_{ii}$ $(1\leq i\leq n)$, and the filtration is the one given by the grading $\deg(x_i^a) = \deg(y_i^a) = \deg(t_{ii}) = 1$. By taking $X_a^i$ and $Y_a^i$ to be rotation-free with respect to the standard framing on a flat torus, we have $\mathrm{gr}\,\hat Z([X_a^i]_1) = y_i^a$ and $\mathrm{gr}\,\hat Z([Y_a^i]_1) = x_i^a +  \sum_{1\leq b\leq g} \tau_{ab} y_i^b$. Let $F^{i}\in \mathrm{PB}^f_{g,n}$ be the element corresponding to the fibre class of $TC^{(i)}\setminus 0_{TC}^{(i)}$. We have $Z(F^{i}) = e^{t_{ii}/2}$ since the residue of $\vec\alpha_\mathrm{KZ}^{(n)}$ at $\lambda_i=0$ is given by $\frac{t_{ii}}{4\pi\sqrt{-1}}$. Therefore, we have $\mathrm{gr}\,\hat Z([F^i]_1) = t_{ii}/2$, and $\mathrm{gr}\,\hat Z$ is surjective.
\end{itemize}

Next, we show that $\mathrm{gr}\,\hat Z$ is injective. By repeating the calculation in the Lemma 5 of \cite{enriquez} using the presentation of $\mathrm{PB}^f_{g,n}$ given by Theorem 7 in \cite{bg}, we have a Lie algebra morphism
\[
	T\colon \mathfrak{t}_{g,n}^f \to \mathrm{gr}(\mathrm{PB}^f_{g,n})\otimes_\mathbb{Z} \mathbb{C}
\]
satisfying
\[
	T(x_i^a) = [X_a^i]_1,\quad T(y_i^a) = [Y_a^i]_1\quad\mbox{and if } g=1,\; T(t_{ii}) =-2[F^i]_1.
\]
Let $\theta \in\Aut(\mathfrak{t}_{g,n}^f)$ be such that
\[
	\theta(x_i^a) = y_i^a,\quad\theta(y_i^a) =  x_i^a + \sum_{1\leq b\leq g} \tau_{ab} y_i^b\quad\mbox{and}\quad \theta(t_{ij}) = -t_{ij}
\]
for $1\leq i,j\leq n$ and $1\leq a\leq g$. This is well-defined since it respects all relations in the presentation of $\mathfrak{t}_{g,n}^f$. In particular,
\[
	\sum_{1\leq a\leq g} [x_i^a, y_i^a] + \sum_{j\in I\setminus\{i\}} t_{ij} = (g-1) t_{ii}
\]
is respected due to $\tau_{ab} = \tau_{ba}$.

Since degree-$1$ elements $[X_a^i]_1$, $[Y_a^i]_1$ (and $[F^i]_1$ for $g=1$) generates $\mathrm{gr}(\mathrm{PB}^f_{g,n})\otimes_\mathbb{Z} \mathbb{C}$ as a Lie algbra, we have $T\circ\theta^{-1}\circ \mathrm{gr}\,\hat Z = \id$, and therefore $\mathrm{gr}\,\hat Z$ is injective. In conclusion, $\hat Z\colon \widehat{\mathrm{PB}}^f_{g,n} \to \exp(\hat{\mathfrak{t}}^f_{g,n})$ is an isomorphism, and so is $\hat Z\colon \widehat{\mathbf{PaB}}{}^f_\Sigma \to  \mathbb{G}\mathbf{PaCD}_g$.

All of this show that $(\Phi_\mathrm{KZ}, \hat Z)$ is a genus $g$ GD associator.\qed\\

\appendix
\section{Correction to the Proof of Enriquez' Lemma} \label{sec:correction}

In this section, we provide corrected proofs of Lemma 9 (b) in \cite{enriquez}}, whose statement is true but the original proof contains some inaccuracies. We revert back to the notation in \cite{enriquez}, and recall the statement.

\begin{lemma}[Lemma 9 (b) in \cite{enriquez}]\label{lem:lem9}
\[
	(\gamma_a^{(w)} - 1)\omega^{\underline{z} w}_{a_1\dotsc a_s bb} = \sum_{0\leq k\leq s} \frac{(-1)^{k+1}}{(k+1)!} \delta_{a a_s\dotsc a_{s-k+1}} \omega^{\underline{z} w}_{a_1\dotsc a_{s-k}a}.
\]
\end{lemma}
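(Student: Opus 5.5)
The plan is to prove the identity by a uniqueness argument in the variable $z$, with induction on $s$. Put
\[
	D_s := (\gamma_a^{(w)} - 1)\omega^{\underline{z} w}_{a_1\dotsc a_s bb} - \sum_{0\leq k\leq s} \frac{(-1)^{k+1}}{(k+1)!} \delta_{a a_s\dotsc a_{s-k+1}} \omega^{\underline{z} w}_{a_1\dotsc a_{s-k}a}.
\]
This is a meromorphic $1$-form in $z$ depending on the parameter $w$. I will show three things: $D_s$ is holomorphic in $z$, it is invariant under the deck transformations $\gamma_c^{(z)}$, and all its $\mathcal{A}_c$-periods in $z$ vanish. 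A holomorphic $1$-form on $C$ whose $A$-periods all vanish is zero, which gives $D_s = 0$. This is the same argument used for uniqueness in Lemma \ref{lem:phi}.

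\textbf{Poles.} By \eqref{eq:residue}, among the forms occurring in $D_s$ only the length-two form $\omega_{bb}$ has a pole, and it has residue $-\frac{1}{2\pi\sqrt{-1}}$ along $\tilde\Delta$. So for $s\geq 1$ every term of $D_s$ is holomorphic. For $s=0$ the left-hand side is $\omega_{bb}(z,\gamma_a^{-1}w) - \omega_{bb}(z,w)$. The residues along the lifts of the diagonal cancel, because $\tilde\Delta$ is $F_g$-stable and the residue is constant along it. The only term on the right-hand side is $-\omega_a$, which is holomorphic. Hence $D_s$ is holomorphic in all cases.

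\textbf{Monodromy in $z$.} Apply $\gamma_c^{(z)}$ to $D_s$. It commutes with $\gamma_a^{(w)}$, since the two act on different factors. Expanding both sides with the recursion \eqref{eq:residue} gives
\[
	\gamma_c^{(z)}D_s = D_s + \sum_{k\geq 1}\frac{1}{k!}\,\delta_{c a_1\dotsc a_k}\, D'_{s-k},
\]
where each $D'_{s-k}$ is the analogous difference for the shortened word $a_{k+1}\dotsc a_s$. Two edge cases need separate treatment. First, when the shortened word becomes $a\,b\,b$ or $b\,b$, the recursion changes because the trailing indices are consumed. Second, the $\delta$-factors on the right-hand side must be reindexed. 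For these I will check by hand that the Bernoulli-type coefficients $\frac{(-1)^{k+1}}{(k+1)!}$ match after the shift. By the induction hypothesis each $D'_{s-k}$ vanishes, so $D_s$ is $F_g$-invariant in $z$ and descends to $C$.

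\textbf{$A$-periods (main obstacle).} Fix $w\in\mathrm{Int}(\mathcal{D})$. By Lemma \ref{lem:integral}, $\int_{\mathcal{A}_c}\omega_{a_1\dotsc a_s bb}(z,w)\,dz = \delta_{ca_1\dotsc a_sbb}\mathbf{b}_{s+2}$, and the corresponding term is computed the same way on the right-hand side. The difficulty is the term $\omega(z,\gamma_a^{-1}w)$: the point $\gamma_a^{-1}w$ lies outside $\mathcal{D}$, so Lemma \ref{lem:integral} does not apply directly. I would handle it in one of two ways. The first is to move the point: use $\omega(z,\gamma_a^{-1}w)=(\gamma_a^{(z)}\gamma_a^{-1\,(w)})^{-1}$-type identities to pull the point back into $\mathcal{D}$ at the cost of applying $\gamma_a^{(z)}$, then use \eqref{eq:residue} and Lemma \ref{lem:integral}. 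The second is to deform $w$ across $\mathcal{A}_a$ and track the jump in the period; this jump is nonzero only through the residue of $\omega_{bb}$, so only when $c=a$ and the word is $bb$. Either way, the condition that the periods match reduces to a scalar identity among the $\mathbf{b}_k$ and the coefficients $\frac{(-1)^{k+1}}{(k+1)!}$. That identity should follow from the generating function $\sum\mathbf{b}_{s+1}t^s = \frac{t}{e^t-1}$ together with $\frac{e^{-t}-1}{t}\cdot\frac{-t}{e^{-t}-1}=1$, the same manipulation that appears in Lemma \ref{lem:int}. I expect the bookkeeping of the $\delta$-factors and the boundary jump in this step to be the real work.
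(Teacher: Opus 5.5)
Your architecture (show that $D_s$ is holomorphic, $F_g$-invariant in $z$, and has vanishing $\mathcal{A}_c$-periods) is the same as the paper's, which takes the first two properties from Enriquez's original argument. Your inductive proof of $z$-invariance via \eqref{eq:residue} is correct. The gap is in the period step, which is exactly where the appendix corrects Enriquez.

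In your second route you assert that the jump of $u\mapsto\int_{\mathcal{A}_c}\omega_{a_1\dotsc a_sbb}(z,u)$ between $u=w$ and $u=\gamma_a^{-1}w$ occurs only for the word $bb$. This is false. \eqref{eq:residue} prescribes residues only on the honest diagonal $z=u$, but the pole that crosses $\mathcal{A}_a$ is the translate $z=\gamma_a u$. Its residue must be computed from $(\gamma_a^{-1})^{(z)}\omega_{a_1\dotsc a_sbb}=\sum_k\frac{(-1)^k}{k!}\delta_{aa_1\dotsc a_k}\omega_{a_{k+1}\dotsc a_sbb}$, and it equals $-\frac{(-1)^s}{s!}\delta_{aa_1\dotsc a_s}\frac{1}{2\pi\sqrt{-1}}$, which is nonzero for every $s$. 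Hence the left-hand period is $\frac{(-1)^{s+1}}{s!}\delta_{ca_1\dotsc a_sa}$, not $0$ for $s\geq1$.

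The identity you cite is also not the relevant one. With $\sum_s\mathbf{b}_{s+1}t^s=\frac{t}{e^t-1}$, the right-hand periods are $\delta_{ca_1\dotsc a_sa}$ times the coefficient of $t^s$ in $\frac{e^{-t}-1}{t}\cdot\frac{t}{e^t-1}=-e^{-t}$, which is again $\frac{(-1)^{s+1}}{s!}$. Your version (jump only at $s=0$, product equal to $1$) would make both periods vanish for $s\geq1$. That is precisely the mistaken last step of the original proof.

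Once the residue at $z=\gamma_a u$ is computed correctly, your jump route does close. You also need the $\mathcal{A}_c$-periods of $\psi_{a_1\dotsc a_s}(\cdot,u)$ to vanish for all $u$, not just $u\in\mathcal{D}$. This follows by analytic continuation, since $\psi_{a_1\dotsc a_s}$ has no $z$-residues. With both fixes, the route is a valid alternative to the paper's Lemma~\ref{lem:integralout}.

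Two further corrections. First, your pole step is wrong as stated: for $s\geq1$ the terms of $D_s$ are not holomorphic. Each $\omega_{a_1\dotsc a_n}(z,w)$ has simple poles at all translates $z=\gamma w$. Already at $z=w$, the term $\omega_{a_1\dotsc a_sbb}(z,\gamma_a^{-1}w)$ has the residue computed above. The $k=s-1$ term on the right, which contains the length-two form $\omega_{a_1a}$, has residue $-\frac{(-1)^s}{s!}\delta_{aa_1\dotsc a_s}\frac{1}{2\pi\sqrt{-1}}$. These two cancel, and since you prove $z$-invariance independently, this single check at $z=w$ suffices.

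Second, your first route, if it means trading $\gamma_a^{(w)}$ for $\gamma_a^{(z)}$ via a diagonal invariance, fails. The forms $\omega$ are not invariant under the diagonal $F_g$-action: for instance $(\gamma_a^{(w)}-1)\omega_{bb}=-\omega_a$ while $(\gamma_a^{(z)}-1)\omega_{bb}=\delta_{ab}\omega_b$. The paper instead leaves the point at $\gamma_a^{-1}w$ and substitutes $\zeta=\gamma_a^{-1}z$. It then expands $(\gamma_a^{-1})^{(\zeta)}$ by the recursion and applies Lemma~\ref{lem:integral} on the translated domain $\gamma_a^{-1}\tilde D$, whose $A$-loops are $\gamma_a^{-1}(\mathcal{A}_c)$. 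It first specializes to $b=a$, which is allowed because the left side is independent of $b$. This specialization matters: for $b\neq a$ the normalization does not transfer to the translated domain, as already happens for $s=0$, $c=a$.
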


The original proof fails at the last step: the integral $\int_{z\in \mathcal{A}_c} (\mbox{LHS})$ and the sum $\sum_{0\leq k\leq s} \frac{(-1)^{k+1}}{(k+1)!}$ are not zero as claimed.

\subsection{Proof 1}
Our first proof is a direct correction to the original proof, showing they are indeed equal (and non-zero). Let $\tilde D$ be any fundamental domain of the $F_g$-action on $\tilde C$, and $\mathcal{A}_a = \mathcal{A}_a^{\tilde D} $ the $a$-th $A$-loop lying on the boundary of $\tilde D$. Recall that the action of $\gamma\in F_g$ on a function or form is by $\gamma \,f = f\circ \gamma^{-1}$. 

\begin{lemma}\label{lem:integralout}
For $w\in{\tilde D}$, we have
\begin{align*}
	\int_{\mathcal{A}_b^{\tilde D}} \gamma^{(w)}_a \omega^{\underline{z} w}_{a_1\dotsc a_s a a} = \delta_{a_1\dotsc a_sa b} \left(b_{s+2} - \frac{(-1)^s}{s!}\right).
\end{align*}
\end{lemma}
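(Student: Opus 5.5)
The plan is to trade the deck transformation acting on the second variable $w$ for one acting on the first variable $z$. The first-variable transformation rule \eqref{eq:residue} (in Enriquez' notation, $\gamma_a^{(z)}$ acting on $\omega^{\underline{z}w}_{\dots}$) is explicit, and after the trade the second point stays in $\tilde D$, so Lemma \ref{lem:integral} can evaluate every resulting $A$-period. I will do this in three steps.

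\textbf{Step 1: pass from $\gamma_a^{(w)}$ to $(\gamma_a^{(z)})^{-1}$.} I would show that each $\omega^{\underline{z}w}_{a_1\dotsc a_s}$ is invariant under the simultaneous deck action,
\[
	\gamma^{(z)}\gamma^{(w)}\,\omega^{\underline{z}w}_{a_1\dotsc a_s} = \omega^{\underline{z}w}_{a_1\dotsc a_s}, \qquad \gamma \in F_g .
\]
For $s=1$ this holds because $\omega_a$ is pulled back from $C$. For general $s$, I would argue by induction on $s$, using the uniqueness characterisation of the $\omega$'s: prescribed first-variable monodromy \eqref{eq:residue}, prescribed residues, and the normalisation of Lemma \ref{lem:integral}. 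The diagonally translated form satisfies the same three conditions, because diagonal translation preserves the diagonal residues and the monodromy in $z$ commutes with it. This is, I expect, the content of Lemma 9 (a) in \cite{enriquez}, and I would cite it if its statement is of this form.

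Granting the invariance, $\gamma_a^{(w)}\omega = (\gamma_a^{(z)})^{-1}\omega$. I would then invert the recursion \eqref{eq:residue}. Write $\gamma_a^{(z)}$ as the operator sending $\omega_{a_1\dotsc a_m}$ to $\sum_{k} \frac{1}{k!}\delta_{aa_1\dotsc a_k}\omega_{a_{k+1}\dotsc a_m}$, which is formally ``$e^{D}$'' with $D$ deleting a leading index equal to $a$. Its inverse is then
\[
	(\gamma_a^{(z)})^{-1}\omega_{a_1\dotsc a_m} = \sum_{0\leq k<m}\frac{(-1)^k}{k!}\,\delta_{aa_1\dotsc a_k}\,\omega_{a_{k+1}\dotsc a_m} .
\]
I would verify this coefficient identity directly: composing the two sums gives, for each total number $N$ of deleted indices, the coefficient $\sum_{j+k=N}\frac{(-1)^k}{j!\,k!}=\delta_{N0}$.

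\textbf{Step 2: integrate term by term over $\mathcal{A}_b^{\tilde D}$.} Apply the inverse formula with $(a_1,\dotsc,a_m)=(a_1,\dotsc,a_s,a,a)$. For $w\in\tilde D$, Lemma \ref{lem:integral} (transported to the fundamental domain $\tilde D$) gives
\[
	\int_{\mathcal{A}_b}\omega_{a_{k+1}\dotsc a_s aa} = \delta_{a_{k+1}\dotsc a_s a b}\,\mathbf{b}_{s+2-k}
\]
for $k\le s$, and the analogous value for $k=s+1$, where only $\omega_a$ remains and its period is $\delta_{ab}\mathbf{b}_1$. Combined with the prefactor $\delta_{aa_1\dotsc a_k}$, every term carries the common factor $\delta_{a_1\dotsc a_s a b}$, which matches the right-hand side. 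Off that locus everything vanishes. On it, the integral equals
\[
	\sum_{0\leq k\leq s+1}\frac{(-1)^k}{k!}\,\mathbf{b}_{s+2-k} .
\]

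\textbf{Step 3: the Bernoulli identity.} It remains to show $\sum_{0\le k\le s+1}\frac{(-1)^k}{k!}\mathbf{b}_{s+2-k}=\mathbf{b}_{s+2}-\frac{(-1)^s}{s!}$. By definition $\sum_{r\ge0}\mathbf{b}_{r+1}t^r = \frac{t}{e^t-1}$. The left-hand side is the coefficient of $t^{s+1}$ in
\[
	e^{-t}\,\frac{t}{e^t-1} = \frac{t}{e^t-1} - t\,e^{-t},
\]
where the equality holds because $\frac{1-e^{-t}}{e^t-1}=e^{-t}$. The coefficient of $t^{s+1}$ in $\frac{t}{e^t-1}-te^{-t}$ is $\mathbf{b}_{s+2}-\frac{(-1)^s}{s!}$, as required.

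I expect the main obstacle to be Step 1. The diagonal invariance must be justified carefully: it needs a uniqueness statement for the $\omega$'s, and one has to check that the residue along the diagonal is preserved under the diagonal translation. The other delicate point is the bookkeeping of the boundary term $k=s+1$ and of how the $\delta$'s combine. If the diagonal invariance turns out to be unavailable, I would fall back on a contour argument. By the $A$-normalisation of $\psi_{a_1\dotsc a_s}$, which follows from Lemma \ref{lem:integral}, the function $w'\mapsto\int_{\mathcal{A}_b}\omega(z,w')$ is locally constant away from the translates of $\mathcal{A}_b$. One would then track its jumps along a path from $w$ to $\gamma_a^{-1}w$, using the residues in \eqref{eq:residue}.
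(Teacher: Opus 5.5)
Your Step 1 is false for $g\geq 2$, and Step 2 rests on it. The forms $\omega^{\underline{z}w}_{a_1\dotsc a_s}$ are not invariant under the diagonal $F_g$-action. Lemma 9 of \cite{enriquez} cannot supply this: part (b), recalled as Lemma \ref{lem:lem9}, computes the $w$-monodromy and contradicts it.

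Concretely, let $c\neq a$. Since $d_w\omega^{\underline{z}w}_{cc}=-\psi$, the difference $(\gamma_a^{(w)}-1)\omega^{\underline{z}w}_{cc}$ is minus the integral of $\psi(z,\cdot)$ along a path from $w$ to $\gamma_a^{-1}(w)$. That is a $B_a$-period of the normalised bidifferential, hence equal to $\pm\omega_a\neq 0$ (Lemma \ref{lem:lem9} with $s=0$ gives $-\omega_a$). By \eqref{eq:residue}, $\gamma_a^{(z)}$ fixes both $\omega_{cc}$ and $\omega_a$, so $\gamma_a^{(z)}\gamma_a^{(w)}\omega_{cc}=\omega_{cc}-\omega_a\neq\omega_{cc}$. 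Restricting to the words of the statement does not help: Lemma \ref{lem:lem9} gives $\gamma_a^{(z)}\gamma_a^{(w)}\omega_{caa}=\omega_{caa}-\omega_{ca}$, and $\omega_{ca}\neq 0$ because $(\gamma_c^{(z)}-1)\omega_{ca}=\omega_a$.

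Your uniqueness argument breaks in two places.
\begin{itemize}
\item First, $\gamma_c^{(z)}\gamma^{(z)}\gamma^{(w)}=\gamma^{(z)}\gamma^{(w)}(\gamma^{-1}\gamma_c\gamma)^{(z)}$. So the translated family transforms under $\gamma_c$ by the rule for the conjugate $\gamma^{-1}\gamma_c\gamma$, which is not the prescribed rule since $F_g$ is non-abelian.
\item Second, and decisively, the normalisation of the translated form at $w\in\tilde D$ is a period over $\gamma^{-1}(\mathcal{A}_b)$ with second point in $\gamma^{-1}(\tilde D)$. Lemma \ref{lem:integral} says nothing about such periods. This is exactly what fails for $\omega_{cc}$, where monodromy and residues do match.
\end{itemize}
Your Step 2 still lands on the right sum, because for $J=a_1\dotsc a_saa$ the $\mathcal{A}_b$-periods of $\gamma_a^{(w)}\omega_J$ and $(\gamma_a^{(z)})^{-1}\omega_J$ do agree. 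But that agreement is essentially the statement itself, so it cannot be assumed.

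The paper asserts no invariance. It substitutes $\zeta=\gamma_a^{-1}(z)$, which legitimately rewrites the integrand as $(\gamma_a^{-1})^{(\zeta)}\omega^{\underline{\zeta}\,\gamma_a^{-1}(w)}_{a_1\dotsc a_saa}$ on the contour $\gamma_a^{-1}(\mathcal{A}_b^{\tilde D})$. It then expands with the same inverse recursion you wrote, evaluates each period by Lemma \ref{lem:integral} on the translated domain $\gamma_a^{-1}(\tilde D)\ni\gamma_a^{-1}(w)$, and finishes with your Step 3 identity. By the $\omega_{cc}$ example, Lemma \ref{lem:integral} on a translated domain fails for general words. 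It does hold for the words ending in $aa$ that occur there, and your fallback is what shows this.

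Your fallback is in fact a complete proof once one point is added. Lemma \ref{lem:integral} gives $\int_{\mathcal{A}_b}\psi_{a_1\dotsc a_s}(\cdot,w')=0$ only for $w'\in\tilde D$. However, $\omega_J$ has constant residues along $z\in F_g w'$, so $\psi_{a_1\dotsc a_s}=-d_w\omega_{a_1\dotsc a_saa}$ has zero $z$-residues. Hence this period continues across the translates of $\mathcal{A}_b$ without jumps and vanishes identically. It follows that $w'\mapsto\int_{\mathcal{A}_b}\omega_{a_1\dotsc a_saa}(\cdot,w')$ is constant on chambers.

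Passing from $\tilde D$ to $\gamma_a^{-1}(\tilde D)$ crosses only $\gamma_a^{-1}(\mathcal{A}_a)$. There the pole $z=\gamma_a(w')$ crosses $\mathcal{A}_b$ exactly when $b=a$. Orienting $\mathcal{A}_a$ with $\tilde D$ on its left, the jump is $2\pi\sqrt{-1}\,\mathrm{res}_{z=w'}(\gamma_a^{-1})^{(z)}\omega_{a_1\dotsc a_saa}=-\frac{(-1)^s}{s!}\delta_{aa_1\dotsc a_s}$ by \eqref{eq:residue}. Adding this to $\delta_{a_1\dotsc a_saab}\mathbf{b}_{s+2}$ gives the statement directly, with no Bernoulli identity needed.
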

\noindent Proof. Putting $\zeta = \gamma_a^{-1}(z)$, we have
\begin{align*}
	\int_{\mathcal{A}_b^{\tilde D}} \gamma^{(w)}_a \omega^{\underline{z} w}_{a_1\dotsc a_s a a} &= \int_{z\in \mathcal{A}_b^{\tilde D}}  \omega^{\underline{z} \gamma_a^{-1}(w)}_{a_1\dotsc a_s a a}\\
	&= \int_{\zeta\in \gamma_a^{-1}(\mathcal{A}_b^{\tilde D})}  \omega^{\underline{\gamma_a(\zeta)} \gamma_a^{-1}(w)}_{a_1\dotsc a_s a a}\\
	&= \int_{\zeta\in \gamma_a^{-1}(\mathcal{A}_b^{\tilde D})} (\gamma_a^{-1})^{(\zeta)}\omega^{\underline{\zeta} \gamma_a^{-1}(w)}_{a_1\dotsc a_s a a}\\
	&=  \sum_{0\leq k < s+2} \frac{(-1)^k}{k!} \delta_{a a_1\dotsc a_k} \int_{\zeta\in \gamma_a^{-1}(\mathcal{A}_b^{\tilde D})} \omega^{\underline{\zeta} \gamma_a^{-1}(w)}_{a_{k+1}\dotsc a_{s+2}}.
\end{align*}
Here, we put $a_{s+1} = a_{s+2} = a$.

Now put ${\tilde D}' = \gamma_a^{-1}({\tilde D})$. We have $\mathcal{A}_b^{{\tilde D}'} = \gamma_a^{-1}(\mathcal{A}_b^{{\tilde D}})$. We also have $\gamma_a^{-1}(w) \in {\tilde D}'$, so we apply Lemma \ref{lem:integral} to obtain
\begin{align*}
	\int_{\zeta\in \gamma_a^{-1}(\mathcal{A}_b^{\tilde D})} \omega^{\underline{\zeta} \gamma_a^{-1}(w)}_{a_{k+1}\dotsc a_{s+2}} &= \int_{\zeta\in \mathcal{A}_b^{{\tilde D}'}} \omega^{\underline{\zeta} \gamma_a^{-1}(w)}_{a_{k+1}\dotsc a_{s+2}}\\
	 &= \delta_{a_{k+1}\dotsc a_{s+2} b}  b_{s+2-k}.
\end{align*}
This implies, using $a_{s+2} = a$,
\begin{align*}
	\int_{\mathcal{A}_b^{\tilde D}} \gamma^{(w)}_a \omega^{\underline{z} w}_{a_1\dotsc a_s a a} &= \sum_{0\leq k < s+2} \frac{(-1)^k}{k!} \delta_{a a_1\dotsc a_k} \delta_{a_{k+1}\dotsc a_{s+2} b} b_{s+2-k} \\
	&= \delta_{a a_1\dotsc a_{s+2} b}  \sum_{0\leq k < s+2} \frac{(-1)^k}{k!} b_{s+2-k}\\
	&= \delta_{a a_1\dotsc a_{s+2} b}  \sum_{1\leq k \leq s+2} \frac{(-1)^{k-1}}{(k-1)!} b_{s+3-k}.
\end{align*}
Since $\sum_{p\geq 1}\frac{(-1)^{p-1}}{(p-1)!} t^p = t e^{-t}$, the last sum is the coefficient of $t^{s+2}$ in $te^{-t}\cdot \frac{t}{e^t - 1}$, or, the coefficient of $t^{s+1}$ in
\begin{align*}
	e^{-t}\cdot \frac{t}{e^t - 1} &= (e^{-t} - 1) \frac{t}{e^t - 1} + \frac{t}{e^t - 1}\\
	&= -t e^{-t} + \frac{t}{e^t - 1},
\end{align*} 
which is $- \frac{(-1)^s}{s!} + b_{s+2}$.\qed\\

\noindent\textbf{Proof of Lemma \ref{lem:lem9}.} We put
\[
	\delta^{\underline{z}}_{a_1\dotsc a_s} = (\gamma_a^{(w)} - 1)\omega^{\underline{z} w}_{a_1\dotsc a_s bb} - \sum_{0\leq k\leq s} \frac{(-1)^{k+1}}{(k+1)!} \delta_{a a_s\dotsc a_{s-k+1}} \omega^{\underline{z} w}_{a_1\dotsc a_{s-k}a}.
\]
It is checked as in the original proof that $\delta^{\underline{z}}_{a_1\dotsc a_s}$ is a holomorphic 1-form on $C$. In particular, it does not depend on $b$ so we take $b= a$. It remains to show that the integral is zero for any $A$-loop. We take $w\in{\tilde D}$.

Using Lemma \ref{lem:integralout}, we have
\begin{align*}
	\int_{\mathcal{A}_c^{\tilde D}}\delta^{\underline{z}}_{a_1\dotsc a_s} &= \int_{\mathcal{A}_c^{\tilde D}}(\gamma_a^{(w)} - 1)\omega^{\underline{z} w}_{a_1\dotsc a_s aa} - \sum_{0\leq k\leq s} \frac{(-1)^{k+1}}{(k+1)!} \delta_{a a_s\dotsc a_{s-k+1}} \int_{\mathcal{A}_a^{\tilde D}} \omega^{\underline{z} w}_{a_1\dotsc a_{s-k}a}\\
	&= \delta_{a_1\dotsc a_sa c} \left(b_{s+2} - \frac{(-1)^s}{s!}\right) - \delta_{a_1\dotsc a_sa c} b_{s+2}\\
	&\qquad - \sum_{0\leq k\leq s} \frac{(-1)^{k+1}}{(k+1)!} \delta_{a a_s\dotsc a_{s-k+1}} \delta_{a_1\dotsc a_{s-k} a c} b_{s-k+1}\\
 	&=  \delta_{a_1\dotsc a_sa c} \left( - \frac{(-1)^s}{s!} - \frac{(-1)^{s+1}}{s!}\right)\\
	&= 0.
\end{align*}
Indeed, since $\sum_{p\geq 0} \frac{(-1)^{p+1}}{(p+1)!}  t^p = \frac{e^{-t} - 1}{t}$, the last sum is the coefficient of $t^s$ in 
\begin{align*}
	 \frac{e^{-t} - 1}{t}\frac{t}{e^t - 1} = -e^{-t},
\end{align*}
which is $\frac{(-1)^{s+1}}{s!}$. This completes the proof.\qed

\subsection{Proof 2}

Next, we provide a proof based on generating series. Let $[g] = \{1,\dotsc, g\}$ and $\mathbb L(x_1,\dotsc , y_g)$ be the topologically free Lie algebra generated by $x_a,y_a$, $a\in [g]$. Let $t=\sum_a [x_a,y_a]$. 

\begin{definition}
Define a $\mathbb L(x_1,\dotsc ,y_g)$-valued meromorphic section of the sheaf $\Omega^1_{\tilde C}\boxtimes\mathcal O_{\tilde C}$ on $\tilde C\times\tilde C$ by   
\[
	K^{\underline zw} =\sum_{s\geq0} \sum_{a_1,\ldots,a_s,a\in [g]}\omega^{\underline zw}_{a_1\dotsc a_s a}[x_{a_1},\ldots,[x_{a_s},y_b]]. 
\]
\end{definition}

\begin{lemma}\label{lem:prev}
For any $a\in[g]$, one has
\begin{enumerate}[(a)]
	\item $\gamma_a^{(z)}K^{\underline zw}=e^{\ad x_a}(K^{\underline zw})$; 
	\item $\int_{z\in \mathcal A_a}K^{\underline zw}={\ad x_a\over e^{\ad x_a}-1}(y_a) $ whenever $w\in \tilde D$; and
	\item $\mathrm{res}_{z=w}K^{\underline zw}=- \frac{t}{2\pi \sqrt{-1}}$.
\end{enumerate}
\end{lemma}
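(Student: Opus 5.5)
Each of the three claims will be checked term by term on the series defining $K^{\underline zw}$, using only the defining properties of the forms $\omega_{a_1\dotsc a_s a}$ in \eqref{eq:residue} and Lemma \ref{lem:integral}. I read the innermost generator in the definition as $y_a$, where $a$ is the last summation index.

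For (a), the recursion \eqref{eq:residue} gives
\[
\gamma_a^{(z)}\omega_{a_1\dotsc a_s b}=\sum_{0\leq k\leq s}\frac{1}{k!}\,\delta_{aa_1\dotsc a_k}\,\omega_{a_{k+1}\dotsc a_s b}.
\]
(The $k=s+1$ term is excluded because the sum in \eqref{eq:residue} stops at $k<s+1$.) Substitute this into $K^{\underline zw}$ and relabel. The factor $\delta_{aa_1\dotsc a_k}$ forces the first $k$ brackets to be $(\ad x_a)^k$, and the remaining indices sum to $K^{\underline zw}$ again. The total is therefore $\sum_k\frac{1}{k!}(\ad x_a)^k K^{\underline zw}=e^{\ad x_a}K^{\underline zw}$. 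The only point needing care is checking that the relabelling is a bijection onto the index set of $K$; this is the same computation as Lemma \ref{lem:beta}(1).

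For (b), integrate term by term using Lemma \ref{lem:integral}:
\[
\int_{\mathcal A_a}\omega_{a_1\dotsc a_s b}=\delta_{aa_1\dotsc a_s b}\,\mathbf b_{s+1}.
\]
The surviving terms are then $\sum_s \mathbf b_{s+1}(\ad x_a)^s(y_a)$, which equals $\frac{\ad x_a}{e^{\ad x_a}-1}(y_a)$ by the definition of the $\mathbf b_s$. Lemma \ref{lem:integral} is stated for the fundamental domain $\mathcal D$, so I first check that it holds for an arbitrary fundamental domain $\tilde D$ with $w\in\tilde D$. This is the same statement after translating by a deck transformation.

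For (c), only $s=1$ contributes, because by \eqref{eq:residue} the residue of $\omega_{a_1\dotsc a_s}$ is nonzero only when the total index length is $2$. In that case $\mathrm{res}\,\omega_{a_1 a}=-\frac{\delta_{a_1a}}{2\pi\sqrt{-1}}$, so the residue is $-\frac{1}{2\pi\sqrt{-1}}\sum_a[x_a,y_a]=-\frac{t}{2\pi\sqrt{-1}}$.

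I expect no real obstacle. The most delicate step is the index bookkeeping in (a), which I would verify degree by degree in $\ad x$.
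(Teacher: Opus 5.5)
Your term-by-term computations for (a), (b) and (c) are the same as the paper's proof, and your index $\mathbf b_{s+1}$ in (b) is the correct one. The step that fails is the preliminary claim in (b) that Lemma~\ref{lem:integral} extends to an arbitrary fundamental domain $\tilde D\ni w$ because it is ``the same statement after translating by a deck transformation''.

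A deck transformation moves the loop and the point $w$ together. Your argument would therefore need the forms $\omega_{a_1\dotsc a_s}$ to be invariant under the diagonal action of $F_g$ on $\tilde C^2$, and they are not. Lemma~\ref{lem:lem9} with $s=0$ gives $\gamma_a^{(w)}\omega^{\underline zw}_{bb}=\omega^{\underline zw}_{bb}-\omega^{\underline zw}_a$ for every $b$. On the other hand, \eqref{eq:residue} gives $\gamma_a^{(z)}\omega^{\underline zw}_{bb}=\omega^{\underline zw}_{bb}+\delta_{ab}\omega^{\underline zw}_b$. Hence, for $g\geq 2$, $b\neq a$ and $w\in\mathrm{Int}(\mathcal D)$,
\[
\int_{z\in\gamma_a^{-1}(\mathcal A_a)}\omega^{\underline z\,\gamma_a^{-1}(w)}_{bb}
=\int_{z\in\mathcal A_a}\gamma_a^{(z)}\gamma_a^{(w)}\omega^{\underline zw}_{bb}
=\int_{z\in\mathcal A_a}\big(\omega^{\underline zw}_{bb}-\omega^{\underline zw}_a\big)=-1,
\]
whereas the translated Lemma~\ref{lem:integral} would predict $\delta_{abb}\mathbf b_2=0$. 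The integral of $\omega^{\underline zw}_{bb}$ is exactly the coefficient of $[x_b,y_b]$ in the integral of $K^{\underline zw}$. So (b) itself is false on the domain $\gamma_a^{-1}(\mathcal D)$ with the translated loop $\gamma_a^{-1}(\mathcal A_a)$.

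Drop this step rather than try to repair it. In (b), $\tilde D$ has to be the domain with respect to which the $\omega$'s are normalised, i.e.\ $\mathcal D$ as in Lemma~\ref{lem:integral}. The paper's proof tacitly reads it this way and simply cites Lemma~\ref{lem:integral}. With that reading your argument is complete and coincides with the paper's.
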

\noindent Proof.
(a) One has 
\begin{align*}
\gamma_a^{(z)}K^{\underline zw} &= \sum_{s\geq0}\sum_{a_1,\dotsc ,a_s,b\in [g]}\gamma_a^{(z)}\omega^{\underline zw}_{a_1\dotsc a_sb}[x_{a_1},\ldots,[x_{a_s},y_b]]\\
	&= \sum_{s\geq 0} \sum_{a_1,\dotsc ,a_s,b\in [g]}\sum_{k=0}^s \frac{1}{k!} \delta_{aa_1\dotsc a_k} \omega^{\underline zw}_{a_{k+1}\dotsc a_sb}[x_{a_1},\ldots,[x_{a_s},y_b]]\\
	&= \sum_{k,l\geq0}\sum_{a_1,\dotsc ,a_l,b\in [g]} \frac{1}{k!} (\ad x_a)^k \omega^{\underline zw}_{a_{1}\dotsc a_lb}[x_{a_1},\ldots,[x_{a_l},y_b]]\\
	&= e^{\ad x_a}(K^{\underline zw})
\end{align*}
where the second equality follows from the definition of $\omega^{\underline zw}_{a_{1}\dotsc a_sb}$. This proves (a). 

\noindent (b) One has 
\begin{align*}
	\int_{z\in \mathcal A_a}K^{\underline zw} &=\sum_{s\geq0}\sum_{a_1,\dotsc ,a_s,a\in [g]}(\int_{z\in \mathcal A_a}\omega^{\underline zw}_{a_1\dotsc a_sa})[x_{a_1},\ldots,[x_{a_s},y_a]]\\
	&= \sum_{s\geq0}\sum_{a_1,\dotsc ,a_s,a\in [g]}b_s\delta_{aa_1\dotsc a_s}[x_{a_1},\ldots,[x_{a_s},y_a]]\\
	&= \sum_{s\geq0}b_s[x_{a},\ldots,[x_{a},y_a]]\\
	&= \frac{\ad x_a}{e^{\ad x_a} - 1}(y_a). 
\end{align*}
where the second equality follows from Lemma \ref{lem:integral}. This proves (b). 

\noindent (c) One has 
\begin{align*}
	\mathrm{res}_{z=w}K^{\underline zw} &=\sum_{s\geq0}\sum_{a_1,\dotsc ,a_s,a\in [g]}\mathrm{res}_{z=w}\omega^{\underline zw}_{a_1\dotsc a_sa}[x_{a_1},\ldots,[x_{a_s},y_a]]\\
	&= \sum_{s\geq0}\sum_{a_1,\dotsc ,a_s,a\in [g]} -\frac{1}{2\pi \sqrt{-1}}\delta_{s1}\delta_{a_1a}[x_{a_1},\ldots,[x_{a_s},y_a]]\\
	&= -\frac{1}{2\pi \sqrt{-1}}\sum_a[x_a,y_a],  
\end{align*}
where the second equality follows from the definition of $\omega^{\underline zw}_{a_{1}\dotsc a_sb}$. This proves (c). \qed\\

Let $\mathbb L(x_1,\dotsc ,y_g)[1]$ be the part of $\mathbb L(x_1,\dotsc ,y_g)$ of $y$-degree 1, where the $y$-degrees of $x_a,y_a$ are $0,1$. For $w\in\tilde C$, let $K_w = K_w^{\underline z}$ be the 1-form on $\tilde C$ obtained from $K^{\underline zw}$ by pull-back by $\tilde C\to \tilde C^2\colon z\mapsto (z,w)$.

\begin{lemma}\label{lem:uniq}
Let $w\in\tilde D$. Then, for any $u\in \mathbb L(x_1,\dotsc ,y_g)[1]$ of total degree $\geq2$, there is a 
unique $\mathbb L(x_1,\dotsc ,y_g)[1]$-valued differential $L_u^{\underline z}$ on $\tilde C$, 
meromorphic with only pole in $\tilde D$ simple and located at $w$, such that 
\begin{align}\label{eq:conds}
	\gamma_a^{(z)}L_u^{\underline z}=e^{\ad x_a}(L_u^{\underline z}),\quad \mathrm{res}_{z=w}L_u^{\underline z}=-\frac{u}{2\pi \sqrt{-1}}. 
\end{align}
\end{lemma}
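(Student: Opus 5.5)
The plan is to treat existence and uniqueness separately. Existence comes from an explicit formula modelled on $K^{\underline zw}$. Uniqueness comes from a residue-theorem argument on $\tilde D$ combined with a freeness property of $\mathbb L(x_1,\dotsc,y_g)[1]$. No normalisation of $A$-periods is imposed or needed.

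\emph{Algebraic input.} As a module over the associative algebra generated by the $\ad x_a$, the $y$-degree one part $\mathbb L(x_1,\dotsc,y_g)[1]$ is free on $y_1,\dotsc,y_g$. That is, it is isomorphic to $T(x_1,\dotsc,x_g)\otimes\mathrm{span}(y_b)$, with $\ad x_a$ acting by left multiplication by $x_a$. Two consequences follow.
\begin{enumerate}[(i)]
\item Every $u$ of total degree $\geq 2$ can be written as $u=\sum_a[x_a,v_a]$ with $v_a\in\mathbb L(x_1,\dotsc,y_g)[1]$.
\item If $\sum_a [x_a,c_a]=0$ with $c_a\in\mathbb L(x_1,\dotsc,y_g)[1]$, then all $c_a=0$.
\end{enumerate}

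\emph{Existence.} With $u=\sum_a[x_a,v_a]$ as in (i), I would set
\[
L_u^{\underline z}=\sum_{s\geq0}\sum_{a_1,\dotsc,a_s,a\in[g]}\omega^{\underline zw}_{a_1\dotsc a_s a}\,\ad x_{a_1}\cdots\ad x_{a_s}(v_a).
\]
The equivariance $\gamma_a^{(z)}L_u^{\underline z}=e^{\ad x_a}(L_u^{\underline z})$ follows verbatim from the computation in Lemma \ref{lem:prev} (a). That computation uses only the recursion \eqref{eq:residue}, which is linear in the last argument; it never uses that the last argument is $y_b$. For the residue, by \eqref{eq:residue} only the terms with $s=1$ and $a_1=a$ contribute, giving
\[
\mathrm{res}_{z=w}L_u^{\underline z}=-\frac{1}{2\pi\sqrt{-1}}\sum_a[x_a,v_a]=-\frac{u}{2\pi\sqrt{-1}}.
\]
The forms $\omega_{\dotsc}$ have poles only along $\varpi(z)=\varpi(w)$. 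Inside $\tilde D$ this means only at $w$, and the poles are simple.

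\emph{Uniqueness.} Let $P$ be the difference of two solutions. Then $P$ is holomorphic, $\mathbb L(x_1,\dotsc,y_g)[1]$-valued, and satisfies $\gamma_a^{(z)}P=e^{\ad x_a}P$. Suppose $P\neq0$, grade by $x$-degree, and let $d$ be minimal with $P_d\neq 0$.
\begin{enumerate}[1.]
\item The lowest part $P_d$ is $F_g$-invariant, so it descends to $C$. Hence $P_d=\sum_b\omega_b\,c_b$ with constants $c_b=\int_{\mathcal A_b}P_d$.
\item The next part satisfies $\gamma_a^{(z)}P_{d+1}-P_{d+1}=[x_a,P_d]$.
\item Since $P_{d+1}$ is holomorphic on $\tilde D$, its integral over $\partial\tilde D=\bigcup_a\mathcal A_a\cup\gamma_a^{-1}(\mathcal A_a)$ vanishes. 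The two boundary pieces are paired by $\gamma_a$ with opposite orientations, so this integral equals $\pm\sum_a\int_{\mathcal A_a}(\gamma_a^{(z)}P_{d+1}-P_{d+1})$. By step 2 this is $\pm\sum_a[x_a,c_a]$.
\item Therefore $\sum_a[x_a,c_a]=0$, so all $c_a=0$ by (ii). Then $P_d=0$, contradicting the choice of $d$. Hence $P=0$.
\end{enumerate}

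\emph{Main obstacle.} I expect step 3 to be the delicate point. One must fix the orientation and sign conventions correctly when matching $\gamma_a^{-1}(\mathcal A_a)$ with $\mathcal A_a$. One must also check that no further boundary terms arise. The sign itself is irrelevant, since only the vanishing of $\sum_a[x_a,c_a]$ is used. The freeness claim behind (i) and (ii) is standard, via the embedding of $\mathbb L$ into the tensor algebra.
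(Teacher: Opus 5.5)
Your proof is correct. The existence step is the paper's construction. The uniqueness step reaches the conclusion by a different organisation of the same ingredients.

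\textbf{Existence.} Your form is exactly $\theta_u(K^{\underline z}_w)$, where $\theta_u$ fixes every $x_a$ and sends $y_a\mapsto v_a$. Your normalisation $u=\sum_a[x_a,v_a]$ is the one that actually gives residue $-u/2\pi\sqrt{-1}$. The paper's text instead takes $(u_a)$ to be the preimage of $u$ under $(v_a)\mapsto\sum_a(e^{\ad x_a}-1)(v_a)$ and sets $\theta_u(y_a)=u_a$. This gives $\theta_u(t)=\sum_a[x_a,u_a]$, which in general differs from $u$. For consistency with the rest of its argument one needs $\theta_u(y_a)=\frac{e^{\ad x_a}-1}{\ad x_a}(u_a)$ there.

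\textbf{Uniqueness.} The paper applies the residue theorem on $\tilde D$ to an arbitrary solution $L_u^{\underline z}$. This gives $\sum_a(e^{\ad x_a}-1)(I_a)=u$ for its $A$-periods $I_a$. Bijectivity of that exponential map then pins down the $I_a$. Hence the difference with the explicit solution has zero $A$-periods and zero residue, and its lowest-degree part is an invariant holomorphic differential with vanishing $A$-periods, so it is zero.

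You never compute any periods. You take the difference $P$ of two solutions and write its lowest part as $\sum_b\omega_b c_b$. You then apply the boundary integral only to the next component, where the automorphy linearises to $\gamma_a^{(z)}P_{d+1}-P_{d+1}=[x_a,P_d]$.

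\textbf{What each route buys.} You need only injectivity of $(c_a)\mapsto\sum_a[x_a,c_a]$ on $\mathbb L(x_1,\dotsc,y_g)[1]^{\oplus g}$. You need neither bijectivity of the exponential map nor the $A$-periods of the explicit solution, so your route is shorter and more self-contained. The paper's route gives, as a by-product, the $A$-periods of any solution: they form the preimage of $u$ under $(v_a)\mapsto\sum_a(e^{\ad x_a}-1)(v_a)$.
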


\noindent Proof. Let $\mathbb L(x_1,\dotsc ,y_g)[1]_{\geq2}$ be the part of $\mathbb L(x_1,\dotsc ,y_g)[1]$ of total degree $\geq2$. 
The isomorphism
\begin{align*}
	T(x_1,\dotsc ,x_g)^{\oplus g} &\to\mathbb L(x_1,\dotsc ,y_g)[1]\\
	(t_1,\dotsc ,t_g) &\mapsto \sum_a\ad (t_a)(y_a),
\end{align*}
restricts to another isomorphism $T(x_1,\dotsc ,x_g)_{\geq 1}^{\oplus g}\to \mathbb L(x_1,\dotsc ,y_g)[1]_{\geq2}$. 
One proves that the map 
\begin{align}\label{eq:lie2}
	\mathbb L(x_1,\dotsc ,y_g)[1]^{\oplus g} &\to \mathbb L(x_1,\dotsc ,y_g)[1]_{\geq2}\\
	(v_1,\dotsc ,v_g) &\mapsto \sum_a(e^{\ad x_a}-1)(v_a)\nonumber
\end{align}
is a bijection. In fact, composing \eqref{eq:lie2} with these bijections gives rise to the map
\begin{align*}
	T(x_1,\dotsc ,x_g)^{\oplus g^2}\to T(x_1,\dotsc ,x_g)_{\geq 1}^{\oplus g}\\
	(t_{ab})_{1\leq a,b\leq g} \mapsto \big( \sum_a(e^{x_a}-1)t_{ab}\big )_{1\leq b\leq g}.
\end{align*}
This is the $g$-th Cartesian power of the map  
\begin{align*}
	T(x_1,\dotsc ,x_g)^{\oplus g}\to T(x_1,\dotsc ,x_g)_{\geq 1}\\
	(t_a)_a \mapsto \sum_a(e^{x_a}-1)t_a,
\end{align*} 
which is bijective.

Let $u \in \mathbb{L}(x_1,\dotsc ,y_g)[1]$ be of total degree $\geq 2$, and let us prove the existence of $L_u^{\underline{z}}$ satisfying \eqref{eq:conds}. Denote by $(u_1,\dotsc ,u_g)$ the preimage of $u$ by the map \eqref{eq:lie2}. There is a unique endomorphism $\theta_u$ of $\mathbb L(x_1,\dotsc ,y_g)$ preserving the $y$-degree and such that $\theta_u(x_a) =  x_a$ and $\theta_u(y_a) = u_a$, and it follows that $\theta_u(t) = u$. Therefore, by Lemma \ref{lem:prev}, $\theta_u(K^{\underline z}_w)$ is a solution of \eqref{eq:conds}. 

Let us prove the uniqueness of a solution of \eqref{eq:conds}. If $L_u^{\underline z}$ is such a solution, the integrals $I_a:=\int_{z\in \mathcal A_a}L_u^{\underline z}$ belong to $\mathbb L(x_1,\dotsc ,y_g)[1]$, and the residue formula in Section 4.1 of \cite{enriquez} implies, together with the automorphy properties \eqref{eq:conds}, the relation $\sum_a(e^{\ad x_a}-1)(I_a)=u$. By the above-mentioned bijectivity, this implies $I_a=u_a$ for any $a$. Then $X^{\underline z}:=L_u^{\underline z}-\theta_u(K^{\underline z}_w)$ satisfies 
\begin{align*}
	\gamma_a^{(z)}X^{\underline z} =e^{\ad x_a}(X^{\underline z}),\quad 
\int_{z\in \mathcal A_a}X^{\underline z}= 0,\quad \mathrm{res}_{z=w}X^{\underline z}=0.  
\end{align*}
If $X^{\underline z}$ is nonzero, then its nonzero component $x^{\underline z}$ of lowest degree 
(assigning degree 1 to $x_a,y_a$) satisfies 
\[
	\gamma_a^{(z)}x^{\underline z}=x^{\underline z},\quad 
\int_{z\in \mathcal A_a}x^{\underline z}= 0,\quad \mathrm{res}_{z=w}x^{\underline z}=0.  
\]
The first and last equalities imply that $x^{\underline z}$ is a holomorphic differential, and the second equality then implies 
its vanishing; this is a contradiction, hence $X^{\underline z}=0$, proving the claimed uniqueness. \qed

\begin{definition}
For $a\in[g]$, $\theta_a$ is the automorphism of $\mathbb L(x_1,\dotsc ,y_g)$ defined by 
\[	
	\forall b\in[g]:\quad  \theta_a(x_b)=x_b,\quad \theta_a(y_b)=y_b+\delta_{ab}{{e^{\ad x_a}-1}\over\ad x_a}(t). 
\]
\end{definition}
One has
\[
	\theta_a(t)=\sum_b [x_b,y_b+\delta_{ab}{{e^{\ad x_a}-1}\over{\ad x_a}}(t)]
=t+[x_a,{{e^{\ad x_a}-1}\over{\ad x_a}}(t)]=e^{\ad x_a}(t).
\]
One can check that $\theta_a$ is the unique automorphism of $\mathbb L(x_1,\dotsc ,y_g)$ such that $x_b\mapsto x_b$ for any $b$, and $t\mapsto e^{\ad x_a}(t)$. 

\begin{lemma}\label{lem:last}
For any $a\in[g]$, one has 
\[
	\gamma_a^{(w)}K^{\underline zw}=\theta_a^{-1}(K^{\underline zw}). 
\]
\end{lemma}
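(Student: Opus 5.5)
We will derive the identity from the uniqueness statement of Lemma \ref{lem:uniq}, applied not to $K^{\underline z w}$ itself but to its image under $\theta_a$. Put $L^{\underline z} := \theta_a\big(\gamma_a^{(w)}K^{\underline zw}\big)$, regarded as a differential in $z$ for fixed $w\in\tilde D$. The claim is then equivalent to $L^{\underline z}=K^{\underline z w}$, i.e.\ $L^{\underline z}=\theta_t(K_w^{\underline z})$ with $u=t$. Since $t$ has total degree $2$ and $y$-degree $1$, Lemma \ref{lem:uniq} applies to $u=t$: with $(u_1,\dotsc,u_g)=(y_1,\dotsc,y_g)$ we have $\theta_t=\id$. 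So it suffices to check that $L^{\underline z}$ satisfies the three defining conditions \eqref{eq:conds}: automorphy in $z$, the location and simplicity of its poles in $\tilde D$, and the residue $-\frac{t}{2\pi\sqrt{-1}}$ at $z=w$.

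\textbf{Automorphy.} The deck actions in the $z$- and $w$-variables commute, so Lemma \ref{lem:prev} (a) gives
\[
\gamma_b^{(z)}\gamma_a^{(w)}K^{\underline zw}=\gamma_a^{(w)}e^{\ad x_b}K^{\underline zw}=e^{\ad x_b}\gamma_a^{(w)}K^{\underline zw}.
\]
Since $\theta_a$ fixes every $x_b$, it commutes with $e^{\ad x_b}$. Hence $\gamma_b^{(z)}L^{\underline z}=e^{\ad x_b}L^{\underline z}$. Moreover $\theta_a$ preserves $y$-degree, so $L^{\underline z}$ remains $\mathbb L[1]$-valued.

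\textbf{Poles.} The form $\gamma_a^{(w)}K^{\underline zw}=K^{\underline z\,\gamma_a^{-1}(w)}$ has poles in $z$ only on the $F_g$-orbit of $\gamma_a^{-1}(w)$. Inside $\tilde D$ the only such point is $w$, since the orbit of $w$ meets $\tilde D$ only in $w$ when $w$ lies in the interior. Its residue there is obtained by transporting along $\gamma_a$ using automorphy in $z$. Concretely, $z=w$ corresponds to $\zeta=\gamma_a^{-1}(z)=\gamma_a^{-1}(w)$, and $\gamma_a^{(z)}$ acts by $e^{\ad x_a}$. Lemma \ref{lem:prev} (c) then gives
\[
\mathrm{res}_{z=w}K^{\underline z\,\gamma_a^{-1}(w)}=e^{\ad x_a}\Big(-\frac{t}{2\pi\sqrt{-1}}\Big).
\]
Applying $\theta_a$ and using $\theta_a(e^{\ad x_a}t)=e^{\ad x_a}\theta_a(t)$ would then give the wrong residue. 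The correct order is therefore to apply $\theta_a^{-1}$, which sends $e^{\ad x_a}(t)$ back to $t$ by the computation just before Lemma \ref{lem:last}. So I will instead set $L^{\underline z}:=\theta_a^{-1}\big(\gamma_a^{(w)}K^{\underline zw}\big)$ and compare it with the candidate $\theta_a^{-1}$-twist.

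That residue computation is the main obstacle. More precisely, the difficulty is fixing the correct direction, $\theta_a$ versus $\theta_a^{-1}$, and the sign conventions for $\gamma(\omega)=(\gamma^{-1})^*\omega$. I would first verify these conventions on the lowest-degree term $\omega_{b}$, where the statement reads $\gamma_a^{(w)}\omega_b=\omega_b$ modulo the correction term $\delta_{ab}t$. I would match this against Lemma \ref{lem:lem9} in low degree, and then fix the statement accordingly.

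\textbf{Conclusion.} Once the residue of $\theta^{\pm1}_a(\gamma_a^{(w)}K)$ equals $-\frac{t}{2\pi\sqrt{-1}}$ and automorphy holds, Lemma \ref{lem:uniq} with $u=t$ identifies it with $K^{\underline zw}$. This yields $\gamma_a^{(w)}K^{\underline zw}=\theta_a^{-1}(K^{\underline zw})$. The proof uses the uniqueness part of Lemma \ref{lem:uniq}, which relies on vanishing of holomorphic differentials with zero $A$-periods. Here that input enters only through the residue-sum relation determining the $A$-integrals.
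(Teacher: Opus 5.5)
There is a genuine gap, at the step you flag yourself. The residue is computed with the wrong exponent, and the resulting switch to $\theta_a^{-1}$ breaks the argument.

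Write $w'=\gamma_a^{-1}(w)$, so that $\gamma_a^{(w)}K^{\underline zw}=K^{\underline z}_{w'}$. The pole at $z=w=\gamma_a(w')$ is obtained from the one at $w'$ by pulling back along $\gamma_a$. But $\gamma_a^*=(\gamma_a^{-1})^{(z)}$ acts by $e^{-\ad x_a}$, not $e^{\ad x_a}$. Taking residues at $z=w'$ in $\gamma_a^*K_{w'}=e^{-\ad x_a}K_{w'}$ gives
\[
\mathrm{res}_{z=w}K^{\underline z}_{\gamma_a^{-1}(w)}=-\frac{e^{-\ad x_a}(t)}{2\pi\sqrt{-1}}.
\]
This is the mirror image of the paper's formula $\mathrm{res}_{\gamma_a^{-1}(w)}K_w=-(2\pi\sqrt{-1})^{-1}e^{\ad x_a}(t)$. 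So your first choice $L^{\underline z}=\theta_a(\gamma_a^{(w)}K^{\underline zw})$ was the right one. Indeed $\theta_a(e^{-\ad x_a}(t))=e^{-\ad x_a}(\theta_a(t))=t$, so $L^{\underline z}$ has residue $-t/(2\pi\sqrt{-1})$ at $w$. Uniqueness then gives $L^{\underline z}=K^{\underline z}_w$, which is exactly the statement.

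As written, however, the proposal redefines $L^{\underline z}:=\theta_a^{-1}(\gamma_a^{(w)}K^{\underline zw})$. Identifying this with $K^{\underline zw}$ would give $\gamma_a^{(w)}K^{\underline zw}=\theta_a(K^{\underline zw})$, so your Conclusion does not follow from your own setup. That identity is also false. In degree $2$, $\theta_a^{\pm1}(K^{\underline zw})$ contains $\pm\omega^{\underline zw}_a t$, coming from $\theta_a^{\pm1}(y_a)=y_a\pm t+\cdots$. Lemma \ref{lem:lem9} with $s=0$ (proved independently in Proof 1) gives $\gamma_a^{(w)}\omega^{\underline zw}_{cc}=\omega^{\underline zw}_{cc}-\omega^{\underline zw}_a$, which forces the minus sign. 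Deferring the direction to a later check and ``fixing the statement accordingly'' is precisely the missing step.

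Once the residue is corrected, the rest of your argument is the paper's proof up to the choice of base point. That rest is: automorphy from commuting deck actions and $\theta_a$ fixing every $x_b$; $y$-degree preservation; poles in $\tilde D$ only at the interior point $w$; then analytic continuation in $w$. The paper compares $\theta_a(K_{\gamma_a^{-1}(w)})$ and $K_w$ at the pole $\gamma_a^{-1}(w)\in\tilde D$, i.e.\ applies Lemma \ref{lem:uniq} with $u=e^{\ad x_a}(t)$. You instead work at $w\in\tilde D$ with $u=t$.

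You also do not need $\theta_t$, and $(y_1,\dotsc,y_g)$ is not literally the preimage of $t$ under \eqref{eq:lie2}. It suffices that $K^{\underline z}_w$ satisfies \eqref{eq:conds} with $u=t$ by Lemma \ref{lem:prev} (a), (c), since only the uniqueness half of Lemma \ref{lem:uniq} is used.
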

\noindent Proof.
We prove the pull-back of this equality by the maps $\tilde C\to\tilde C^2\colon z\mapsto (z,w)$ 
for $w\in\gamma_a(\tilde D)$, which implies the equality by analytic prolongation. The desired equality is then equivalent to 
\[
	\theta_a(K_{\gamma_a^{-1}(w)})=K_w, 
\]
which we now prove. Both sides of this equality are 1-forms on $\tilde C$, 
they are $\mathbb L(x_1,\dotsc ,y_g)[1]$-valued as $\theta_a$ preserves the $y$-degree, 
and they both satisfy the first equality of Lemma \ref{lem:uniq} as $\theta_a$ leaves $x_1,\dotsc ,x_g$ invariant, and $\gamma_a^{(z)}$ commutes with $\gamma_b^{(w)}$. Also, 
\begin{align*}
	\mathrm{res}_{\gamma_a^{-1}(w)}(\theta_a(K_{\gamma_a^{-1}(w)})) &= \theta_a(\mathrm{res}_{\gamma_a^{-1}(w)}K_{\gamma_a^{-1}(w)})\\
	&=-\frac{1}{2\pi \sqrt{-1}}\theta_a(t)\\
	&=-\frac{1}{2\pi \sqrt{-1}}e^{\ad x_a}(t)\\
	&=\mathrm{res}_{\gamma_a^{-1}(w)}(K_w),
\end{align*}
where the last equality follows from 
\[
	\mathrm{res}_{\gamma_a^{-1}(w)}(K_w)=\mathrm{res}_{w}(K_w\circ \gamma_a^{-1}) =\mathrm{res}_{w}e^{\ad x_a}(K_w)=-(2\pi \sqrt{-1})^{-1}e^{\ad x_a}(t). 
\]
The statement then follows from Lemma \ref{lem:uniq}. \qed\\

\noindent\textbf{Proof of Lemma \ref{lem:lem9}.}
One has $\theta_a^{-1}(y_b)= y_b + \delta_{ab}{{e^{-\ad x_a}-1}\over\ad x_a}(t)$, therefore 
\begin{align*}
	&\sum_{s\geq0}\sum_{a_1,\dotsc ,a_s,b\in [g]}\gamma_a^{(w)}\omega^{\underline zw}_{a_1\dotsc a_sa}[x_{a_1},\ldots,[x_{a_s},y_b]]\\
	&=\gamma_a^{(w)}K^{\underline zw} =\theta_a^{-1}(K^{\underline zw})\\
	&= \sum_{s\geq0}\sum_{a_1,\dotsc ,a_s,b\in [g]}\omega^{\underline zw}_{a_1\dotsc a_sa}\theta_a^{-1}([x_{a_1},\ldots,[x_{a_s},y_b]])\\
	&= \sum_{s\geq0}\sum_{a_1,\dotsc ,a_s,b\in [g]}\omega^{\underline zw}_{a_1\dotsc a_s,b}[x_{a_1},\ldots,[x_{a_s},y_b+\delta_{ab}{{e^{-\ad x_a}-1}\over\ad x_a}(t)]]\\
	&= \sum_{s\geq0}\sum_{a_1,\dotsc ,a_s,b\in [g]}\omega^{\underline zw}_{a_1\dotsc a_s,b}[x_{a_1},\ldots,[x_{a_s},y_b]]\\
	&\qquad +\sum_{s\geq0}\sum_{a_1,\dotsc ,a_s,c\in [g]}\omega^{\underline zw}_{a_1\dotsc a_s}[x_{a_1},\ldots,[x_{a_s},{{e^{-\ad x_a}-1}\over\ad x_a}[x_c,y_c]]]
\end{align*}
where the second equality follows from Lemma \ref{lem:last}; this implies the claim (as well as the $F_g$-invariance part of Lemma 9 (a) in \cite{enriquez}). \qed\\

\small
\bibliographystyle{alphaurl}
\bibliography{dhgKZB_arxiv.bib}

\end{document}